\documentclass[fleqn,11pt]{amsart}
\usepackage[utf8]{inputenc}

\usepackage{graphicx}
\usepackage{float} 
\usepackage{amsmath,amssymb}
\usepackage{appendix}
\usepackage[hidelinks]{hyperref}
\usepackage{url}
\usepackage{comment,xcolor}
\usepackage[lined,boxed,linesnumbered,ruled]{algorithm2e}
\usepackage{bm}
\usepackage{stmaryrd}
\usepackage{graphicx}
\usepackage{subcaption}

\usepackage{setspace}
\usepackage{enumitem}
\setlist{nolistsep}

\usepackage{natbib}
\usepackage{tikz}

\newcommand{\avg}[1]{\{\!\{#1\}\!\}}
\newcommand{\omgr}{\Omega_{\mathrm{ref}}}
\newcommand{\domgr}{\partial\Omega_{\mathrm{ref}}}
\newcommand{\QN}{\mathbb Q_N(\omgr)}

\newtheorem{theorem}{Theorem}[section]

\newtheorem{lemma}[theorem]{Lemma}
\newtheorem{remark}{Remark}[section]
\newtheorem{definition}{Definition}[section]
\usepackage[a4paper, margin=1in]{geometry}

\title[ES-OEDG]{
An Entropy-Stable Oscillation-Eliminating DGSEM for the Euler Equations on Curvilinear Meshes}

 \author{Jieling Yang$^1$}
 \author{Guosheng Fu$^1$}
  \address{$^1$ Department of Applied and Computational Mathematics and Statistics (ACMS), University of Notre Dame, Notre Dame, IN 46556}
 \email{jyang38@nd.edu, gfu@nd.edu}
 \thanks{This work is supported in part by NSF DMS-2410740.}

\begin{document}

\maketitle

%%%%%%%%%%%%%%%%%%%%%%%%%%%%%%%%%%%%%%%%%%%%%%%%%%%%
%%%%%%%%%%%%%%%%%%%%%%%%%%%%%%%%%%%%%%%%%%%%%%%%%%%%
%%%%%%%%%%%%%%%%%%%%%%%%%%%%%%%%%%%%%%%%%%%%%%%%%%%%
%%%%%%%%%%%%%%%%%%%%%%%%%%%%%%%%%%%%%%%%%%%%%%%%%%%%
\begin{abstract}
%This paper develop a entropy stable high-order approximation of compressible Euler equations with oscillation-eliminating technical for general curvilinear meshes in two dimensions. The methodology is based on nodal discontinuous Galerkin method with summation-by-parts (SBP) property. The semi-discrete entropy conservation property is proved based on the SBP property and discrete metric identities for curvilinear meshes, and using the entropy stable numerical surface flux, the global entropy stable can be proved. Besides the entropy stable, we use and modify the very recent oscillation-eliminating discontinuous Galerkin(OEDG) method to control the nonphysical oscillations near strong discontinuities. We note that the zero order damping coefficient from the original OEDG method can play a role as shock indicator, and by which we can reduce the computation cost of OEDG method. The original OEDG method only applied to triangular mesh, and is hard to direct extend to general curvilinear meshes, because of original OEDG method is based on the local orthogonal modal basis which is hard to define on general curvilinear meshes. We use the projection operator to get over such point, and extend the OEDG to curvilinear meshes. Extensive numerical results are presented and shows the effectiveness and advantages of the entropy stable OEDG methods in general curvilinear meshes.
We develop an entropy-stable high-order numerical method for the
two-dimensional compressible Euler equations on general curvilinear meshes.
The proposed approach is based on a nodal discontinuous Galerkin spectral
element method (DGSEM) that satisfies the summation-by-parts (SBP) property.
At the semidiscrete level, entropy stability is established through the
SBP structure and the discrete metric identities associated with curvilinear
coordinate mappings.
By incorporating entropy-stable numerical fluxes at element interfaces, a
global discrete entropy inequality is obtained.
To further control nonphysical oscillations near strong discontinuities, the
entropy-stable DG formulation is combined with a modified
oscillation-eliminating discontinuous Galerkin (OEDG) method, which was originally proposed in \cite{peng2025oedg}.
We observe that the zero-order damping coefficient in the original OEDG method
naturally serves as an effective shock indicator, which enables localization
of the oscillation control mechanism and significantly reduces computational
cost.
Moreover, while the original OEDG formulation relies on local orthogonal modal
bases and is primarily restricted to simplicial meshes, we reformulate the OE
procedure using projection operators, allowing for a systematic extension to
general curvilinear meshes.
The resulting method preserves conservation and entropy stability while
effectively suppressing spurious oscillations.
A series of challenging numerical experiments is presented to demonstrate the
accuracy, robustness, and effectiveness of the proposed entropy-stable OEDG
method on both Cartesian and curvilinear meshes.

\end{abstract}

\vspace{0.05cm}
Keywords: {\em
  Euler equation,
  discontinuous Galerkin method,
  curvilinear mesh,
  entropy stable,
  oscillation eliminating,
  shock indicator
}

%%%%%%%%%%%%%%%%%%%%%%%%%%%%%%%%%%%%%%%%%%%%%%%%%%%%
%%%%%%%%%%%%%%%%%%%%%%%%%%%%%%%%%%%%%%%%%%%%%%%%%%%%
%%%%%%%%%%%%%%%%%%%%%%%%%%%%%%%%%%%%%%%%%%%%%%%%%%%%
%%%%%%%%%%%%%%%%%%%%%%%%%%%%%%%%%%%%%%%%%%%%%%%%%%%%
\section{Introduction}

%The compressible Euler equations play a fundamental role in fluid dynamics and serve as a core mathematical model for a wide range of applications, including aerodynamics, astrophysical flows, gas dynamics, and high-speed compressible flows. As a nonlinear hyperbolic system of conservation laws, the Euler equations admit complex solution structures such as shock waves, contact discontinuities, and rarefaction waves, even when starting from smooth initial data. Accurately and robustly resolving these multiscale and strongly nonlinear phenomena remains a central challenge in computational fluid dynamics.

%The design of numerical methods to approximate Euler equation is driven by the need for stable, accurate and robust scheme. In terms of accuracy, among numerous numerical methods, discontinuous Galerkin (DG) methods are naturally well-suited to the high order scheme and complex geometries. As a result, DG methods have become increasingly popular for the numerical approximation of hyperbolic conservation laws, including the compressible Euler equations\cite{cockburn1989tvb}. Besides accuracy, stability and robustness are also crucial for a scheme. For example preserving the entropy and positive properties of Euler equation are essentially important for a successful scheme of Euler equation. Controlling unphysical numerical oscillation is also important but challenging.

The compressible Euler equations play a fundamental role in fluid dynamics and serve as a core mathematical model for a wide range of applications, including aerodynamics, astrophysical flows, gas dynamics, and high-speed compressible flows. As a nonlinear hyperbolic system of conservation laws, the Euler equations admit complex solution structures such as shock waves, contact
discontinuities, and rarefaction waves, even when evolving from smooth initial data. Accurately and robustly resolving these multiscale and strongly nonlinear phenomena remains a central challenge in computational fluid dynamics \cite{abgrall_shu_2016_hyperbolic, abgrall_shu_2017_hyperbolic_applied}.

The development of numerical methods for the Euler equations is driven by the simultaneous requirements of accuracy, stability, and robustness. Among the many numerical approaches that have been proposed, discontinuous
Galerkin (DG) methods are particularly attractive due to their ability to achieve high-order accuracy on complex geometries while maintaining local conservation.
Consequently, DG methods have become an increasingly popular choice for the approximation of hyperbolic conservation laws, including the compressible
Euler equations \cite{cockburn1989tvb}. Beyond accuracy, stability and robustness are essential for reliable
simulations. In particular, preserving fundamental physical properties of the Euler equations, such as entropy stability and positivity of density and pressure,
is crucial for the success of a numerical scheme.
In addition, the control of nonphysical numerical oscillations near discontinuities remains an important and challenging issue for high-order methods.

The entropy inequality is a fundamental physical property of the Euler equations and serves as an admissibility criterion for physically relevant solutions. It is well known that shock waves and contact discontinuities may develop during the evolution of the Euler equations, even when the initial and boundary data are smooth. As a consequence, solutions are generally understood in the weak
(distributional) sense. However, weak solutions are not unique, and additional admissibility conditions are required to single out the physically meaningful solution.
This role is played by entropy conditions, which are consistent with the second law of thermodynamics.
From a numerical perspective, it is therefore essential to design schemes that preserve entropy properties at the discrete level. The pioneering work of Tadmor \cite{tadmor1987numerical} introduced the concept of entropy-conservative and entropy-stable schemes for systems of conservation laws, leading to the construction of first-order entropy-stable finite volume methods based on specially designed entropy-conservative numerical fluxes. Subsequent extensions include second-order schemes \cite{tadmor2003entropy} and arbitrarily high-order methods \cite{fjordholm2012arbitrarily}, relying on
high-order entropy-conservative flux constructions
\cite{lefloch2002fully}. These developments laid the theoretical foundation for entropy-stable
high-order discretizations.

In the context of discontinuous Galerkin (DG) methods, early progress on entropy stability was made by Jiang and Shu \cite{jiang1994cell}, who established a discrete entropy inequality for scalar conservation laws using the square entropy. This analysis was later extended to linear symmetric hyperbolic systems by Hou and LeFloch \cite{hou2007solutions}. Despite these foundational contributions, entropy-stable DG formulations remained limited in both scope and applicability for many years.
Substantial advances have been achieved more recently through the work of Carpenter et al. \cite{Carpenter14,fisher2013high}, who exploited the summation-by-parts (SBP) property of nodal DG discretizations based on Gauss–Legendre–Lobatto (GLL) points. These methods are commonly referred to as the DG spectral element method with Gauss–Legendre–Lobatto points (DGSEM–GLL) \cite{del2018simultaneous,zhang2023discontinuous}. Building on this framework, Chen and Shu \cite{chen2017entropy,chen2020review} developed a unified approach for constructing high-order entropy-stable DG methods for general systems on unstructured triangular meshes using SBP operators.
Further developments and refinements of entropy-stable DGSEM formulations have been reported in \cite{crean2018entropy,chan2018discretely,chan2019efficient}. Entropy-stable DGSEM–GLL methods have since been successfully applied to a broad class of systems, including the Euler and Navier–Stokes equations \cite{crean2018entropy}, shallow water equations \cite{gassner2016well,wintermeyer2017entropy}, magnetohydrodynamics (MHD) \cite{bohm2020entropy,liu2025entropy}, and multiphase or multicomponent flows \cite{renac2019entropy}. Finally, it is worth noting that DGSEM–GLL methods are closely related to summation-by-parts finite difference operators coupled with simultaneous approximation terms (SAT) \cite{gassner2016split,gassner2013skew}.

The construction of entropy-stable schemes on curvilinear meshes poses additional challenges. In this setting, entropy stability must be achieved simultaneously with the
preservation of discrete geometric conservation laws (GCLs), which is essential for maintaining free-stream preservation and nonlinear stability. Important progress in this direction has been reported in
\cite{fisher2012high,carpenter2016entropy,crean2018entropy,chan2019discretely,bohm2020entropy}, where careful discretization of geometric terms is required to ensure
compatibility between entropy stability and the discrete metric identities \cite{Kopriva2006}.

Despite these advances, entropy stability alone is generally insufficient to eliminate nonphysical oscillations near strong discontinuities. Such oscillations are closely related to the classical Gibbs phenomenon \cite{gottlieb1997gibbs} and are particularly pronounced in high-order numerical schemes, including discontinuous Galerkin methods. If left uncontrolled, Gibbs-type oscillations may severely degrade solution
quality, compromise robustness, or even lead to numerical breakdown. Consequently, additional nonlinear stabilization mechanisms are indispensable
in practical computations for hyperbolic conservation laws.
A wide range of shock-capturing techniques has been developed to suppress nonphysical oscillations.
Limiter-based approaches, such as total variation diminishing (TVD) and total variation bounded (TVB) limiters \cite{cockburn1989tvb,burbeau2001problem}, as
well as ENO and weighted ENO (WENO) limiters
\cite{qiu2005runge,zhong2013simple,zhu2008runge}, modify local solution reconstructions in troubled cells to enforce monotonicity or boundedness. Another widely used strategy is the introduction of artificial viscosity(AV),
which adds localized dissipation in regions with strong gradients \cite{jameson1981numerical,jameson1993artificial,persson2006sub,barter2010shock}. 
%While these methods are effective in many applications, they often introduce excessive numerical diffusion, reduce accuracy in smooth regions, or require problem-dependent tuning.

More recently, the oscillation-free DG (OFDG) framework has been systematically developed by Lu, Liu, and Shu
\cite{lu2021oscillation,liu2022essentially}. In OFDG, damping terms are incorporated directly into the DG formulation to suppress spurious oscillations.
This approach has demonstrated strong robustness and has been extended to a variety of systems, including chemically reacting flows\cite{du2023oscillation,zuo2024positivity}, shallow water equations \cite{liu2022oscillation}, and magnetohydrodynamics \cite{liu2024entropy}.
However, the additional damping terms render the resulting semidiscrete system highly stiff, leading to severe time-step restrictions in explicit time integration.
To alleviate this limitation, Peng et al.\ proposed the
oscillation-eliminating DG (OEDG) method in \cite{peng2025oedg}. The OEDG method follows a similar damping-based philosophy as OFDG, but formulates the oscillation control mechanism as a local pseudo-time evolution problem that can be solved exactly.
As a result, the stiffness associated with the damping terms is removed, and the stringent time-step constraints of OFDG are largely mitigated. Since its introduction, OEDG has been successfully applied to a variety of
problems, including systems with complex wave interactions
\cite{liu2025structure,ding2025robust,yan2024uniformly}.

Despite its advantages, the original OEDG formulation still faces several practical challenges. In particular, the computation of damping coefficients involves the evaluation of high-order spatial derivatives in multiple directions, which can be computationally expensive.
Moreover, the direct extension of OEDG to curvilinear meshes is not straightforward due to the lack of tensor-product structure and the presence of geometric complexities.
In this work, we address these limitations and further develop the OEDG framework. First, we observe that the damping coefficients appearing in the OE procedure naturally serve as effective shock indicators, closely related to the classical KXRCF indicator \cite{krivodonova2004shock}. By exploiting this connection, the OE procedure can be selectively applied
only in troubled cells, significantly reducing computational cost. Second, we extend the OEDG methodology to curvilinear meshes in a systematic and efficient manner. 
%Finally, the proposed OEDG procedure is combined with an entropy-stable DG formulation, yielding a robust high-order method that simultaneously enforces entropy stability and effectively suppresses nonphysical oscillations.

%The main contribution of this paper is the development and modification of OEDG method: generalizing it to the case of curvilinear meshes, reducing the computation cost by a new shock indicator; combining the modified OEDG with entropy stable DG methods in the framework of curvilinear meshes. Finally an entropy-stable nodal DG with oscillation-eliminating procedure is proposed for general curvilinear meshes, and some challenging numerical simulations are shown.

%The remainder of the paper is organized as follows.In Section~\ref{sec:preliminary}, we review the entropy properties of the Euler equations, their formulation in curvilinear coordinates, and the nodal DG polynomial framework. Section~\ref{subsec:dgsem_plain} presents the entropy-stable DGSEM on curvilinear meshes. The oscillation-eliminating procedure is introduced and analyzed in Section~\ref{sec:oedg}. Numerical examples demonstrating accuracy, robustness, and effectiveness of the proposed method are provided in Section~\ref{sec:numerical}. Finally, conclusions are drawn in Section~\ref{sec:conclude}.

The primary contribution of this work is the further development of the entropy stable oscillation-eliminating discontinuous Galerkin (OEDG) framework for the compressible Euler equations.
Specifically, the OEDG methodology is generalized to curvilinear meshes, where geometric complexity poses additional challenges.
A new shock indicator, derived directly from the OE damping coefficients, is introduced to localize the oscillation control mechanism and significantly
reduce computational cost.
The modified OEDG procedure is then combined with an entropy-stable DG formulation, resulting in a robust high-order nodal DG method that simultaneously enforces entropy stability and effectively suppresses nonphysical oscillations on general curvilinear meshes.
The performance of the proposed method is demonstrated through a series of challenging numerical experiments.

The remainder of the paper is organized as follows.
In Section~\ref{sec:preliminary}, we review the entropy properties of the Euler equations, their formulation in curvilinear coordinates, and the nodal DG polynomial framework.
Section~\ref{sec:dgsem_curve} presents the entropy-stable DGSEM formulation on curvilinear meshes.
The oscillation-eliminating procedure is introduced and analyzed in Section~\ref{sec:oedg}.
Numerical examples demonstrating the accuracy, robustness, and effectiveness of the proposed method are provided in Section~\ref{sec:numerical}.
Finally, concluding remarks are given in Section~\ref{sec:conclude}.

% ============================================================
% Preliminary section (polished narrative; manuscript-adapted)
% ============================================================

\section{Preliminary}
\label{sec:preliminary}
This section summarizes the theoretical background required for the
construction and analysis of entropy-stable nodal discontinuous
Galerkin methods for the two-dimensional Euler equations on
curvilinear meshes. In particular, we review the entropy properties of
the Euler equations, introduce their formulation in curvilinear
coordinates, and describe the nodal discontinuous Galerkin polynomial
spaces used throughout this work.
\subsection{Entropy properties of the Euler equations}

We consider the compressible Euler equations written in conservative
form
\begin{equation}
\partial_t \bm U + \nabla\cdot\bm F(\bm U)=0,
\end{equation}
where the vector of conservative variables is
\begin{equation}
\bm U =
\begin{pmatrix}
\rho\\
\rho\bm u\\
E
\end{pmatrix},
\end{equation}
with density $\rho$, velocity $\bm u=(u,v)^T$, and total energy
\[
E=\rho e+\tfrac12\rho|\bm u|^2.
\]
The flux tensor is given by
\begin{equation}
\bm F(\bm U)=
\begin{pmatrix}
\rho\bm u\\
\rho\bm u\otimes\bm u+p\bm I\\
(E+p)\bm u
\end{pmatrix},
\end{equation}
where the pressure $p$ is related to $(\rho,e)$ through an equation of
state. Throughout this work, we assume an ideal gas equation of state,
\[
p=(\gamma-1)\rho e,\qquad \gamma=1.4.
\]

In two spatial dimensions, the flux tensor can be written as
\begin{equation}
\label{flux-c}
\begin{aligned}
&\bm F(\bm U)=(\bm f(\bm U),\bm g(\bm U))^T,\qquad
\bm U=(U_1,U_2,U_3,U_4)^T,\\
&\bm f(\bm U)=(\rho u,\ \rho u^2+p,\ \rho uv,\ (E+p)u)^T,\\
&\bm g(\bm U)=(\rho v,\ \rho uv,\ \rho v^2+p,\ (E+p)v)^T,
\end{aligned}
\end{equation}
so that the Euler equations take the componentwise form
\begin{equation}
\label{euler-c}
\partial_t U_k+\partial_x f_k(\bm U)+\partial_y g_k(\bm U)=0,
\qquad k=1,\dots,4.
\end{equation}

The Euler system is nonlinear and hyperbolic, and solutions may develop
discontinuities such as shocks and contact discontinuities in finite
time, even for smooth initial data. As a consequence, weak solutions
are generally not unique, and additional admissibility criteria are
required to select physically relevant solutions. These criteria are
provided by entropy conditions.

An \emph{entropy function--entropy flux pair}
$(\eta(\bm U),\bm q(\bm U))$ consists of a strictly convex scalar entropy
$\eta$ and an associated entropy flux $\bm q$ satisfying
\begin{equation}
\eta'(\bm U)\bm F'(\bm U)=\bm q'(\bm U).
\end{equation}
For smooth solutions, multiplication of the Euler equations by
$\eta'(\bm U)$ yields the entropy conservation law
\begin{equation}
\partial_t\eta(\bm U)+\nabla\cdot\bm q(\bm U)=0.
\end{equation}
Across discontinuities, entropy is no longer conserved. A weak solution
$\bm U$ is called an \emph{entropy solution} if it satisfies the entropy
inequality
\begin{equation}
\partial_t\eta(\bm U)+\nabla\cdot\bm q(\bm U)\le0
\quad\text{in the sense of distributions},
\end{equation}
which is consistent with the second law of thermodynamics.

Associated with a convex entropy $\eta(\bm U)$ are the
\emph{entropy variables}
\begin{equation}
\bm V:=\frac{\partial\eta}{\partial\bm U}.
\end{equation}
Strict convexity of $\eta$ implies that the mapping
$\bm U\leftrightarrow\bm V$ is one-to-one on the admissible state space.
Introducing the flux functions expressed in entropy variables,
$\bm G(\bm V)=\bm F\Big(\bm U(\bm V)\Big)$, the compatibility condition above is
equivalent to the symmetry of the Jacobians $\bm G_j'(\bm V)$ for each
spatial direction $j$. This symmetrization property is fundamental for
entropy analysis and motivates the introduction of the potential
function and potential fluxes,
\begin{equation}
\begin{aligned}
&\phi(\bm V)=\bm U(\bm V)^T\bm V-\eta\Big(\bm U(\bm V)\Big),\\
&\bm\psi_j(\bm V)=\bm G_j(\bm V)^T\bm V-\bm q_j\Big(\bm U(\bm V)\Big),
\qquad j=1,2,
\end{aligned}
\end{equation}
which play a central role in the design of entropy-conserving and
entropy-stable numerical schemes.

For the ideal-gas Euler equations, the physical specific entropy is
$s=\log(p\rho^{-\gamma})$. While the Euler system admits a family of
entropy pairs \cite{Harten1983}, the physically relevant entropy for the
Navier--Stokes equations with viscosity and heat conduction is uniquely
given by \cite{Hughes1986}
\begin{equation}
\label{entropy}
\eta=-\frac{\rho s}{\gamma-1},
\qquad
\bm q=-\frac{\rho s\,\bm u}{\gamma-1}.
\end{equation}
The corresponding entropy variables and potential fluxes are
\begin{equation}
\label{entrx}
\bm V=
\begin{pmatrix}
\dfrac{\gamma-s}{\gamma-1}-\dfrac{|\bm u|^2}{2c^2}\\[0.8ex]
u/c^2\\[0.4ex]
v/c^2\\[0.4ex]
-1/c^2
\end{pmatrix},
\qquad
\bm\psi=(\psi^f,\psi^g)=\rho\bm u,
\end{equation}
where $c=\sqrt{\gamma p/\rho}$ denotes the speed of sound. Throughout
this paper, we adopt the entropy function \eqref{entropy}.

The entropy variables provide a symmetrizing change of variables under
which the Euler equations can be written in symmetric form. This
structure underpins the continuous entropy analysis and serves as the
foundation for the construction of entropy-conserving numerical fluxes
and entropy-stable DG discretizations developed in the subsequent
sections.

%%%%%%%%%%%%%%%%%%%%%%%%%%%%%%%%%%%%%%%%%%%%%%%%%%%%%%%%%%%%%%%%
%%%%%%%%%%%%%%%%%%%%%%%%%%%%%%%%%%%%%%%%%%%%%%%%%%%%%%%%%%%%%%%%
\subsection{Formulation in curvilinear coordinates}
\label{sec_form_in_curve}

To construct high-order discretizations on curvilinear meshes, we transform the Euler equations
\eqref{euler-c} to a reference coordinate system.
The spatial domain $\Omega\subset\mathbb R^2$ is decomposed into non-overlapping curved quadrilateral
elements $\{\Omega_e\}$, each of which is mapped to the reference element
$\Omega_{\mathrm{ref}}=[-1,1]^2$.

Let $(\xi,\eta)\in\Omega_{\mathrm{ref}}$ denote the reference coordinates and
$(x^e,y^e)\in\Omega_e$ the corresponding physical coordinates.
The element mapping is given by
\[
\vec{x}^{\,e}(\xi,\eta)
=
\big(x^{e}(\xi,\eta),\,y^{e}(\xi,\eta)\big),
\]
where the superscript $(\cdot)^e$ indicates quantities associated with the element $\Omega_e$.
Under this mapping, the conservative variables are pulled back to the reference element via
\[
\bm U^e(\xi,\eta) := \bm U\big(x^e(\xi,\eta),y^e(\xi,\eta)\big).
\]

Applying the chain rule to \eqref{euler-c}, the Euler equations on the reference element take the form
\begin{equation}
\mathcal J^{e}\,\partial_t U_k^e
+
\partial_{\xi}\Tilde{f}_k^e
+
\partial_{\eta}\Tilde{g}_k^e
=0,
\qquad k=1,2,3,4,
\label{eq_curve_coord}
\end{equation}
where $\mathcal J^{e}$ denotes the Jacobian of the coordinate transformation and
$\Tilde{f}_k^e$, $\Tilde{g}_k^e$ are the contravariant flux components.

The Jacobian and contravariant fluxes are defined by
\begin{equation}
\begin{aligned}
\mathcal J^{e}
&=
x^{e}_{\xi}y^{e}_{\eta}-x^{e}_{\eta}y^{e}_{\xi},\\
\Tilde{f}_k^e(\bm U^e)
&=
y^{e}_{\eta}f_k(\bm U^e)-x^{e}_{\eta}g_k(\bm U^e),
\qquad k=1,2,3,4,\\
\Tilde{g}_k^e(\bm U^e)
&=
-\,y^{e}_{\xi}f_k(\bm U^e)+x^{e}_{\xi}g_k(\bm U^e),
\qquad k=1,2,3,4,
\end{aligned}
\label{eq_curve_term}
\end{equation}
where $f_k$ and $g_k$ denote the Cartesian flux components defined in
\eqref{flux-c}.

Since the Jacobian $\mathcal J^{e}$ is generally non-constant, the transformed system
\eqref{eq_curve_coord} constitutes a variable-coefficient hyperbolic system on the reference element.

A fundamental property of smooth coordinate mappings is the satisfaction of the
\emph{metric identities}
\begin{equation}
\partial_{\xi}x^{e}_{\eta}=\partial_{\eta}x^{e}_{\xi},
\qquad
\partial_{\xi}y^{e}_{\eta}=\partial_{\eta}y^{e}_{\xi},
\label{eq_metric_id_cont}
\end{equation}
which follow from the commutation of mixed partial derivatives.
At the discrete level, however, these identities are not automatically satisfied and must be preserved
through a compatible discretization of the geometric terms.

%%%%%%%%%%%%%%%%%%%%%%%%%%%%%%%%%%%%%%%%%%%%%%%%%%%%%%%%%%%%%%%%
%%%%%%%%%%%%%%%%%%%%%%%%%%%%%%%%%%%%%%%%%%%%%%%%%%%%%%%%%%%%%%%%
\subsection{DG polynomials on the reference element}
\label{subsec:dg_poly_ref}

On the reference element $\Omega_{\mathrm{ref}}=[-1,1]^2$, we approximate all scalar- and vector-valued quantities
by polynomials of degree at most $N$ in each coordinate direction.
To this end, we introduce $N+1$ Legendre--Gauss--Lobatto (LGL) points
$\{\xi_i\}_{i=0}^N$ and $\{\eta_j\}_{j=0}^N$ on the interval $[-1,1]$ in each coordinate direction.
Associated with these nodes are the one-dimensional Lagrange interpolation polynomials
\begin{equation}
\label{lgl}
\phi_j(\xi)
=
\prod_{\substack{i=0\\ i\neq j}}^N
\frac{\xi-\xi_i}{\xi_j-\xi_i},
\qquad j=0,\ldots,N,
\end{equation}
which satisfy the nodal interpolation property $\phi_j(\xi_i)=\delta_{ij}$.

Using tensor products of the one-dimensional basis functions, we define the nodal polynomial space
\begin{equation}
\label{qN}
\mathbb Q_N(\Omega_{\mathrm{ref}})
:=
\mathrm{span}\Big\{
\phi_i(\xi)\phi_j(\eta)
\;:\;
0\le i,j\le N
\Big\},
\end{equation}
that is, the space of polynomials of degree at most $N$ in each coordinate direction.

Any scalar function $U$ on $\Omega_{\mathrm{ref}}$ can be approximated 
by $U_N\in \QN$ as
\begin{equation}
{U}(\xi,\eta)\approx U_N(\xi,\eta)=
\sum_{i=0}^N\sum_{j=0}^N U_{i,j}\phi_i(\xi)\phi_j(\eta),
\end{equation}
where $\{U_{i,j}\}$ are the nodal degrees of freedom (DOFs). 
In particular, for a component of the conservative variable $U_k$ on physical element $\Omega_e$, we approximate it via the pull back:
\begin{align}
\label{pull_back}
U_k(x^e(\xi, \eta), y^e(\xi, \eta))= {U}_k^e(\xi,\eta)
\approx {U}_{k,N}^e(\xi,\eta)=\sum_{i=0}^N\sum_{j=0}^N (U^e_k)_{i,j}\phi_i(\xi)\phi_j(\eta), \quad 
k=1,2,3,4.    
\end{align}
Moreover, we also use $\QN$ to approximate flux functions $\bm F(\bm U)$  via interpolation:
\begin{align}
    \label{flux}
\bm F(\bm U(x^e(\xi, \eta), y^e(\xi, \eta)))
\approx 
\bm F_N(\bm {U}_{N}^e)(\xi,\eta)=\sum_{i=0}^N\sum_{j=0}^N 
\bm F(\bm U_{i,j}^e)\phi_i(\xi)\phi_j(\eta).
\end{align}
This collocation-based formulation allows all unknowns and fluxes to be represented solely by their values
at the LGL nodes.
% We denote the vector  (value of $\bm U$ at collocation point $(\xi_i, \eta_j)$)
% \[
% \bm U_{i,j}:= [(U^e_1)_{i,j}, (U^e_2)_{i,j}, (U^e_3)_{i,j}, (U^e_4)_{i,j}]^T.
% \]
% Flux variables (functions of $\bm U$) can then be evaluated on each collocation point $(\xi_i, \eta_j)$ via
% \[
% \bm F_{i,j}=\bm F(\bm U_{i,j}), \qquad \forall\, i,j.
% \]

To approximate derivatives, we introduce the differentiation matrix
\begin{equation}
D_{i,j} = \phi_j'(\xi_i), \qquad i,j=0,\ldots,N,
\label{eq_D}
\end{equation}
so that nodal derivatives in reference space are given by
\begin{equation}
\begin{aligned}
\dfrac{\partial U}{\partial\xi}(\xi_i,\eta_j) &= \sum_{p=0}^N D_{i,p}\,U_{p,j},\\
\dfrac{\partial U}{\partial\eta}(\xi_i,\eta_j) &= \sum_{p=0}^N D_{j,p}\,U_{i,p}.
\end{aligned}
\label{der}
\end{equation}

The differentiation matrix satisfies the following standard properties.

\begin{lemma}[Properties of $D_{i,j}$]
\label{lemma_D}
\begin{itemize}
\item {Summation-by-parts (SBP) property:}
\begin{equation}
w_iD_{i,j}+w_jD_{j,i}=\delta_{N,i}\delta_{N,j}-\delta_{0,i}\delta_{0,j}.
\label{eq_sbp_1}
\end{equation}
\item {Summation identity:}
\begin{equation}
\label{sum-1}
\sum_{j=0}^N D_{i,j}=0.
% ,\qquad
% \sum_{i=0}^N w_iD_{i,j}=\delta_{N,j}-\delta_{0,j}.
\end{equation}
\end{itemize}
\end{lemma}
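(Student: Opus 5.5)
The plan is to treat both properties as consequences of two facts. First, the LGL quadrature with weights $\{w_i\}_{i=0}^N$ is exact for one-dimensional polynomials of degree up to $2N-1$. Second, every polynomial of degree at most $N$ is reproduced exactly by its Lagrange interpolant on the nodes $\{\xi_i\}$. The weights $w_i$ have not been introduced explicitly yet, so I would take them to be the standard LGL quadrature weights on $[-1,1]$.

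The summation identity \eqref{sum-1} is the easier part, so I would prove it first. The constant function $1$ lies in the span of $\{\phi_j\}_{j=0}^N$, and its nodal values are all $1$. Hence $\sum_{j=0}^N \phi_j(\xi)\equiv 1$ identically on $[-1,1]$. Differentiating gives $\sum_{j} \phi_j'(\xi)\equiv 0$, and evaluating at $\xi=\xi_i$ yields $\sum_j D_{i,j}=0$ by the definition \eqref{eq_D}.

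For the SBP property \eqref{eq_sbp_1}, I would start from the integration-by-parts identity
\[
\int_{-1}^1 \phi_i(\xi)\phi_j'(\xi)\,d\xi+\int_{-1}^1 \phi_i'(\xi)\phi_j(\xi)\,d\xi=\phi_i(1)\phi_j(1)-\phi_i(-1)\phi_j(-1).
\]
Since $\xi_0=-1$ and $\xi_N=1$ are LGL nodes, the nodal property $\phi_j(\xi_i)=\delta_{ij}$ reduces the right-hand side to $\delta_{N,i}\delta_{N,j}-\delta_{0,i}\delta_{0,j}$. Each integrand $\phi_i\phi_j'$ has degree at most $2N-1$, so the LGL quadrature evaluates it exactly. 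This gives
\[
\int_{-1}^1\phi_i\phi_j'\,d\xi=\sum_{k=0}^N w_k\phi_i(\xi_k)\phi_j'(\xi_k)=w_iD_{i,j},
\]
and by symmetry the other integral equals $w_jD_{j,i}$. Adding the two yields \eqref{eq_sbp_1}.

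The only step requiring care, and the one I regard as the main obstacle, is the degree count that justifies exactness of the quadrature. The product $\phi_i\phi_j'$ has degree $N+(N-1)=2N-1$, which is precisely the maximal degree integrated exactly by the $(N+1)$-point LGL rule. I would state this exactness as a standard fact about Gauss--Lobatto quadrature rather than reprove it. A remark should also record that the argument uses that the endpoints $\pm1$ are nodes, which is exactly what produces the boundary terms in \eqref{eq_sbp_1}.
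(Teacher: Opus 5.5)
Your proof is correct: the row-sum identity follows from $\sum_j\phi_j\equiv 1$, and the SBP identity follows from integrating $(\phi_i\phi_j)'$ exactly with the $(N+1)$-point LGL rule, which is exact up to degree $2N-1$, using $\xi_0=-1$ and $\xi_N=1$. The paper gives no proof of this lemma; it calls these standard properties and describes SBP as the discrete analogue of integration by parts, and your argument is the standard justification it leaves implicit.
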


The SBP property is the discrete analogue of integration by parts and is central in proving entropy stability.

\begin{remark}[Discrete metric identities]
\label{rem:discrete_metric}
At the continuous level, metric identities \eqref{eq_metric_id_cont} hold whenever the mapping is sufficiently smooth.
At the discrete level, metric identities are not automatically satisfied and must be enforced through a compatible
discretization of the geometric terms. One convenient form is
\begin{equation}
\begin{aligned}
\sum_{p_2=0}^N D_{q_2,p_2}(y_{\xi})_{q_1,p_2}
&=\sum_{p_1=0}^N D_{q_1,p_1}(y_{\eta})_{p_1,q_2}, \qquad \forall\, q_1,q_2,\\
\sum_{p_2=0}^N D_{q_2,p_2}(x_{\xi})_{q_1,p_2}
&=\sum_{p_1=0}^N D_{q_1,p_1}(x_{\eta})_{p_1,q_2}, \qquad \forall\, q_1,q_2,
\end{aligned}
\label{eq_metric_id_dis}
\end{equation}
where $(\cdot)_{i,j}$ denotes nodal DOFs of the corresponding geometric derivatives. If the coordinate mapping $(x(\xi,\eta),y(\xi,\eta))$ belongs to
$\mathbb Q_N(\Omega_{\mathrm{ref}})$, then the discrete metric identities \eqref{eq_metric_id_dis} are satisfied
automatically when the geometric terms are evaluated using the same LGL interpolation and differentiation
operators; see Kopriva~\cite{Kopriva2006} for details.
\end{remark}

%%%%%%%%%%%%%%%%%%%%%%%%%%%%%%%%%%%%%%%%%%%%%%%%%%5%%
\subsection{LGL collocation-based quadrature on the reference element}
\label{sec:collocation}
Let  $\{w_i\}_{i=0}^N$ denotes the LGL quadrature weights on the interval $[-1,1]$. On the reference element $\Omega_{\mathrm{ref}} = [-1,1]^2$, tensor-product LGL quadrature yields
the following approximation of volume integrals:
\begin{equation}
\int_{\Omega_{\mathrm{ref}}} u\, v \, d\xi \, d\eta
\;\approx\;
(u,v)_{\Omega_{\mathrm{ref}},w}
:=
\sum_{q_1=0}^{N}\sum_{q_2=0}^{N}
w_{q_1} w_{q_2}\,
u_{q_1,q_2} \, v_{q_1,q_2}.
\label{eq_quad_rule}
\end{equation}

Boundary integrals on $\partial\Omega_{\mathrm{ref}}$ are approximated using
one-dimensional LGL quadrature applied independently on each face.
We define the discrete boundary inner product
\begin{equation}
\langle u, v \rangle_{\partial\Omega_{\mathrm{ref}},w}
:=
\sum_{\gamma \subset \partial\Omega_{\mathrm{ref}}}
\sum_{q=0}^{N}
w_q \,
u|_{\gamma,q} \, v|_{\gamma,q},
\label{eq:boundary_inner_product}
\end{equation}
where $\gamma$ denotes one of the four faces of $\Omega_{\mathrm{ref}}$, and
$u|_{\gamma,q}$, $v|_{\gamma,q}$ denote the evaluation of $u$ and $v$ at the
$q$th LGL node on face $\gamma$.

Equivalently, writing the contributions from the four faces explicitly,
\begin{align}
\langle u, v \rangle_{\partial\Omega_{\mathrm{ref}},w}
&=
\sum_{q=0}^{N} w_q \Big(
u(-1,\eta_q)v(-1,\eta_q)
+u(1,\eta_q)v(1,\eta_q)
\nonumber\\
&\qquad\qquad\quad
+u(\xi_q,-1)v(\xi_q,-1)
+u(\xi_q,1)v(\xi_q,1)
\Big).
\label{eq:boundary_inner_product_explicit}
\end{align}

The discrete inner products
$(\cdot,\cdot)_{\Omega_{\mathrm{ref}},w}$ and
$\langle\cdot,\cdot\rangle_{\partial\Omega_{\mathrm{ref}},w}$
are used throughout to approximate volume and surface integrals on the
reference element.

% For a numerical flux vector
% $\widehat{\boldsymbol F} = (\tilde f_k^{e,*}, \tilde g_k^{e,*})$,
% we define the discrete numerical flux pairing by
% \begin{equation}
% \left\langle
% \widehat{\boldsymbol F}\cdot \hat{\boldsymbol n},\, v
% \right\rangle_{\partial\Omega_{\mathrm{ref}},w}
% :=
% \sum_{\gamma \subset \partial\Omega_{\mathrm{ref}}}
% \sum_{q=0}^N
% w_q\,
% \Big(\widehat{\boldsymbol F}\cdot \hat{\boldsymbol n}\Big)\big|_{\gamma,q}
% \, v\big|_{\gamma,q},
% \label{eq:boundary_flux_pairing}
% \end{equation}
% where $\hat{\boldsymbol n}$ denotes the outward unit normal vector on
% $\partial\Omega_{\mathrm{ref}}$.

% Similarly
% \begin{equation}
% \int_{\Omega_{\mathrm{ref}}} u_{\xi} v\, d\xi d\eta
% \approx (u_{\xi},v)_w :=
% \sum_{q_1=0}^{N}\sum_{q_2=0}^N w_{q_1}w_{q_2}\, v_{q_1,q_2}
% \sum_{p_1=0}^N D_{q_1,p_1}u_{p_1,q_2}.
% \end{equation}

\section{Entropy stable DGSEM on Curvilinear Meshes}
\label{sec:dgsem_curve}
In this section we describe the discontinuous Galerkin spectral element method (DGSEM) on curvilinear quadrilateral meshes. We first present the standard strong-form DGSEM discretization with LGL quadrature rules, which will serve as the baseline scheme. Next, this formulation will be modified using entropy conserving volume flux
to obtain an entropy-stable DG method.

Throughout this section, the spatial domain $\Omega$ is decomposed into conforming non-overlapping curved quadrilateral elements $\Omega_h= \{\Omega_e\}_{e=1}^{N_e}$, each mapped to the reference element $\Omega_{\mathrm{ref}}=[-1,1]^2$.
All discretizations are defined elementwise; coupling between elements is achieved through numerical fluxes
at shared interfaces.

\subsection{Strong-form DGSEM on curvilinear meshes}
\label{subsec:dgsem_plain}
We discretize the mapped Euler equations \eqref{eq_curve_coord} on each reference
element $\Omega_{\mathrm{ref}}=[-1,1]^2$, using the contravariant flux formulation
introduced in \eqref{eq_curve_term}. Throughout this section, quantities
superscripted by $(\cdot)^e$ are understood to be local to the physical element
$\Omega_e$.

The standard nodal DG weak formulation on each element reads as follows:
find $\bm U_N^e= (U_1^e,U_2^e, U_3^e, U_4^e)^T\in[\QN]^4$ such that, for $k=1,2,3,4$,
\begin{equation}
\int_{\Omega_{\mathrm{ref}}} \mathcal{J}^e\,\partial_tU_k^e\,\hat\Phi \,d\xi d\eta
-\int_{\Omega_{\mathrm{ref}}}\Tilde{f}_k^e\,\partial_{\xi}\hat\Phi \,d\xi d\eta
-\int_{\Omega_{\mathrm{ref}}}\Tilde{g}_k^e\,\partial_{\eta}\hat\Phi \,d\xi d\eta
+\int_{\partial\Omega_{\mathrm{ref}}}(\Tilde{f}_k^{e,*},\Tilde{g}_k^{e,*})\cdot \hat{\bm n}\,\hat\Phi \,d\hat s=0,\label{eq_weak_form}
\end{equation}
for all test functions $\hat \Phi(\xi,\eta)\in \mathbb{Q}_N(\Omega_{\mathrm{ref}})$.
Here $\hat{\bm n}$ denotes the outward unit normal vector on the reference boundary
$\partial\Omega_{\mathrm{ref}}$, and $\Tilde{f}_k^{e,*}$, $\Tilde{g}_k^{e,*}$ are the 
contravariant numerical fluxes of the form
\begin{equation}
\label{eq_contravariant_star}
\tilde{f}_k^{e,*} = y_{\eta}^e f_k^* - x_{\eta}^e g_k^*,
\qquad
\tilde{g}_k^{e,*} = -y_{\xi}^e f_k^* + x_{\xi}^e g_k^*,
\end{equation} 
with the numerical flux $\bm F_k^* = (f_k^*, g_k^*)^T$ on the physical element $\Omega_e$, to be specified below.
The physical conservative variables $U_k(x^e,y^e)$ are recovered from
$U_k^e(\xi,\eta)$ via the pull-back mapping \eqref{pull_back}.

Applying integration by parts to \eqref{eq_weak_form} yields the equivalent
strong form
\begin{equation}
\int_{\Omega_{\mathrm{ref}}} \mathcal{J}^e\,\partial_tU_k^e\,\hat\Phi \,d\xi d\eta
+\int_{\Omega_{\mathrm{ref}}}\big(\partial_{\xi}\Tilde{f}_k^e+\partial_{\eta}\Tilde{g}_k^e\big)\,\hat\Phi \,d\xi d\eta
-\int_{\partial\Omega_{\mathrm{ref}}}(\Tilde{f}_k^e-\Tilde{f}_k^{e,*},\Tilde{g}_k^e-\Tilde{g}_k^{e,*})\cdot \hat{\bm n}\,\hat\Phi \,d\hat s=0.
\label{eq_strong_form}
\end{equation}

Next, we approximate the flux functions by polynomial interpolants
\begin{align}
 \Tilde f_{k}^e \approx  \Tilde f_{k,N}^e=\sum_{i=0}^N\sum_{j=0}^N
 (\Tilde f_{k}^e)_{i,j}\phi_i(\xi)\phi_j(\eta),\quad 
 \Tilde g_{k}^e \approx \Tilde g_{k,N}^e=\sum_{i=0}^N\sum_{j=0}^N
 (\Tilde g_{k}^e)_{i,j}\phi_i(\xi)\phi_j(\eta),
\end{align}
where the nodal contravariant flux values are computed via \eqref{eq_curve_term}. 
For example,
\[
(\Tilde f_{k}^e)_{i,j} = 
(y^{e}_{\eta})_{i,j}f_k\big((\bm U_N^e)_{i,j}\big)-
(x^{e}_{\eta})_{i,j}g_k\big((\bm U_N^e)_{i,j}\big).
\]
Next, evaluating the integrals in \eqref{eq_strong_form} using the tensor-product LGL quadrature rules \eqref{eq_quad_rule} and \eqref{eq:boundary_inner_product} yields the semidiscrete DGSEM scheme
\begin{equation}
\label{eq_dgsem}
\left(\mathcal{J}^e\,\partial_tU_k^e, \hat\Phi\right)_{\omgr,w}
+
\left(\big(\partial_{\xi}\Tilde{f}_{k,N}^e+\partial_{\eta}\Tilde{g}_{k,N}^e\big), \hat\Phi\right)_{\omgr,w}
-\left\langle(\Tilde{f}_{k,N}^e-\Tilde{f}_k^{e,*},\Tilde{g}_{k,N}^e-\Tilde{g}_k^{e,*})\cdot \hat{\bm n},\hat\Phi \right\rangle_{\domgr,w}=0.
\end{equation}

To motivate the definition of the numerical flux $\boldsymbol F_k^{*}$, it is
convenient to push forward the reference boundary integral
$\left\langle (\Tilde{f}_k^{e,*},\Tilde{g}_k^{e,*})\cdot \hat{\bm n}, \hat\Phi \right\rangle_{\partial\Omega_{\mathrm{ref}},w}$
to the physical element boundary $\partial\Omega_e$. By definition \eqref{eq_contravariant_star}, we obtain
\begin{align}
\label{eq_map}
\left\langle (\Tilde{f}_k^{e,*},\Tilde{g}_k^{e,*})\cdot \hat{\bm n}, \hat\Phi \right\rangle_{\partial\Omega_{\mathrm{ref}}}
&=
\int_{\partial\Omega_{\mathrm{ref}}}
\boldsymbol F_k^{*} \cdot
\bigl(\mathcal{J}^e(\mathcal{G}^e)^{-T} \hat{\boldsymbol n}\bigr)\,
\hat\Phi\, d\hat s =
\int_{\partial\Omega_e}
\boldsymbol F_k^{*} \cdot \boldsymbol n\, \Phi\, ds,
\end{align}
which is the corresponding physical-edge integral.
Here $\hat\Phi(\xi,\eta) = \Phi(x(\xi,\eta),y(\xi,\eta))$ denotes the standard
pull-back of the physical test function to the reference element, and
\[
\mathcal{G}^e
:=
\begin{pmatrix}
\partial_\xi x & \partial_\xi y \\
\partial_\eta x & \partial_\eta y
\end{pmatrix}
\]
is the Jacobian matrix of the mapping from reference element $\omgr$ to 
physical element $\Omega_e$.
The vector $\mathcal{J}^e(\mathcal{G}^e)^{-T} \hat{\boldsymbol n}$ represents the
scaled outward normal on the physical edge and satisfies the normal-measure
identity
\[
\bigl(\mathcal{J}^e(\mathcal{G}^e)^{-T} \hat{\boldsymbol n}\bigr)\, d\hat s
=
\boldsymbol n\, ds.
\]

Using the discrete boundary inner product defined in
\eqref{eq:boundary_inner_product}, the numerical flux term is approximated by
\begin{align}
\left\langle (\Tilde{f}_k^{e,*},\Tilde{g}_k^{e,*})\cdot \hat{\bm n}, \hat\Phi \right\rangle_{\partial\Omega_{\mathrm{ref}}}
=
\sum_{\gamma \subset \partial\Omega_{\mathrm{ref}}}
\sum_{q=0}^{N}
w_q \,
\left.
\Bigl(
\boldsymbol F_k^{*} \cdot
\bigl(\mathcal{J}^e(\mathcal{G}^e)^{-T} \hat{\boldsymbol n}\bigr)\,
\hat\Phi
\Bigr)
\right|_{\gamma,q},
\label{phy}
\end{align}
which approximates the physical boundary integral
$\int_{\partial\Omega_e} \bm F_k^* \cdot \bm n \, \Phi \, ds$.

Hence, to evaluate the integral \eqref{phy}, it remains to define the numerical
flux in the normal direction on each physical edge.
Let $E = \partial\Omega_L \cap \partial\Omega_R$ be an interior edge shared by
two elements $\Omega_L$ and $\Omega_R$. Denote by $\boldsymbol U_L$ and
$\boldsymbol U_R$ the traces of the conservative state on the two sides of $E$,
and let $\boldsymbol n = (n_x,n_y)$ be the unit normal vector on $E$, oriented
from $\Omega_L$ to $\Omega_R$.
The normal numerical flux is chosen as the local Lax--Friedrichs (Rusanov) flux,
defined componentwise by
\begin{equation}
\label{eq:llf_normal_flux}
\boldsymbol F_k^{*} \cdot \boldsymbol n
=
\frac12\Big(
\boldsymbol n \cdot \boldsymbol F_k(\boldsymbol U_L)
+
\boldsymbol n \cdot \boldsymbol F_k(\boldsymbol U_R)
\Big)
-
\frac{\alpha}{2}\Big(
(U_k)_R - (U_k)_L
\Big),
\end{equation}
where the dissipation parameter $\alpha$ is taken as
\begin{align}
\label{alpha-x}
\alpha
=
\max\bigl\{
|\boldsymbol u_L \cdot \boldsymbol n| + c_L,\;
|\boldsymbol u_R \cdot \boldsymbol n| + c_R
\bigr\},    
\end{align}
where $\boldsymbol u_{L/R}$ and $c_{L/R}$ denote the velocity vector and sound
speed evaluated at the left and right states, respectively.

Choosing the test function in \eqref{eq_dgsem} as
$\hat\Phi=\phi_{p_1}(\xi)\phi_{p_2}(\eta)$ and evaluating the resulting terms yields the nodal (collocation) form of the scheme:
\begin{equation}
\label{eq_dgsem1}
\begin{aligned}
&w_{p_1}w_{p_2}\mathcal{J}_{p_1,p_2}^e\,(\partial_t U_k^e)_{p_1,p_2}
+
w_{p_1}w_{p_2}\sum_{i=0}^N D_{p_1,i}\,(\tilde f_k^e)_{i,p_2}
+
w_{p_1}w_{p_2}\sum_{j=0}^N D_{p_2,j}\,(\tilde g_k^e)_{p_1,j} \\
&\quad
+
w_{p_1}\Bigl(
(\Delta\tilde g_k^e)_{p_1,0}\,\delta_{p_2,0}
-
(\Delta\tilde g_k^e)_{p_1,N}\,\delta_{p_2,N}
\Bigr)
+
w_{p_2}\Bigl(
(\Delta\tilde f_k^e)_{0,p_2}\,\delta_{p_1,0}
-
(\Delta\tilde f_k^e)_{N,p_2}\,\delta_{p_1,N}
\Bigr)
=0,
\end{aligned}
\end{equation}
for all $0\le p_1,p_2\le N$ and $k=1,2,3,4$, where
$\Delta\tilde f_k^e=\tilde f_k^e-\tilde f_k^{e,*}$ and
$\Delta\tilde g_k^e=\tilde g_k^e-\tilde g_k^{e,*}$.

The strong-form DGSEM on curvilinear meshes is defined by the semidiscrete scheme
\eqref{eq_dgsem} together with the local Lax--Friedrichs numerical flux
\eqref{eq:llf_normal_flux}. This formulation serves as the baseline DGSEM and
provides the foundation for the entropy-stable modifications introduced in the
following subsection.

%%%%%%%%%%%%%%%%%%%%%%%%%%%%%%%%%%%%%%%%%%%%%%%%%%%%%%%%%%%%%%%%%%55
\subsection{Entropy stable DGSEM}
The strong-form DGSEM discretization introduced in the previous subsection is conservative and high-order
accurate, but it does not satisfy a discrete entropy conservation or entropy stability property.
In particular, the pointwise evaluation of nonlinear fluxes in the volume terms of \eqref{eq_dgsem} prevents
the scheme from mimicking the continuous entropy balance, even when entropy-stable numerical fluxes are used
at element interfaces.

To obtain an entropy-stable formulation, we follow the entropy framework of Tadmor
\cite{tadmor1987numerical,tadmor2003entropy}
and exploit the flexibility of nodal DG methods to modify the
\emph{volume discretization}. The key idea is to replace the pointwise fluxes in the volume integrals of
\eqref{eq_dgsem} by symmetric two-point numerical fluxes that are entropy conservative at the discrete level; see, e.g. \cite{Carpenter14,carpenter2016entropy}.

We first recall the definition of an entropy-conservative two-point flux.
\begin{definition}
\label{def_ec_vol_flux}
A two-point numerical volume flux
\[
\bm F^\#(\bm U_L,\bm U_R)
=
\big(\bm f^\#(\bm U_L,\bm U_R),\bm g^\#(\bm U_L,\bm U_R)\big)
\]
is said to be \emph{entropy conservative} for the physical flux
$\bm F(\bm U)=(\bm f(\bm U),\bm g(\bm U))$ if it satisfies:
\begin{itemize}
\item \textbf{Consistency:}
$\bm F^\#(\bm U,\bm U)=\bm F(\bm U)$;
\item \textbf{Symmetry:}
$\bm F^\#(\bm U_L,\bm U_R)=\bm F^\#(\bm U_R,\bm U_L)$;
\item \textbf{Entropy conservation:}
\begin{equation}
\label{ecx}
(\bm V_R-\bm V_L)\cdot \bm F^\#(\bm U_L,\bm U_R)
=
\bm\psi_R-\bm\psi_L,
\end{equation}
where $\bm V$ and $\bm\psi=(\psi^f,\psi^g)$ denote the entropy variables and entropy potential fluxes \eqref{entrx} associated with the entropy function \eqref{entropy}.
\end{itemize}
\end{definition}

For the Euler equations, several entropy-conservative two-point fluxes have been proposed in the literature,
including the fluxes of Ismail and Roe \cite{ismail2009affordable} and Chandrashekar
\cite{chandrashekar2013kinetic}. In this work, we adopt the entropy-conservative flux introduced by
Chandrashekar \cite{chandrashekar2013kinetic}, which is also  kinetic-energy preserving. Its components are given by
\begin{equation}
\label{eq_chandrasheka}
\begin{alignedat}{2}
&f_1^\# = \rho^{\log}\avg{u},
&&\quad
g_1^\# = \rho^{\log}\avg{v},\\
&f_2^\# = \rho^{\log}\avg{u}^2+\hat{p},
&&\quad
g_2^\# = \rho^{\log}\avg{u}\avg{v},\\
&f_3^\# = \rho^{\log}\avg{u}\avg{v},
&&\quad
g_3^\# = \rho^{\log}\avg{v}^2+\hat{p},\\
&f_4^\# = \rho^{\log}\avg{u}\hat{h},
&&\quad
g_4^\# = \rho^{\log}\avg{v}\hat{h},
\end{alignedat}
\end{equation}
where $f_k^\#$ and $g_k^\#$ denote the components of the numerical fluxes
$\bm f^\#$ and $\bm g^\#$, respectively.

The arithmetic and logarithmic averages appearing above are defined by
\begin{equation}
\avg{u}=\frac{u_L+u_R}{2},
\qquad
u^{\log}=\frac{u_L-u_R}{\log(u_L)-\log(u_R)},
\end{equation}
and the averaged pressure and enthalpy are given by
\begin{equation}
\begin{aligned}
\hat{p}=\frac{\avg{\rho}}{2\avg{\beta}},\quad \hat{h}
&=
\frac{1}{2\beta^{\log}(\gamma-1)}
-\frac12\big(\avg{u}^2+\avg{v}^2\big)
+\frac{\hat{p}}{\rho^{\log}}
+\avg{u}^2+\avg{v}^2,
\end{aligned}
\end{equation}
where $\beta = \frac{\rho}{2p}.$

Since the entropy-conservative volume fluxes are defined in a two-point fashion, the geometric terms in the
contravariant fluxes must be discretized consistently. Following
\cite{wintermeyer2017entropy}, all metric coefficients are replaced by symmetric arithmetic averages, e.g.,
\begin{equation}
\{\!\{x_{\xi}\}\!\}_{L,R}
=
\frac12\big((x_{\xi})_L+(x_{\xi})_R\big).
\end{equation}

Using these averaged metric terms, the entropy-conservative contravariant volume fluxes are defined as
\begin{align}
\label{ec}
(\Tilde{f}_k^{e,\#})_{(i,p_1),p_2}
&=
\{\!\{y_{\eta}^e\}\!\}_{(i,p_1),p_2}(f_k^\#)_{(i,p_1),p_2}
-
\{\!\{x_{\eta}^e\}\!\}_{(i,p_1),p_2}(g_k^\#)_{(i,p_1),p_2},\\
(\Tilde{g}_k^{e,\#})_{p_1,(j,p_2)}
&=
-
\{\!\{y_{\xi}^e\}\!\}_{p_1,(j,p_2)}(f_k^\#)_{p_1,(j,p_2)}
+
\{\!\{x_{\xi}^e\}\!\}_{p_1,(j,p_2)}(g_k^\#)_{p_1,(j,p_2)}.
\end{align}

The entropy-stable DGSEM is obtained by replacing the volume fluxes in \eqref{eq_dgsem} with the
entropy-conservative fluxes \eqref{ec}. Specifically, for each element $\Omega_e$, we seek an approximatate solution
$$\bm U_N^e= (U_1^e,U_2^e, U_3^e, U_4^e)^T\in[\QN]^4$$ 
% nodal values
% $U^e=\{U_{p_1,p_2}^e\}$ 
such that for all $0\le p_1,p_2\le N$ and $k=1,\dots,4$, the following semi-discrete formulation holds:
\begin{equation}
\label{eq_esdgsem}
\begin{aligned}
&w_{p_1}w_{p_2}\left(\mathcal{J}_{p_1,p_2}^e\,(\partial_tU_k^e)_{p_1,p_2}
+
2\sum_{i=0}^N D_{p_1,i}\,(\tilde{f}_k^{e,\#})_{(i,p_1),p_2}
+
2
\sum_{j=0}^N D_{p_2,j}\,(\tilde{g}_k^{e,\#})_{p_1,(j,p_2)}\right)\\
&+
w_{p_1}\Bigl(
(\Delta\tilde g_k^e)_{p_1,0}\,\delta_{p_2,0}
-
(\Delta\tilde g_k^e)_{p_1,N}\,\delta_{p_2,N}
\Bigr)
+
w_{p_2}\Bigl(
(\Delta\tilde f_k^e)_{0,p_2}\,\delta_{p_1,0}
-
(\Delta\tilde f_k^e)_{N,p_2}\,\delta_{p_1,N}
\Bigr)
=0.
\end{aligned}
\end{equation}

The factor of $2$ arises from the symmetric split-form discretization and, together with the SBP property of
the differentiation matrices, ensures that the discrete entropy balance mirrors the continuous entropy
identity.

The fundamental properties of the resulting entropy-stable DGSEM are summarized below.
\begin{lemma}[Single-element analysis]
\label{lma:1}
Let $\{\bm U_N^e\}_e$ denote the numerical solution of the entropy-stable DGSEM~\eqref{eq_esdgsem}.  
On a single element $\Omega_e$, the discrete evolution of the conserved variables  satisfies the following relations:
for $k = 1,2,3,4$,
\begin{equation}
\label{eq_cc}
\begin{aligned}
\sum_{p_1,p_2=0}^N
w_{p_1} w_{p_2}\,
\mathcal{J}_{p_1,p_2}^e\,(\partial_t U_k^e)_{p_1,p_2}
= {} &
\sum_{p_1=0}^N w_{p_1}
\Bigl(
(\tilde g_k^{e,*})_{p_1,0}
-
(\tilde g_k^{e,*})_{p_1,N}
\Bigr) \\
&\quad +
\sum_{p_2=0}^N w_{p_2}
\Bigl(
(\tilde f_k^{e,*})_{0,p_2}
-
(\tilde f_k^{e,*})_{N,p_2}
\Bigr).
\end{aligned}
\end{equation}

Moreover, the discrete entropy evolves according to
\begin{equation}
\label{eq_ss}
\begin{aligned}
\sum_{p_1,p_2=0}^N
w_{p_1} w_{p_2}\,
\mathcal{J}_{p_1,p_2}^e\,
\bigl(\partial_t \eta(\bm U_N^e)\bigr)_{p_1,p_2}
= {} &
\sum_{p_1=0}^N \sum_{k=1}^4
w_{p_1}
\Bigl(
(\tilde g_k^{e,*})_{p_1,0} (V_k^e)_{p_1,0}
-
\tilde\psi^{e,g}_{p_1,0}
\Bigr) \\
&\; -
\sum_{p_1=0}^N \sum_{k=1}^4
w_{p_1}
\Bigl(
(\tilde g_k^{e,*})_{p_1,N} (V_k^e)_{p_1,N}
-
\tilde\psi^{e,g}_{p_1,N}
\Bigr) \\
&\; +
\sum_{p_2=0}^N \sum_{k=1}^4
w_{p_2}
\Bigl(
(\tilde f_k^{e,*})_{0,p_2} (V_k^e)_{0,p_2}
-
\tilde\psi^{e,f}_{0,p_2}
\Bigr) \\
&\; -
\sum_{p_2=0}^N \sum_{k=1}^4
w_{p_2}
\Bigl(
(\tilde f_k^{e,*})_{N,p_2} (V_k^e)_{N,p_2}
-
\tilde\psi^{e,f}_{N,p_2}
\Bigr).
\end{aligned}
\end{equation}

Here, the entropy variables are defined by
\[
(V_k^e)_{p_1,p_2}
=
\left(
\frac{\partial \eta}{\partial U_k}(\bm U_N^e)
\right)_{p_1,p_2},
\qquad k = 1,\dots,4,
\]
and the contravariant mapped entropy potentials are given by
\[
\tilde\psi^{e,f}_{p_1,p_2}
:=
(y_\eta^e)_{p_1,p_2}\,\psi^f_{p_1,p_2}
-
(x_\eta^e)_{p_1,p_2}\,\psi^g_{p_1,p_2},
\qquad
\tilde\psi^{e,g}_{p_1,p_2}
:=
-(y_\xi^e)_{p_1,p_2}\,\psi^f_{p_1,p_2}
+
(x_\xi^e)_{p_1,p_2}\,\psi^g_{p_1,p_2}.
\]
\end{lemma}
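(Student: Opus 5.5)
Both identities come from the same manipulation: sum the nodal equation \eqref{eq_esdgsem} over $(p_1,p_2)$ with suitable weights, then use the SBP property \eqref{eq_sbp_1} to collapse the two-point volume terms to boundary terms. For \eqref{eq_cc} the weight is $1$; for \eqref{eq_ss} it is $(V_k^e)_{p_1,p_2}$, summed also over $k$. In the entropy case we use the chain rule at each node,
\[
\sum_k (V_k^e)_{p_1,p_2}(\partial_t U_k^e)_{p_1,p_2}=(\partial_t\eta(\bm U_N^e))_{p_1,p_2},
\]
so the time-derivative term becomes the left side of \eqref{eq_ss}. The two reference directions decouple, so it is enough to treat the $\xi$-direction term
\[
2\sum_{p_1,i} w_{p_1}D_{p_1,i}\,(\tilde f_k^{e,\#})_{(i,p_1),p_2}
\]
for a fixed row $p_2$, weighted by $w_{p_2}$. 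The $\eta$-direction term is handled identically.

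\textbf{Conservation.} The two-point contravariant flux \eqref{ec} is symmetric in $(i,p_1)$, because both $f^\#$ and the averaged metric terms are symmetric. Using this symmetry, write $2w_{p_1}D_{p_1,i}=w_{p_1}D_{p_1,i}+w_iD_{i,p_1}$ after relabeling. By \eqref{eq_sbp_1} this equals $\delta_{Np_1}\delta_{Ni}-\delta_{0p_1}\delta_{0i}$, so the double sum collapses to $(\tilde f_k^e)_{N,p_2}-(\tilde f_k^e)_{0,p_2}$. Here consistency of $\bm F^\#$ and of the metric average turns the diagonal values into the pointwise contravariant flux. These pointwise boundary values cancel against the $\tilde f_k^e$ parts of the surface terms $\Delta\tilde f_k^e$, leaving only the numerical fluxes $\tilde f_k^{e,*}$ with the signs shown in \eqref{eq_cc}.

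\textbf{Entropy.} The same symmetrization gives
\[
\sum_{p_1,i} w_{p_1}D_{p_1,i}\,\bigl((V^e)_{p_1,p_2}-(V^e)_{i,p_2}\bigr)\cdot(\tilde f^{e,\#})_{(i,p_1),p_2}
\]
plus the boundary terms $(V\cdot\tilde f^e)_{N,p_2}-(V\cdot\tilde f^e)_{0,p_2}$. The boundary terms combine with the surface terms to give $(V\cdot\tilde f^{e,*})$ at the two ends.

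In the antisymmetric part we expand the averaged metrics. The entropy conservation condition \eqref{ecx}, applied to $f^\#$ and $g^\#$ separately, gives
\[
(V_{p_1}-V_i)\cdot\tilde f^{\#}=\{\!\{y_\eta\}\!\}(\psi^f_{p_1}-\psi^f_i)-\{\!\{x_\eta\}\!\}(\psi^g_{p_1}-\psi^g_i).
\]
We then expand each product $\{\!\{a\}\!\}(b_{p_1}-b_i)$ and sum against $w_{p_1}D_{p_1,i}$. The terms $a_{p_1}b_{p_1}$ and $a_ib_{p_1}$ survive, while the terms involving $b_i$ are turned back by SBP; the summation identity \eqref{sum-1} kills the constant-in-$i$ pieces. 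What remains is the boundary contribution $\tilde\psi^{e,f}_{N,p_2}-\tilde\psi^{e,f}_{0,p_2}$ plus a residual of the form
\[
\sum_{p_1} w_{p_1}\Bigl(\psi^f_{p_1}\textstyle\sum_i D_{p_1,i}(y_\eta)_i-\psi^g_{p_1}\sum_i D_{p_1,i}(x_\eta)_i\Bigr).
\]
The analogous $\eta$-direction residual has the form $-\psi^f\sum_j D(y_\xi)+\psi^g\sum_j D(x_\xi)$.

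\textbf{Main obstacle.} The key step is showing that these geometric residuals cancel, and this is exactly where the discrete metric identities \eqref{eq_metric_id_dis} of Remark~\ref{rem:discrete_metric} enter. The $\xi$- and $\eta$-residuals combine at each node $(q_1,q_2)$ into
\[
\psi^f\bigl(D_\xi y_\eta-D_\eta y_\xi\bigr)-\psi^g\bigl(D_\xi x_\eta-D_\eta x_\xi\bigr),
\]
which vanishes by \eqref{eq_metric_id_dis}. The care needed is in the index bookkeeping: the factor $2$, the half-weights coming from the metric averages, and the signs must all be matched so that every residual carries the same weight $w_{q_1}w_{q_2}$ before the metric identities are applied. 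Assembling the boundary terms then yields \eqref{eq_ss}.
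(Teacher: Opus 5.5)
Your proposal follows the paper's proof essentially step for step. You multiply by $V_k$, symmetrize the two-point sums using SBP and consistency so the pointwise boundary values cancel the $\tilde f^e,\tilde g^e$ parts of the surface terms, and invoke \eqref{ecx} with the averaged metrics. You then use SBP and \eqref{sum-1} to reach the residuals $\sum_{p_1}w_{p_1}\bigl(\psi^f_{p_1,p_2}\sum_i D_{p_1,i}(y_\eta)_{i,p_2}-\psi^g_{p_1,p_2}\sum_i D_{p_1,i}(x_\eta)_{i,p_2}\bigr)$ and their $\eta$-direction analogues, and cancel these with \eqref{eq_metric_id_dis}.

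One sign needs fixing. That SBP step leaves $\tilde\psi^{e,f}_{0,p_2}-\tilde\psi^{e,f}_{N,p_2}$, not $\tilde\psi^{e,f}_{N,p_2}-\tilde\psi^{e,f}_{0,p_2}$; with constant metrics, $\sum_{p_1,i}w_{p_1}D_{p_1,i}(\psi_{p_1}-\psi_i)=\psi_0-\psi_N$. Only this sign yields the combinations $\sum_k(\tilde f_k^{e,*})V_k-\tilde\psi^{e,f}$ in \eqref{eq_ss}.
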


\begin{proof}
The proof closely follows the arguments presented in
\cite{fisher2012high,Fisher2013Discretely,wintermeyer2017entropy,Friedrich2018EntropyStable}.
For completeness, the proof adapted to the present notation is provided in the Appendix\eqref{sec:appendix 1}.
\end{proof}

\begin{theorem}[Multi-element entropy analysis]
Consider the Euler system \eqref{euler-c} equipped with periodic boundary conditions, and let
$\{\bm U_N^e\}_e$ denote the solution of the entropy-stable DGSEM \eqref{eq_esdgsem}.
Assume that the numerical interface flux \eqref{eq:llf_normal_flux} is entropy stable in the sense that
\begin{equation}
\label{eq:esf}    
(\bm V_R - \bm V_L)\cdot\bigl(\boldsymbol F^{*}(\bm U_L, \bm U_R)\cdot\bm n\bigr)
-
(\bm \psi_R - \bm \psi_L)\cdot\bm n \le 0,
\end{equation}
where the unit normal vector $\bm n$ points from the left state to the right state.
Then the DGSEM is entropy stable on the whole computational domain, i.e.,
\begin{equation}
\label{eq_ssg}
\sum_{e=1}^{N_e}\sum_{p_1,p_2=0}^N
w_{p_1} w_{p_2}\,
\mathcal{J}_{p_1,p_2}^e\,
\bigl(\partial_t \eta(\bm U_N^e)\bigr)_{p_1,p_2}
\le 0.
\end{equation}
\end{theorem}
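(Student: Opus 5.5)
The plan is to sum the single-element entropy identity \eqref{eq_ss} of Lemma~\ref{lma:1} over all elements. Each boundary term on the right-hand side will then be rewritten as a weighted sum over the LGL nodes of physical edges. By periodicity, every edge is shared by exactly two elements, so the contributions regroup into pairs, one pair per interface node. The claim \eqref{eq_ssg} will follow once each paired contribution is shown to be nonpositive by \eqref{eq:esf}.

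First I will recast the four face sums of \eqref{eq_ss} uniformly as
\[
-\sum_{\gamma\subset\domgr}\sum_{q=0}^N w_q\Bigl(\bm V^e\cdot\bigl(\tilde{\bm F}^{e,*}\cdot\hat{\bm n}\bigr)-\tilde{\bm\psi}^e\cdot\hat{\bm n}\Bigr)\Big|_{\gamma,q}.
\]
This uses that $\hat{\bm n}=(\pm1,0)$ or $(0,\pm1)$ on the faces, which matches the sign pattern in \eqref{eq_ss}. Next I will use \eqref{eq_contravariant_star} and the definitions of $\tilde\psi^{e,f},\tilde\psi^{e,g}$. Both the contravariant numerical flux and the contravariant potential are the physical quantities dotted with the scaled normal $\bm m^e:=\mathcal J^e(\mathcal G^e)^{-T}\hat{\bm n}$, as in \eqref{eq_map}. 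Writing $\bm m^e=|\bm m^e|\,\bm n^e$, each nodal contribution becomes
\[
-w_q|\bm m^e|\Bigl(\bm V^e\cdot(\bm F^*\cdot\bm n^e)-\bm\psi^e\cdot\bm n^e\Bigr).
\]

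The key step is matching across an interface $E=\partial\Omega_L\cap\partial\Omega_R$. Here I need the mesh to be conforming, and the LGL nodes and the discrete metric terms on $E$ to coincide from both sides. Then at each matched node the scaled normals satisfy $\bm m^R=-\bm m^L$, with a common magnitude $|\bm m|$ and $\bm n:=\bm n^L$ pointing from $L$ to $R$. The numerical flux is single-valued and consistent with this orientation, so $\bm F^*\cdot\bm n^R=-\bm F^*\cdot\bm n$. Adding the two sides gives
\[
w_q|\bm m|\Bigl[(\bm V_R-\bm V_L)\cdot(\bm F^*\cdot\bm n)-(\bm\psi_R-\bm\psi_L)\cdot\bm n\Bigr],
\]
which is $\le0$ by \eqref{eq:esf}. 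Periodic boundary edges are handled identically by identifying opposite boundary elements. This gives \eqref{eq_ssg}.

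I expect the main obstacle to be the geometric bookkeeping in this matching step. One must verify that the node-wise scaled normals computed from the two neighbouring element mappings are exactly opposite. This holds when the element maps in $\QN$ agree along shared edges, i.e. when the mesh is watertight. One must also check that the orientation conventions on the four reference faces (whether the face is traversed in $\xi$ or $\eta$, and any reversal of node ordering between neighbours) are consistent with the pairing. Everything else is a direct consequence of Lemma~\ref{lma:1} and the sign condition \eqref{eq:esf}.
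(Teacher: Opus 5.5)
Your proposal is correct and follows essentially the same route as the paper: sum the single-element balance \eqref{eq_ss}, rewrite each face term through the scaled normal of \eqref{eq_map} as a physical normal flux, pair the two traces at every interface node using periodicity, and conclude with \eqref{eq:esf}. Your version is more explicit than the paper's about the assumptions it relies on: conforming nodes and weights, opposite scaled normals from the two neighbouring element maps, and node-ordering conventions, all of which the paper uses implicitly.
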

\begin{proof}
Using the mapping relation \eqref{eq_map}, the single-element entropy balance \eqref{eq_ss} can be expressed in the physical domain as
\begin{equation*}
(\eta(\bm U^e), 1)_{\Omega_e,h}
=
-\left\langle
\sum_{k=1}^4 \bigl(\bm F_k^* \cdot \bm n\bigr) V_k
-
\bm \psi \cdot \bm n,
\, 1
\right\rangle_{\partial\Omega_e,h},
\end{equation*}
where $(\cdot,\cdot)_{\Omega_e,h}$ denotes the mapped LGL quadrature on the physical element $\Omega_e$, and
$\langle\cdot,\cdot\rangle_{\partial\Omega_e,h}$ denotes the corresponding mapped LGL quadrature on the element boundary $\partial\Omega_e$.

Summing over all elements $\{\Omega_e\}_{e=1}^{N_e}$ and invoking periodic boundary conditions yields
\begin{equation*}
\sum_{e=1}^{N_e} (\eta(\bm U^e), 1)_{\Omega_e,h}
=
\sum_{E\in\mathcal{E}_h^o}
\left\langle
(\bm V_R - \bm V_L)\cdot\bigl(\boldsymbol F^*(\bm U_L, \bm U_R)\cdot\bm n\bigr)
-
(\bm \psi_R - \bm \psi_L)\cdot\bm n,
\, 1
\right\rangle_{E,h},
\end{equation*}
where $\mathcal{E}_h^o$ denotes the set of interior interfaces of the mesh $\Omega_h$.
Here, $\bm U_{L/R}$, $\bm V_{L/R}$, and $\bm \psi_{L/R}$ denote the left and right traces of the conservative variables, entropy variables, and entropy potentials, respectively, on an interface $E$.

By the entropy stability assumption \eqref{eq:esf} on the numerical flux, each interface contribution on the right-hand side is non-positive. Consequently, the global discrete entropy inequality \eqref{eq_ssg} holds, which completes the proof.
\end{proof}
\begin{remark}[Entropy stability of Lax--Friedrichs flux]
The local Lax--Friedrichs flux \eqref{eq:llf_normal_flux} is entropy stable provided that the dissipation parameter $\alpha$ is no smaller than the maximum wave speed of the exact Riemann problem at the interface; see \cite{chen2017entropy} for a detailed discussion.
Since computing the exact wave speed is nontrivial \cite{GuermondPopov16}, we instead adopt the practical wave-speed estimate given in \eqref{alpha-x}.
While this choice cannot be theoretically proven to be entropy stable in the sense of \eqref{eq:esf}, it has been observed to behave in an entropy-stable manner in practice.
\end{remark}

We write the semidiscrete entropy-stable DGSEM formulation \eqref{eq_esdgsem} in the abstract form
\begin{equation}
\label{eq_semi_dis}
\frac{d\bm U_h}{dt}
=
\bm L_h(\bm U_h),
\end{equation}
where the solution \(\bm U_h \in [V_h]^4\), and \(V_h\) is the global discontinuous finite element space defined by
\[
V_h := \left\{
v \in L^2(\Omega) \;:\;
\left. v \right|_{\Omega^e} \circ \bm \Phi^e \in \mathbb{Q}_N(\Omega_{\mathrm{ref}}),
\quad \forall \Omega^e \in \Omega_h
\right\}.
\]
Here, \(\bm \Phi^e : \Omega_{\mathrm{ref}} \to \Omega^e\) denotes the element mapping. The operator \(\bm L_h(\cdot)\) represents the spatial discretization defined by the entropy-stable DGSEM formulation.

To advance the semidiscrete system \eqref{eq_semi_dis} in time, we employ the
third-order strong stability preserving Runge--Kutta (SSP-RK3) method
\cite{gottlieb2001strong}.
Given the numerical solution $\bm U_h^{n}$ at time level $t^n$, the solution
at the next time level $t^{n+1}=t^n+\Delta t$ is obtained through the following
stages:
\begin{equation}
\label{eq_RK3}
\begin{aligned}
& \bm U_h^{(1)}
=
\bm U_h^{n}
+
\Delta t\,\bm L_h(\bm U_h^{n}), \\
& \bm U_h^{(2)}
=
\dfrac{3}{4}\bm U_h^{n}
+
\dfrac{1}{4}\bm U_h^{(1)}
+
\dfrac{\Delta t}{4}\bm L_h(\bm U_h^{(1)}), \\
& \bm U_h^{(3)}
=
\dfrac{1}{3}\bm U_h^{n}
+
\dfrac{2}{3}\bm U_h^{(2)}
+
\dfrac{2\Delta t}{3}\bm L_h(\bm U_h^{(2)}), \\
& \bm U_h^{n+1}
=
\bm U_h^{(3)}.
\end{aligned}
\end{equation}

\begin{remark}[Positivity preservation]
Due to the use of the Lax--Friedrichs numerical flux \eqref{eq:llf_normal_flux} in the scheme \eqref{eq_esdgsem}, it can be shown that each stage of the above Runge--Kutta method preserves the positivity of the cell averages of density and pressure under a standard CFL-type time step restriction, following the work of Zhang and Shu \cite{zhang2010positivity}; see also \cite{chen2017entropy}. Consequently, a positivity-preserving scaling limiter \cite{zhang2010positivity} can be applied to enforce the positivity of density and pressure at all nodal degrees of freedom.
\end{remark}

\section{Oscillation-Eliminating DG (OEDG)}
\label{sec:oedg}

As discussed in the introduction, high-order discontinuous Galerkin
approximations may exhibit spurious oscillations in the vicinity of
discontinuities. To mitigate these nonphysical oscillations while
preserving high-order accuracy in smooth regions, we use the oscillation-eliminating (OE) procedure applied in a post-processing fashion \cite{peng2025oedg}.

Let 
$$\mathbb Q_N(\Omega_{e}) := \{v\in L^2(\Omega_e): v\circ \bm \Phi^e \in \mathbb Q_N(\Omega_{ref}) \} $$ denote the mapped tensor-product
polynomial space of degree $N$ on the physical element $\Omega_e$.
We define an element-wise linear operator
\[
\mathcal F_\tau^e : \mathbb Q_N(\Omega_e)
\longrightarrow \mathbb Q_N(\Omega_e),
\]
which acts on each component of the DG solution independently.
For a solution vector $\bm U_h^e$ restricted to an element
$\Omega_e$, we denote by
$U_{\sigma}^{e,k}(\hat t) = \mathcal F_{\hat t}^e(U_h^{e,k})$
the OE-processed solution associated with the $k$-th component,
$k = 1,\dots,4$, where $\hat t$ is a pseudo time.

The operator $\mathcal F_{\hat t}^e$ is defined as the solution of the
following initial value problem: for any element $\Omega_e$ and each
component $k=1,\dots,4$,
\begin{equation}
\label{eq:oe_ode}
\left\{
\begin{aligned}
&\frac{d}{d\hat t}
\int_{\Omega_e} U_{\sigma}^{e,k} \, v \, d\bm x
+ \sum_{m=0}^{N}
\delta_m^{(e)}(\bm U_h^e)
\int_{\Omega_e}
\bigl(
U_{\sigma}^{e,k} - P^{m-1} U_{\sigma}^{e,k}
\bigr) v \, d\bm x
= 0,
\quad \forall v \in \mathbb Q_N(\Omega_e), \\
& U_{\sigma}^{e,k}(\hat t = 0) = U_h^{e,k}.
\end{aligned}
\right.
\end{equation}

Here $\{\delta_m^{(e)}(\bm U_h^e)\}_{m=0}^N$ are nonnegative
strength factors that control the amount of modal damping at different
polynomial levels; their precise definition will be given later.
$P^m w\in \mathbb Q_m(\Omega_e)$ denotes the $L^2$ projection onto the polynomial
space $\mathbb Q_m(\Omega_e)$ for $0 \le m \le N-1$, defined by
\begin{equation}
\label{eq:oe_projection}
\begin{aligned}
% &\text{for any } w, \quad P^m w \in \mathbb Q_m(\Omega_e)
% \text{ satisfies} \\
&\int_{\Omega_e} (P^m w - w)\, v \, d\bm x = 0,
\quad \forall v \in \mathbb Q_m(\Omega_e),
\end{aligned}
\end{equation}
and we set $P^{-1} := P^0$ for notational convenience.

The evolution problem \eqref{eq:oe_ode} is linear and can be solved
exactly, making the OE procedure computationally efficient.
The specific implementation of the OE operator differs between
Cartesian meshes and general curvilinear meshes due to the presence
of metric terms. These two cases will be discussed separately in the
following subsections.
\subsection{OEDG on Cartesian Grids}

We first consider the implementation of the OE procedure on Cartesian
meshes, where orthogonal modal bases can be exploited to obtain a
closed-form expression of the operator $\mathcal F_{\hat t}$.
The presentation follows the general framework of
\cite{peng2025oedg}, with several modifications tailored to the
Cartesian setting.

Let $\{\psi_i(\xi)\}_{i=0}^N$ denote a set of normalized orthogonal
polynomials on $[-1,1]$, such as normalized Legendre polynomials.
A tensor-product orthogonal basis on the reference element
$\Omega_{\mathrm{ref}}=[-1,1]^2$ is then given by
\[
\Psi_{i,j}(\xi,\eta) := \psi_i(\xi)\psi_j(\eta),
\qquad 0 \le i,j \le N.
\]

Using this modal basis, the OE-processed solution
$U_{\sigma}^{e,k}$ defined by \eqref{eq:oe_ode} can be expanded as
\begin{equation}
\label{eq:oe_modal_expansion}
U_{\sigma}^{e,k}(\hat t,\xi,\eta)
=
\sum_{i,j=0}^N
(\hat U_{\sigma}^{e,k})_{i,j}(\hat t)\,
\Psi_{i,j}\bigl(\xi,\eta\bigr),
\end{equation}
where the expansion is performed on the reference element, and the
coordinate mapping $(x^e(\xi),y^e(\eta))$ is omitted for brevity.

Due to the hierarchical structure of orthogonal polynomial spaces,
the difference between $U_{\sigma}^{e,k}$ and its $L^2$ projection
onto $\mathbb Q_{m-1}$ admits the representation
\begin{equation}
\label{eq:oe_modal_truncation}
U_{\sigma}^{e,k} - P^{m-1} U_{\sigma}^{e,k}
=
\sum_{\max(i,j)\ge \max\{m,1\}}
(\hat U_{\sigma}^{e,k})_{i,j}(\hat t)\,
\Psi_{i,j},
\qquad m \ge 1.
\end{equation}

Substituting \eqref{eq:oe_modal_truncation} into
\eqref{eq:oe_ode} and choosing the test function
$v=\Psi_{i,j}$ yields a decoupled system of ordinary differential
equations for the modal coefficients:
\begin{equation}
\label{eq:oe_modal_ode}
\begin{aligned}
&\frac{d}{d\hat t}(\hat U_{\sigma}^{e,k})_{i,j}
+ \Bigl(\sum_{m=0}^{\max(i,j)} \delta_m^{(e)}\Bigr)
(\hat U_{\sigma}^{e,k})_{i,j}
= 0,
\qquad \forall\, \max(i,j)\ge 1, \\
&\frac{d}{d\hat t}(\hat U_{\sigma}^{e,k})_{0,0} = 0.
\end{aligned}
\end{equation}

Evaluating the solution at pseudo time $\hat t=\Delta t$, the above
system can be solved exactly as
\begin{equation}
\label{eq:oe_coeff_cart}
\begin{aligned}
(\hat U_{\sigma}^{e,k})_{i,j}(\Delta t)
&=
\exp\!\Bigl(
-\Delta t \sum_{m=0}^{\max(i,j)} \delta_m^{(e)}
\Bigr)
(\hat U_{h}^{e,k})_{i,j},
\qquad \max(i,j)\ge 1, \\
(\hat U_{\sigma}^{e,k})_{0,0}(\Delta t)
&=
(\hat U_{h}^{e,k})_{0,0}.
\end{aligned}
\end{equation}

Consequently, the OE operator on Cartesian grids admits the explicit
representation
\begin{equation}
\label{eq:oe_operator_cart}
\mathcal F_{\Delta t}(U_h^{e,k})
=
(\hat U_h^{e,k})_{0,0}\,\Psi_{0,0}
+
\sum_{\max(i,j)\ge 1}
\exp\!\Bigl(
-\Delta t \sum_{m=0}^{\max(i,j)} \delta_m^{(e)}
\Bigr)
(\hat U_h^{e,k})_{i,j}\,\Psi_{i,j}.
\end{equation}

We define the damping coefficients $\delta_m^{(e)}$ by
\begin{equation}
\label{eq:oe_delta_cart}
\delta_m^{(e)}
=
\frac{\beta_e}{h_e}
\sum_{f\in\partial\Omega_e}
\sigma_m^{(f)}(\bm U_h^e),
\end{equation}
where $\beta_e$ is an estimate of the local maximum wave speed on
$\Omega_e$, and $h_e$ denotes a characteristic element length scale,
chosen as the radius of the inscribed circle.

For the compressible Euler equations, we take
$\beta_e = \|\bar{\bm u}\| + \bar c$, where $\bar{\bm u}$ and $\bar c$
are the cell-averaged velocity and speed of sound, respectively.
The face-based indicator is defined as
\begin{equation}
\label{eq:oe_sigma_face}
\sigma_m^{(f)}(\bm U_h^e)
=
\max_{1\le k\le d}
\sigma_m^{(f)}(U_h^{e,k}),
\qquad f\in\partial\Omega_e,
\end{equation}
with
\begin{equation}
\label{eq:oe_damping_cart}
\sigma_m^{(f)}(U_h^{e,k})
=
\begin{cases}
0,
& U_h^{e,k} = \bar U_h^{e,k}, \\[1.2ex]
\dfrac{(2m+1)h_e^{m}}{2(2N-1)m!}
\Biggl[
\displaystyle
\sum_{n\le m}
\dfrac{
\frac{1}{|f|}
\int_f
\bigl|
\llbracket \partial_x^n U_h^{e,k} \rrbracket_f
\bigr|
\, dS
}{
\|U_h^{e,k}-\bar U_h^{e,k}\|_{L^\infty(\Omega_e)}
}
+
\sum_{n\le m}
\dfrac{
\frac{1}{|f|}
\int_f
\bigl|
\llbracket \partial_y^n U_h^{e,k} \rrbracket_f
\bigr|
\, dS
}{
\|U_h^{e,k}-\bar U_h^{e,k}\|_{L^\infty(\Omega_e)}
}
\Biggr],
& \text{otherwise}.
\end{cases}
\end{equation}

\begin{remark}[OE scale factor $s$] 
In practice, the damping induced by
\eqref{eq:oe_coeff_cart} may be overly strong for certain problems.
To improve accuracy, we introduce a scaling factor $s\in(0,1)$ and
replace the exponential factor in \eqref{eq:oe_coeff_cart} by
\begin{equation}
\label{eq:oe scale factor}
   \exp\!\Bigl(
- s\,\Delta t \sum_{m=0}^{\max(i,j)} \delta_m^{(e)}
\Bigr). 
\end{equation}
and we will call it OE scale factor $s$ in the following.
\end{remark}

\begin{remark}[Conservation of OE procedure]
Since the modal coefficient $(\hat U_h^{e,k})_{0,0}$ is left unchanged,
the OE procedure preserves the cell average of each conserved
variable, which is a crucial property for stability and conservation.
\end{remark}

\begin{remark}[Modifications tailored to the Cartesian setting]
Unlike \cite{peng2025oedg}, where directional derivatives in all
orientations are employed, the Cartesian grid structure allows us to
restrict \eqref{eq:oe_damping_cart} to derivatives aligned with the
coordinate directions.
\end{remark}

\subsection{OEDG on Curvilinear Grids}
\subsubsection{Exact solution of \eqref{eq:oe_ode}}

On general curvilinear meshes, constructing a local orthogonal modal
basis on each element $\Omega_e$ is impractical due to the metric terms. Nevertheless, the
projection operators $P^m$ defined in \eqref{eq:oe_projection} remain
well-defined on $\Omega_e$. We approximate the integrals in \eqref{eq:oe_projection}
using the LGL rule of degree $N$ \eqref{eq_quad_rule}.
We therefore derive an explicit expression
for the OE operator $\mathcal F_{\Delta t}^e$ using only the nested
structure of polynomial spaces.

Recall the hierarchy
\[
\mathbb Q_0(\Omega_e)\subset \mathbb Q_1(\Omega_e)\subset \cdots \subset \mathbb Q_N(\Omega_e).
\]
For any $u\in \mathbb Q_N(\Omega_e)$, define the incremental
(projection-difference) components
\begin{equation}
\label{eq:oe_curve_decomp}
u_0 := P^0 u, 
\qquad
\Delta_k := P^k u - P^{k-1}u,\quad k=1,\dots,N,
\end{equation}
so that the telescoping decomposition holds:
\begin{equation}
\label{eq:oe_curve_telescoping}
u = u_0 + \sum_{k=1}^N \Delta_k.
\end{equation}
Note that $\Delta_k \in \mathbb Q_k(\Omega_e)$ and
$P^{k-1}\Delta_k = 0$, hence $\Delta_k$ lies in the complement
$\mathbb W_k := \mathbb Q_k \ominus \mathbb Q_{k-1}$ in the $L^2$
sense. The following  lemmas summary some useful properties of $\Delta_k$.

\begin{lemma}
\label{lemma:oe_orthogonality}
For each $k=1,\dots,N$, the increment $\Delta_k$ is $L^2$-orthogonal
to $\mathbb Q_{k-1}(\Omega_e)$, i.e.,
\[
\int_{\Omega_e} \Delta_k\, v \, d\bm x = 0,
\qquad \forall v\in \mathbb Q_{k-1}(\Omega_e).
\]
\end{lemma}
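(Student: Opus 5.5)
The plan is to use only the defining property \eqref{eq:oe_projection} of the $L^2$ projections together with the nesting $\mathbb Q_{k-1}(\Omega_e)\subset\mathbb Q_k(\Omega_e)$.

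Fix $k\in\{1,\dots,N\}$ and take any $v\in\mathbb Q_{k-1}(\Omega_e)$. By definition \eqref{eq:oe_curve_decomp}, $\Delta_k=P^k u-P^{k-1}u$, and we split
\[
\int_{\Omega_e}\Delta_k v\,d\bm x=\int_{\Omega_e}(P^k u-u)\,v\,d\bm x-\int_{\Omega_e}(P^{k-1}u-u)\,v\,d\bm x .
\]
The second integral vanishes directly by \eqref{eq:oe_projection} with $m=k-1$, since $v\in\mathbb Q_{k-1}(\Omega_e)$. For $k-1\ge 0$ this is a genuine projection; the convention $P^{-1}:=P^0$ is not needed here because $k\ge1$. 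For the first integral, we use the nesting $\mathbb Q_{k-1}(\Omega_e)\subset\mathbb Q_k(\Omega_e)$. This holds on the mapped element because $v\circ\bm\Phi^e\in\mathbb Q_{k-1}(\omgr)\subset\mathbb Q_k(\omgr)$. Hence $v$ is an admissible test function in \eqref{eq:oe_projection} with $m=k$, and that integral vanishes as well. The case $k=N$ needs one remark: there $P^N$ is the identity on $\mathbb Q_N(\Omega_e)$, or equivalently $P^N u=u$ because $u\in\mathbb Q_N(\Omega_e)$ and the projection is unique. So the first term is zero trivially.

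The only point requiring care is which inner product is used. The paper replaces the integrals in \eqref{eq:oe_projection} by the mapped LGL quadrature, $(\cdot\,\mathcal J^e,\cdot)_{\omgr,w}$. The argument above uses nothing but bilinearity and the defining orthogonality relation of each $P^m$. It therefore holds verbatim when every $\int_{\Omega_e}$, including the one in the statement, is read as that discrete inner product. The only requirement is that the same inner product be used consistently for all $m$.

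Well-definedness of $P^m$ under the discrete inner product requires positive definiteness on $\mathbb Q_N(\Omega_e)$. This follows from $w_i>0$ and $\mathcal J^e>0$ at the nodes, together with the fact that a $\mathbb Q_N$ polynomial vanishing at all LGL nodes is zero. I expect no real obstacle beyond stating this consistency remark.
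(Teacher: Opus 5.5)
Your proof is correct and essentially matches the paper's reasoning. The paper gives no separate proof and relies only on the preceding remark that $P^{k-1}\Delta_k=0$, i.e.\ $P^{k-1}P^k=P^{k-1}$ for the nested spaces. Your splitting $\Delta_k=(P^ku-u)-(P^{k-1}u-u)$, combined with $\mathbb Q_{k-1}(\Omega_e)\subset\mathbb Q_k(\Omega_e)$, is exactly the direct verification of that fact.

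Your side remarks are accurate and fill in details the paper leaves implicit. $P^N$ must be read as the identity on $\mathbb Q_N(\Omega_e)$, since the paper defines $P^m$ only for $m\le N-1$. With LGL-approximated projections, the integral in the statement must be read as the same mapped quadrature inner product, which is positive definite on $\mathbb Q_N(\Omega_e)$ because $w_i>0$ and $\mathcal J^e>0$ at the nodes.
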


\begin{lemma}
\label{lemma:oe_projection_action}
For $k=1,\dots,N$ and any $m\ge 0$,
\[
(I-P^{m-1})\Delta_k =
\begin{cases}
0, & m-1 \ge k,\\
\Delta_k, & m \le k,
\end{cases}
\qquad\text{and}\qquad
(I-P^{m-1})u_0 = 0.
\]
\end{lemma}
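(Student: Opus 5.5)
The plan is to use only two facts about the projections $P^m$ of \eqref{eq:oe_projection}, with the integrals understood in the discrete LGL inner product as stated in the paper. First, $P^m$ is the orthogonal projection onto $\mathbb Q_m(\Omega_e)$, so $P^m v = v$ for every $v\in\mathbb Q_m(\Omega_e)$. Second, the spaces are nested, and nested orthogonal projections compose as $P^jP^k = P^kP^j = P^{\min(j,k)}$. I will justify the second fact directly. Suppose $j\le k$. Then $P^j u\in\mathbb Q_j\subset\mathbb Q_k$, so $P^kP^ju=P^ju$. In the other direction, for any $v\in\mathbb Q_j\subset\mathbb Q_k$ we have $\int(P^ku-u)v=0$, hence $\int(P^ku - P^ju)v=0$. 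By uniqueness of the projection (the inner product is positive definite on $\mathbb Q_N$), this gives $P^jP^ku=P^ju$. These uses apply equally to the convention $P^{-1}:=P^0$.

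\emph{Case $m-1\ge k$.} Here $\Delta_k = P^ku-P^{k-1}u\in\mathbb Q_k\subset\mathbb Q_{m-1}$. By the first fact, $P^{m-1}\Delta_k=\Delta_k$, and therefore $(I-P^{m-1})\Delta_k=0$.

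\emph{Case $m\le k$.} Here $m-1\le k-1$. If $m\ge1$, the composition rule gives
\[
P^{m-1}\Delta_k = P^{m-1}u - P^{m-1}u = 0,
\]
so $(I-P^{m-1})\Delta_k=\Delta_k$. If $m=0$, then $P^{-1}=P^0$ and $0\le k-1$ because $k\ge1$, so the same computation yields $P^0\Delta_k = P^0u-P^0u=0$. Equivalently, this case follows at once from Lemma~\ref{lemma:oe_orthogonality}: $\Delta_k$ is orthogonal to $\mathbb Q_{m-1}\subseteq\mathbb Q_{k-1}$, so its projection onto $\mathbb Q_{m-1}$ vanishes.

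\emph{The term $u_0$.} We have $u_0=P^0u\in\mathbb Q_0\subset\mathbb Q_{m-1}$ for every $m\ge0$, again using $P^{-1}=P^0$ when $m=0$. Hence $P^{m-1}u_0=u_0$, and $(I-P^{m-1})u_0=0$.

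The only subtle point is the boundary case $m=0$ with the convention $P^{-1}:=P^0$, which I check separately as above. A second point to confirm is that replacing exact integrals by the LGL rule still gives an inner product on $\mathbb Q_N(\Omega_e)$. It does, because the weights $w_{q_1}w_{q_2}\mathcal J^e_{q_1,q_2}$ are positive and the nodal values determine the polynomial, so the projections remain well defined and orthogonal.
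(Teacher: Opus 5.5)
Your proof is correct and follows the paper's argument. Both use the fact that $P^{m-1}$ fixes elements of $\mathbb Q_{m-1}$, together with the nested-projection identity $P^{m-1}P^k u = P^{m-1}P^{k-1}u = P^{m-1}u$ for $m\le k$, and both treat $u_0\in\mathbb Q_0$ the same way. You also spell out details the paper leaves implicit: the composition rule via uniqueness, the $m=0$ case under $P^{-1}:=P^0$, and the positive definiteness of the Jacobian-weighted LGL inner product on $\mathbb Q_N(\Omega_e)$.
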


\begin{proof}
Since $\Delta_k = P^k u - P^{k-1}u$ and $P^{m-1}$ is a projection,
if $m-1\ge k$ then $P^{m-1}P^k u=P^k u$ and $P^{m-1}P^{k-1}u=P^{k-1}u$,
hence $P^{m-1}\Delta_k=\Delta_k$ and $(I-P^{m-1})\Delta_k=0$.

If $m\le k$, then $P^{m-1}P^k u=P^{m-1}u=P^{m-1}P^{k-1}u$, so
$P^{m-1}\Delta_k=0$ and $(I-P^{m-1})\Delta_k=\Delta_k$.

Finally, $u_0\in\mathbb Q_0$ implies $P^{m-1}u_0=u_0$ for all $m$,
hence $(I-P^{m-1})u_0=0$.
\end{proof}

\medskip
We now derive the explicit solution of the OE evolution
\eqref{eq:oe_ode}. Consider a scalar component on $\Omega_e$ and write
$u_\sigma(\hat t)=\mathcal F_{\hat t}(u)$.
Substituting \eqref{eq:oe_curve_telescoping} into \eqref{eq:oe_ode}
and using Lemma~\ref{lemma:oe_projection_action} gives, for all
$v\in\mathbb Q_N(\Omega_e)$,
\begin{equation}
\label{eq:oe_curve_weak_simplified}
\frac{d}{d\hat t}\int_{\Omega_e}
\Bigl(u_0+\sum_{k=1}^N \Delta_k(\hat t)\Bigr)v\,d\bm x
+
\sum_{k=1}^N
\Bigl(\sum_{m=0}^k \delta_m^{(e)}(u)\Bigr)
\int_{\Omega_e}\Delta_k(\hat t)\,v\,d\bm x
=0.
\end{equation}

The key observation is that \eqref{eq:oe_curve_weak_simplified}
decouples across the nested subspaces. Fix $\ell\in\{0,1,\dots,N\}$ and
choose $v\in\mathbb Q_\ell(\Omega_e)$. By
Lemma~\ref{lemma:oe_orthogonality}, we have
$\int_{\Omega_e}\Delta_k v\,d\bm x=0$ for all $k\ge \ell+1$, hence
\eqref{eq:oe_curve_weak_simplified} reduces to
\begin{equation}
\label{eq:oe_curve_restricted}
\frac{d}{d\hat t}\int_{\Omega_e}
\Bigl(u_0+\sum_{k=1}^{\ell} \Delta_k(\hat t)\Bigr)v\,d\bm x
+
\sum_{k=1}^{\ell}
\Bigl(\sum_{m=0}^k \delta_m^{(e)}(u)\Bigr)
\int_{\Omega_e}\Delta_k(\hat t)\,v\,d\bm x
=0,
\quad \forall v\in\mathbb Q_\ell(\Omega_e).
\end{equation}

Subtracting the same identity written for $\ell-1$ from that for $\ell$
and using the fact that $\mathbb Q_{\ell-1}\subset\mathbb Q_\ell$
yields an evolution equation on the increment subspace
$\mathbb W_\ell$. Equivalently, \eqref{eq:oe_ode} induces the
following \emph{strong} ODE for each increment $\Delta_\ell$:
\begin{equation}
\label{eq:oe_delta_ode}
\frac{d}{d\hat t}\Delta_\ell(\hat t)
+
\Bigl(\sum_{m=0}^{\ell}\delta_m^{(e)}(u)\Bigr)
\Delta_\ell(\hat t)=0,
\qquad \ell=1,\dots,N,
\end{equation}
together with $\frac{d}{d\hat t}u_0=0$.
Therefore,
\begin{equation}
\label{eq:oe_curve_solution}
u_\sigma(\Delta t)
=
u_0
+
\sum_{k=1}^N
\exp\!\Bigl(
-\Delta t\sum_{m=0}^{k}\delta_m^{(e)}(u)
\Bigr)\,
\Delta_k.
\end{equation}
In particular, the constant component $u_0=P^0u$ (hence the cell
average) is preserved exactly.

Finally, for the Euler system on $\Omega_e$, the operator is applied
component-wise. Denoting $\bm U_\sigma^e=\mathcal F_{\Delta t}(\bm U_h^e)$,
we have for each component $r=1,\dots,4$,
\begin{equation}
\label{eq:oe_curve_apply_damp}
U_{\sigma}^{e,r}
=
P^0(U_h^{e,r})
+
\sum_{k=1}^N
\exp\!\Bigl(
-\Delta t\sum_{m=0}^{k}\delta_m^{(e)}(\bm U_h^e)
\Bigr)\,
\bigl(P^k(U_h^{e,r})-P^{k-1}(U_h^{e,r})\bigr).
\end{equation}

\subsubsection{Damping coefficients}

The damping coefficients $\delta_m^{(e)}$ appearing in
\eqref{eq:oe_curve_apply_damp} are constructed from face-based jump
indicators. As in the Cartesian case, we first define for each face
$f\subset\partial\Omega_e$,
\begin{equation}
\label{eq:oe_sigma_face_curve}
\sigma_m^{(f)}(\bm U_h^e)
=
\max_{1\le r\le 4}\sigma_m^{(f)}(U_h^{e,r}),
\qquad f\in\partial\Omega_e,
\end{equation}
where $U_h^{e,r}$ denotes the $r$-th component of the Euler state.

For each scalar component $U_h^{e,r}$, we set
\begin{equation}
\label{eq:oe_damping_coeff_curve}
\sigma_m^{(f)}(U_h^{e,r})
=
\begin{cases}
0,
& U_h^{e,r} = \bar U_h^{e,r}, \\[1.2ex]
\dfrac{(2m+1)\,h_e^{m}}{2(2N-1)\,m!}
\displaystyle\sum_{|\bm\alpha|\le m}
\dfrac{
\frac{1}{|f|}
\int_{f}
\Bigl|\llbracket \partial^{\bm\alpha}U_h^{e,r}\rrbracket_f\Bigr|
\, dS
}{
\|U_h^{e,r}-\bar U_h^{e,r}\|_{L^{\infty}(\Omega_e)}
},
& \text{otherwise},
\end{cases}
\end{equation}
where $\bm\alpha=(\alpha_1,\alpha_2)$ is a multi-index with
$|\bm\alpha|=\alpha_1+\alpha_2$, and
\[
\partial^{\bm\alpha}U
:=
\frac{\partial^{|\bm\alpha|}U}{\partial x^{\alpha_1}\,\partial y^{\alpha_2}}.
\]
All remaining quantities (jump operator $\llbracket\cdot\rrbracket_f$,
face measure $|f|$, and element length scale $h_e$) follow the
definitions in \eqref{eq:oe_damping_cart}.

\subsubsection{Shock indicator and selective application of OE}

On curvilinear meshes, evaluating \eqref{eq:oe_damping_coeff_curve} can
be computationally expensive because it requires mixed derivatives up to
order $m$ on every face and for every state component. In our
experiments, the OE stage may account for a substantial fraction of the
overall runtime. To reduce the cost, we apply OE only on
\emph{troubled elements} detected by a shock indicator.

Among many available indicators
\cite{krivodonova2004shock,fu2017new,qiu2005comparison}, we adopt a
low-cost choice derived from the zero-order jump measure in
\eqref{eq:oe_damping_coeff_curve}. Specifically, for each element
$\Omega_e$ we define
\begin{equation}
\label{eq:oe_indicator}
\mathcal I^{(e)}(\bm U_h^e)
=
\sum_{f\in\partial\Omega_e}\sigma_0^{(f)}(\bm U_h^e),
\end{equation}
with $\sigma_0^{(f)}(\bm U_h^e)$ given by \eqref{eq:oe_sigma_face_curve}
at $m=0$, i.e.,
\begin{equation}
\label{eq:oe_indicator_explicit}
\sigma_0^{(f)}(U_h^{e,r})
=
\begin{cases}
0,
& U_h^{e,r} = \bar U_h^{e,r}, \\[1.2ex]
\dfrac{1}{2(2N-1)}
\dfrac{
\frac{1}{|f|}\int_{f}\bigl|\llbracket U_h^{e,r}\rrbracket_f\bigr|\,dS
}{
\|U_h^{e,r}-\bar U_h^{e,r}\|_{L^{\infty}(\Omega_e)}
},
& \text{otherwise}.
\end{cases}
\end{equation}
Elements satisfying the following conditions are marked as troubled and subjected to the full OE operator:
\begin{equation}
\label{eq:oe C}
   \mathcal I^{(e)}(\bm U_h^e)>C 
\end{equation}
where constant $C$ is named as shock indicator threshold in the following. Otherwise we set
$\bm U_\sigma^e=\bm U_h^e$.

\begin{algorithm}[H]\small
  \DontPrintSemicolon
  \SetKwInOut{Input}{Input}\SetKwInOut{Output}{Output}

  \Input{DG solution $\{\bm U_h^e\}_e$; threshold constant $C>0$.}
  \Output{OE-processed solution $\{\bm U_{\sigma}^e\}_e$.}
  \BlankLine

  \ForEach{element $\Omega_e$}{
    Compute $\mathcal I^{(e)}(\bm U_h^e)$ from \eqref{eq:oe_indicator}--\eqref{eq:oe_indicator_explicit}\;
    \eIf{$\mathcal I^{(e)}(\bm U_h^e) > C$}{
      Compute $\sigma_m^{(f)}(\bm U_h^e)$ for $m=1,\dots,N$ via \eqref{eq:oe_damping_coeff_curve}\;
      Form $\delta_m^{(e)}$ and apply \eqref{eq:oe_curve_apply_damp} to obtain $\bm U_\sigma^e$\;
    }{
      Set $\bm U_\sigma^e \leftarrow \bm U_h^e$\;
    }
  }
  \caption{Selective OE procedure on curvilinear meshes using a shock indicator.}
  \label{alg:oe_curve}
\end{algorithm}

\begin{remark}[Comparision with KXRCF indicator]
The indicator \eqref{eq:oe_indicator} is closely related in structure to
jump-based sensors such as the KXRCF indicator \cite{krivodonova2004shock}
and several recent variants \cite{wei2025jump}. This motivates the use
of $\sigma_0^{(f)}$ as a cost-effective troubled-cell detector.
\end{remark}
%%%%%%%%%%%%%%%%%%%%%%%%%%%%%%%%%%%%%%%%%%%%%%%%%%%%%
%%%%%%%%%%%%%%%%%%%%%%%%%%%%%%%%%%%%%%%%%%%%%%%%%%%%%

%%%%%%%%%%%%%%%%%%%%%%%%%%%%%%%%%%%%%%%%%%%%%%%%%%%%%
%%%%%%%%%%%%%%%%%%%%%%%%%%%%%%%%%%%%%%%%%%%%%%%%%%%%%
%%%%%%%%%%%%%%%%%%%%%%%%%%%%%%%%%%%%%%%%%%%%%%%%%%%%%
%%%%%%%%%%%%%%%%%%%%%%%%%%%%%%%%%%%%%%%%%%%%%%%%%%%%%

%%%%%%%%%%%%%%%%%%%%%%%%%%%%%%%%%%%%%%%%%%%%%%%%%%%%%
%%%%%%%%%%%%%%%%%%%%%%%%%%%%%%%%%%%%%%%%%%%%%%%%%%%%%
%%%%%%%%%%%%%%%%%%%%%%%%%%%%%%%%%%%%%%%%%%%%%%%%%%%%%
%%%%%%%%%%%%%%%%%%%%%%%%%%%%%%%%%%%%%%%%%%%%%%%%%%%%%
\section{Numerical Examples}
\label{sec:numerical}

In this section, we use multiple numerical examples to assess the performance of the proposed
entropy-stable DG scheme equipped with the oscillation-eliminating (OE)
procedure on both Cartesian and curvilinear meshes. Unless otherwise
specified, a positivity-preserving limiter
\cite{zhang2010positivity,zhang2017positivity} is applied in all
numerical experiments to guarantee non-negativity of the density and
pressure. The code implementation is based on \texttt{MFEM} library \cite{anderson2021mfem, andrej2024high} for all the simulations.

Time integration is performed using the third-order strong-stability-preserving Runge--Kutta method \eqref{eq_RK3}. The time step size
$\Delta t$ is determined according to the CFL condition
\begin{equation}
\label{eq:cfl}
\Delta t
=
K\,
\min_{\Omega_e} \frac{ h_e}{\bigl(\|\bm u_e\|+c_e\bigr)},
\end{equation}
where $K$ is the CFL constant, 
$h_e$ denotes a characteristic element length scale (taken as the
radius of the inscribed circle of $\Omega_e$), and $\bm u_e$ and $c_e$
are the local velocity vector and speed of sound on element $\Omega_e$,
respectively. In practice, $\bm u_e$ and $c_e$ are evaluated by taking
the maximum over the quadrature points within each element.
% {\color{red}
% For standard DG discretizations on Cartesian meshes, the CFL constant
% $K$ is typically chosen as
% $K=\alpha/(2N+1)$, where $N$ is the polynomial degree and $\alpha$ is a
% problem-dependent constant. In the present setting, however, the use of
% curvilinear elements, the OE procedure, and the positivity-preserving
% limiter precludes an explicit a priori formula for $K$. We therefore
% adopt an adaptive strategy similar to that proposed in
% \cite{zhang2017positivity}. Specifically, an initial CFL constant is
% chosen, and the time step \eqref{eq:cfl} is computed at each stage. If
% the positivity-preserving limiter fails due to the occurrence of
% negative cell averages of density or pressure at any Runge--Kutta
% substep, the current time step is rejected and recomputed using $\Delta t/2$.
% %The CFL constant $K$, the OE scaling factor $s$ in \eqref{eq:oe scale factor}, and the shock indicator threshold $C$ in \eqref{eq:oe_indicator} are problem dependent and will be specified for each test case.
% }
{%\color{blue}
In all the following simulations and tests, we use CFL constant $K=\frac{0.5}{2N+1}$. Unless otherwise specified, the OE scale factor $s$ defined in \eqref{eq:oe scale factor} and shock indicator threshold $C$ defined in \eqref{eq:oe C} are set as:
\begin{equation}
\label{eq:sC}
    s=0.2, \quad C=0.02.
\end{equation}
%[RK:::WE SHALL USE K=0.8 as a starting point, and reduces it if cell-average pp failed.  and s \& C  shall be given reference values. People usually do not like problem dependent constants...  CITE MFEM library]

\subsection{Isentropic Euler vortex problem}

The isentropic Euler vortex problem describes the advection of a smooth
vortex by a constant background velocity field. Owing to its known
analytic solution, this problem serves as a standard benchmark for
assessing the accuracy and robustness of high-order numerical methods
for the compressible Euler equations
\cite{vincent2011insights,shu2006essentially}.

The computational domain is the periodic square
$\Omega=[-10,10]^2$. The exact solution is given by
\begin{equation}
\label{eq:isentropic_vortex}
\begin{aligned}
u(x,y,t)
&= u_b
-\frac{\beta}{2\pi}\bigl(y-v_b t\bigr)
\exp\!\left(
\frac12\Bigl[1-(x-u_b t)^2-(y-v_b t)^2\Bigr]
\right),\\
v(x,y,t)
&= v_b
+\frac{\beta}{2\pi}\bigl(x-u_b t\bigr)
\exp\!\left(
\frac12\Bigl[1-(x-u_b t)^2-(y-v_b t)^2\Bigr]
\right),\\
T(x,y,t)
&=
1-\frac{(\gamma-1)\beta^2}{8\gamma\pi^2}
\exp\!\left(
1-(x-u_b t)^2-(y-v_b t)^2
\right),\\
\rho(x,y,t)
&= T(x,y,t)^{\frac{1}{\gamma-1}},
\qquad
p(x,y,t)
= T(x,y,t)^{\frac{\gamma}{\gamma-1}}.
\end{aligned}
\end{equation}
Here $\gamma=1.4$ is the ratio of specific heats, $\beta=5$ controls the
vortex strength, and $(u_b,v_b)=(1,1)$ denotes the constant background
velocity.

To assess the performance of the proposed method on curved geometries,
we employ a curvilinear mesh in this test. The mesh is generated by
first constructing a uniform Cartesian grid with $N_x=N_y=M$ elements,
and then applying the smooth coordinate transformation
\begin{equation}
\label{eq:curve_mesh}
\begin{aligned}
x' &= x + 0.05\,\sin(\alpha\pi x)\cos(\alpha\pi y),\\
y' &= y + 0.10\,\cos(\alpha\pi x)\sin(\alpha\pi y),
\end{aligned}
\end{equation}
with $\alpha=1.5$.

Figure~\ref{fig:vortex_ic_mesh} illustrates the initial density field
together with the resulting curvilinear mesh.
\begin{figure}[H]
  \centering
  \begin{subfigure}{0.42\textwidth}
    \centering
    \includegraphics[width=\linewidth]{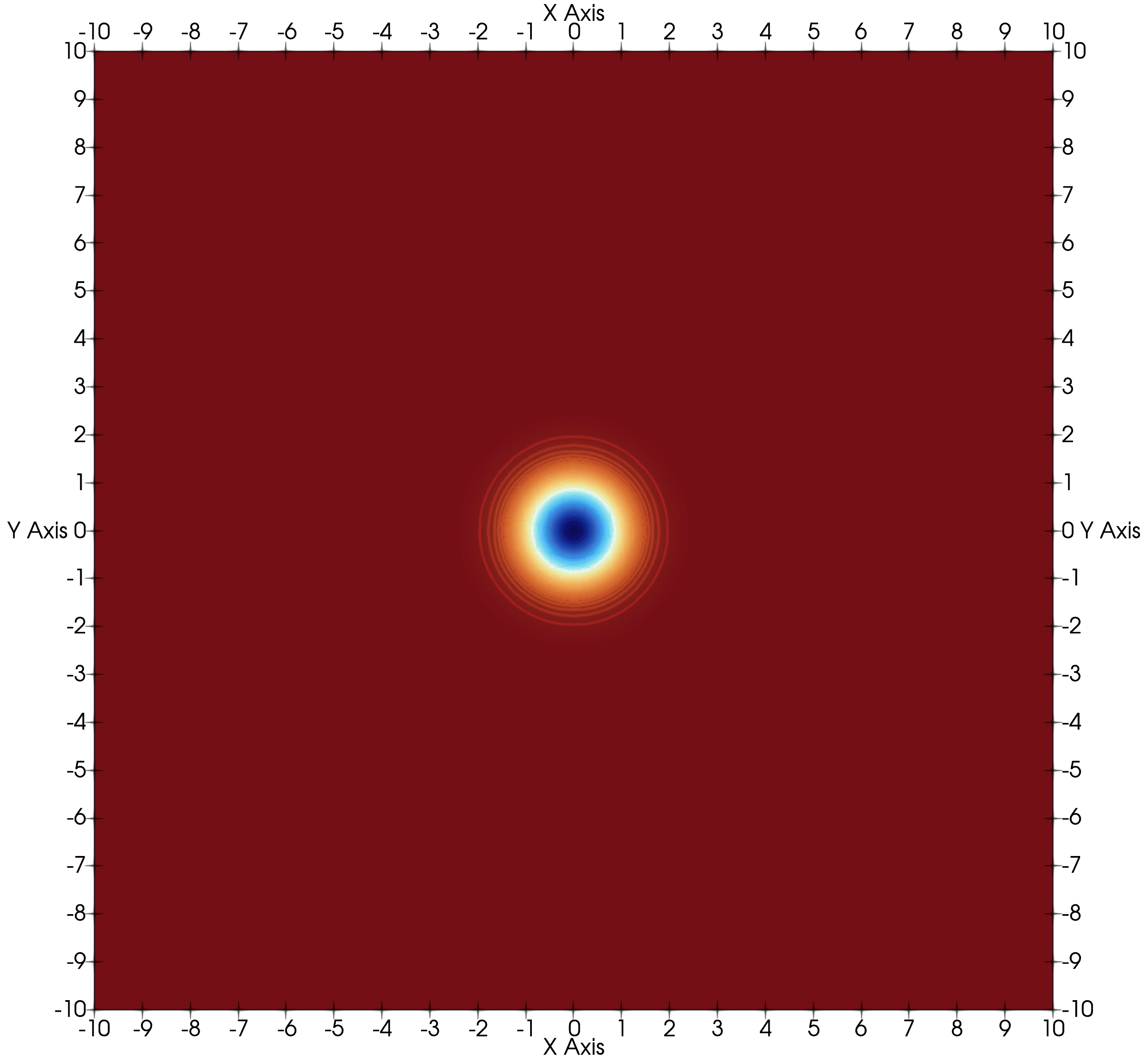}
    \caption{Initial Density Profile }
    %\label{fig:vortex_ic}
  \end{subfigure}\hfill
  \begin{subfigure}{0.42\textwidth}
    \centering
    \includegraphics[width=\linewidth]{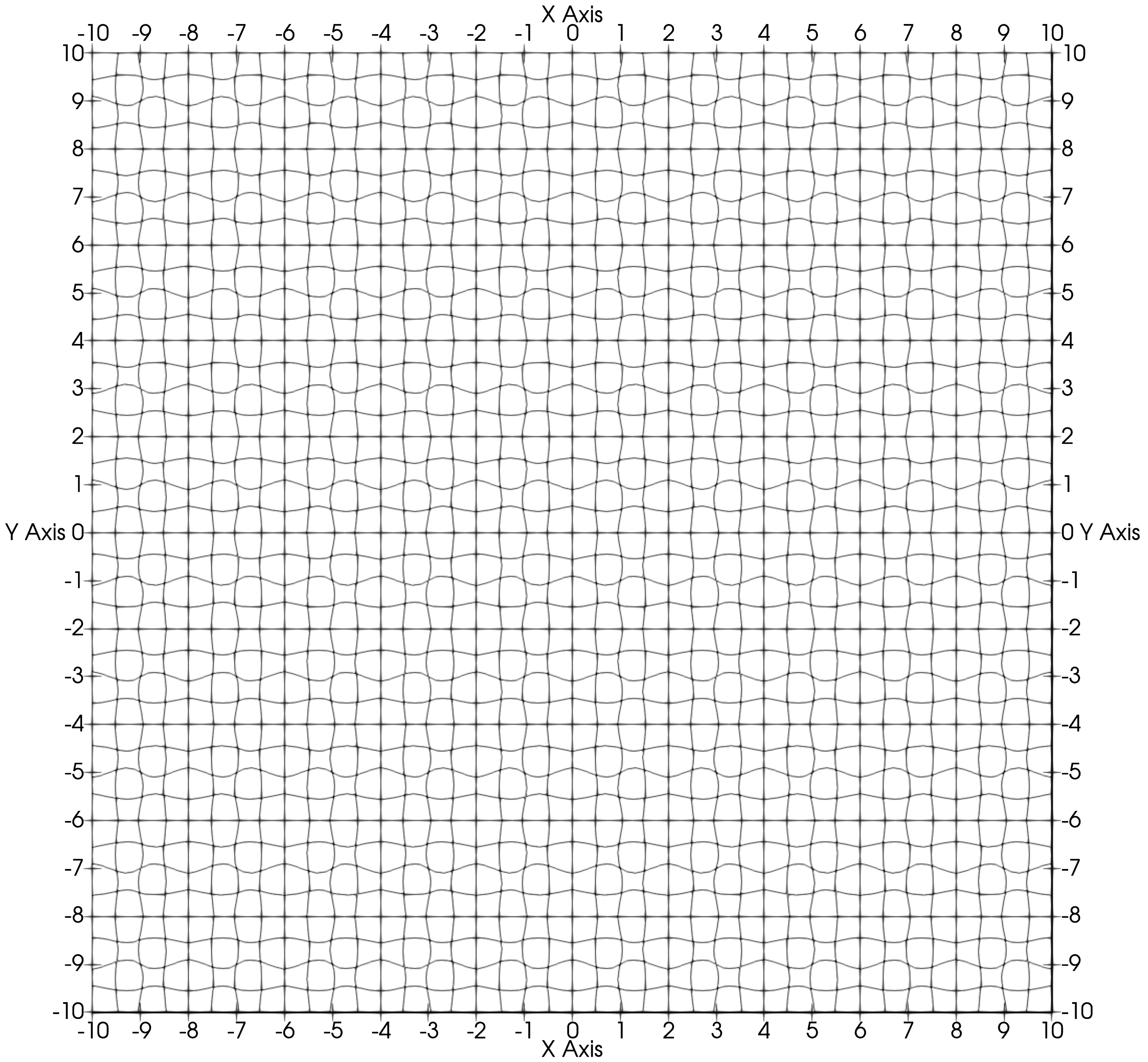}
    \caption{Curvilinear Grids}
    %\label{fig:vortex_mesh}
  \end{subfigure}
  \caption{\textit{Isentropic Euler vortex problem}: initial condition of density and Curvilinear grids for Isentropic Euler Vortex problem. Mesh size: $N_x=N_y=M=40$}
  \label{fig:vortex_ic_mesh}
\end{figure}

To assess the convergence behavior, the simulation is advanced to
$t=20$, corresponding to one full advection period of the vortex across
the domain. The $L^2$ errors of all conservative variables are then
computed for polynomial orders $N=3$ and $N=4$ in
$\mathbb Q_N$. Note that due to smoothness of this test case, the OE procedure is not activated. 
%{\color{blue} We use OE scale factor $s=0.2$ and shock indicator threshold $C=0.02$ in all the tests of this subsection. However, due to the smoothness of this problem, the OE is never activated.}

%{\color{red} RK: what time step size do you use? what is s \& C? Mention that due to problem smoothness, OE is never activated...}

Figure~\ref{fig:vortex_convergence} demonstrates that the proposed
method achieves the expected $(N+1)$-th order convergence rate, i.e.,
$\mathcal O(h^{N+1})$, where the mesh size is defined as
$h=20/M$. This behavior is consistent with the theoretical convergence
results for smooth solutions of the discontinuous Galerkin method
\cite{cockburn1999discontinuous}.
\begin{figure}[H]
  \centering
  \begin{subfigure}{0.48\textwidth}
    \centering
    \includegraphics[width=\linewidth]{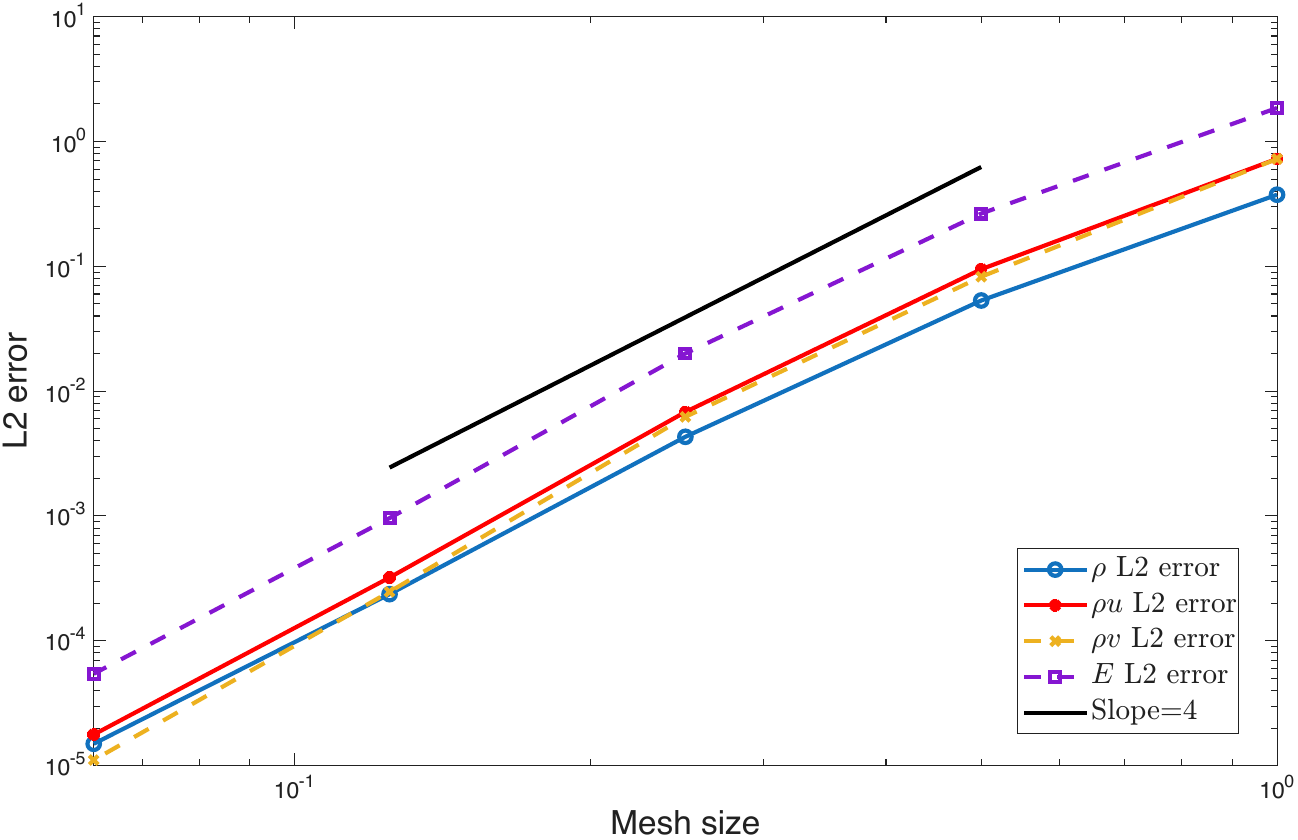}
    \caption{$N=3$ }
    %\label{fig:vortex_ic}
  \end{subfigure}\hfill
  \begin{subfigure}{0.48\textwidth}
    \centering
    \includegraphics[width=\linewidth]{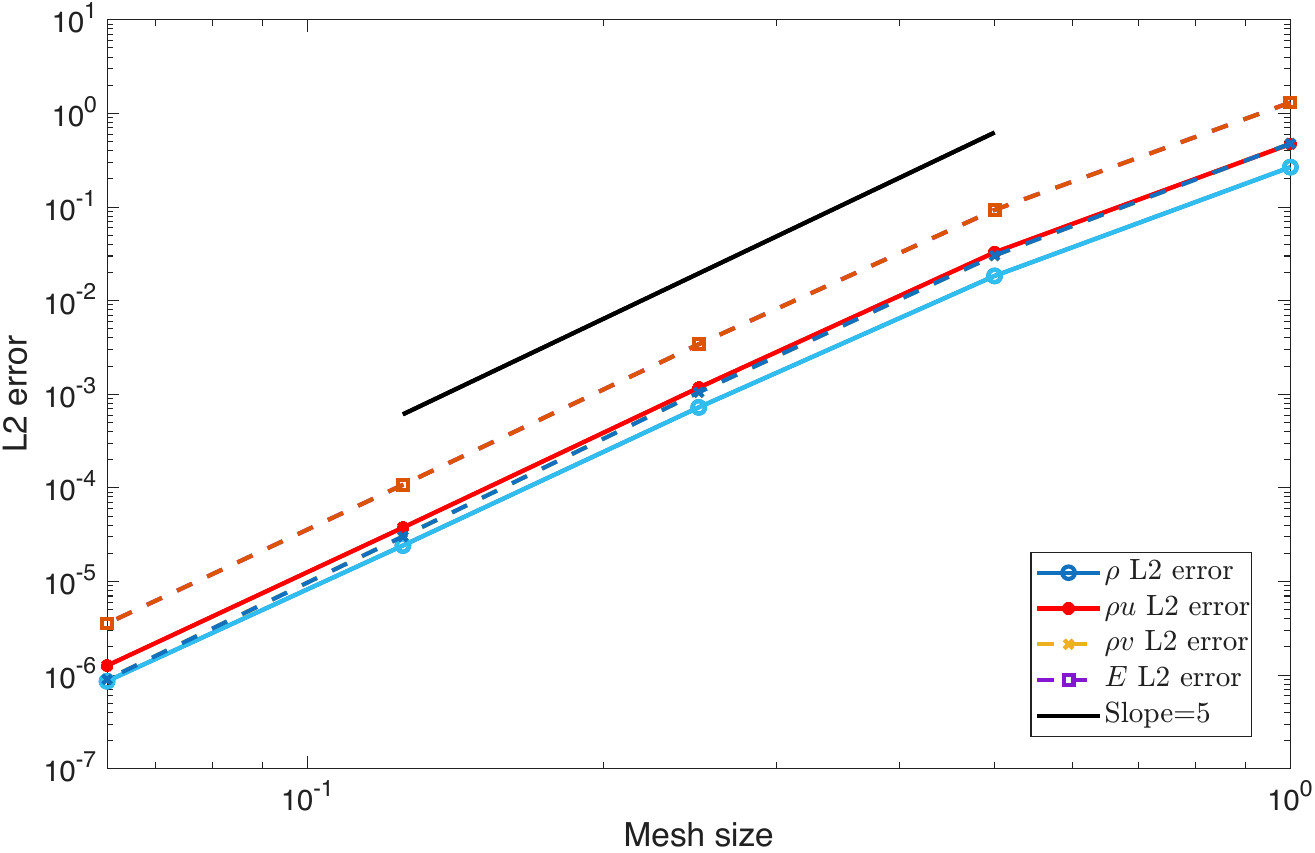}
    \caption{$N=4$}
    %\label{fig:vortex_mesh}
  \end{subfigure}
  \caption{\textit{Isentropic Euler vortex problem}: convergency Rate of Isentropic Euler vortex problem in curvilinear mesh. Left figure has polynomial order of $N=3$, and the polynomial order of right figure is $N=4$ }
  \label{fig:vortex_convergence}
\end{figure}

We further examine the entropy stability of the proposed scheme for
this test case. Specifically, we compute the numerical approximation of
the total entropy
\begin{equation}
\label{eq:vortex_entropy_integral}
I_{\eta}(t)
=
\int_{\Omega}\eta\, d\bm x
=
\sum_{e}\int_{\Omega_e}\eta\, d\bm x,
\end{equation}
and track its evolution in time.

Figure~\ref{fig:vortex_entropy} shows the time history of
$I_{\eta}(t)$. In both cases, the total entropy is observed to be
non-increasing throughout the simulation, indicating that the proposed
DG scheme satisfies a discrete entropy inequality and is therefore
entropy stable.
\begin{figure}[H]
  \centering
  \begin{subfigure}{0.48\textwidth}
    \centering
    \includegraphics[width=\linewidth]{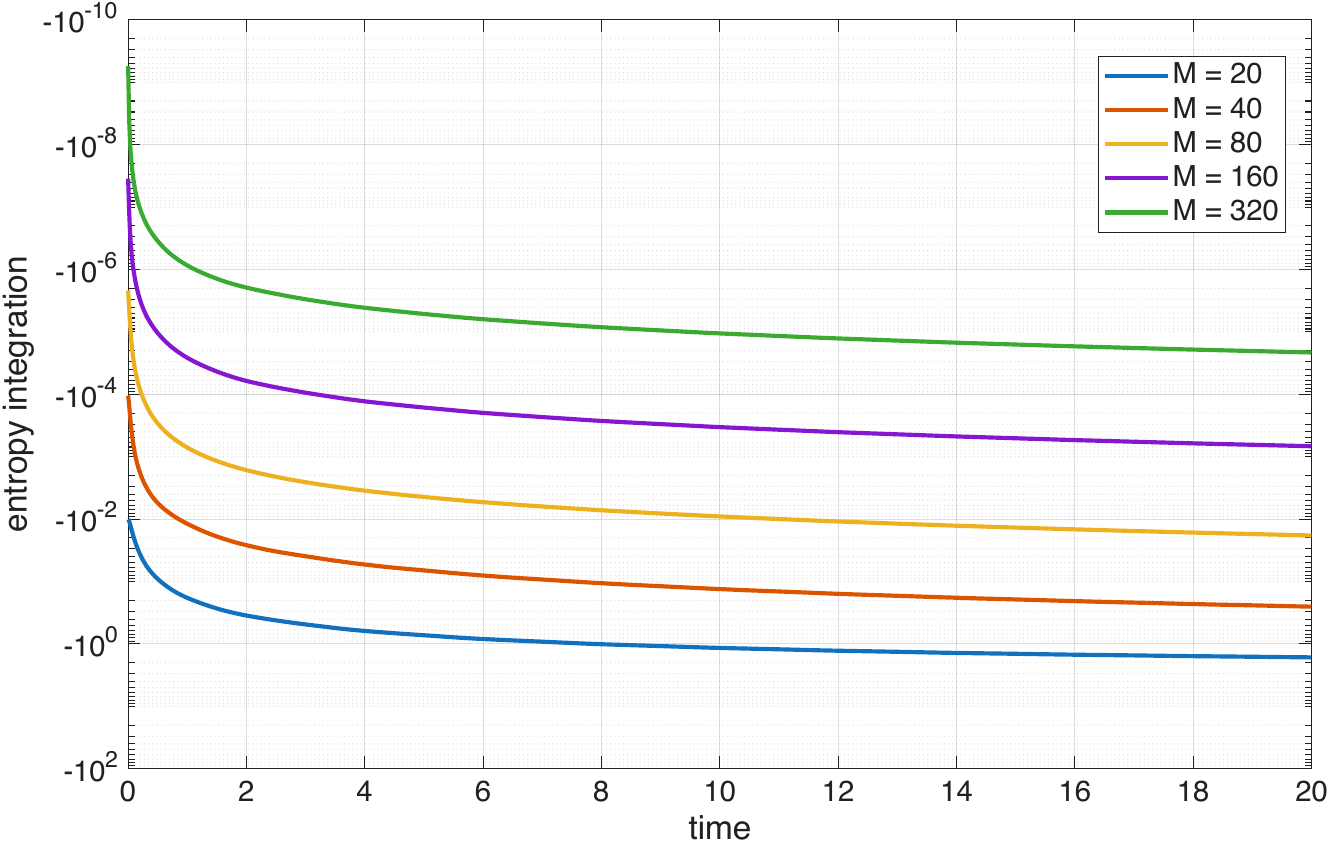}
    \caption{$N=3$ }
    %\label{fig:vortex_ic}
  \end{subfigure}\hfill
  \begin{subfigure}{0.48\textwidth}
    \centering
    \includegraphics[width=\linewidth]{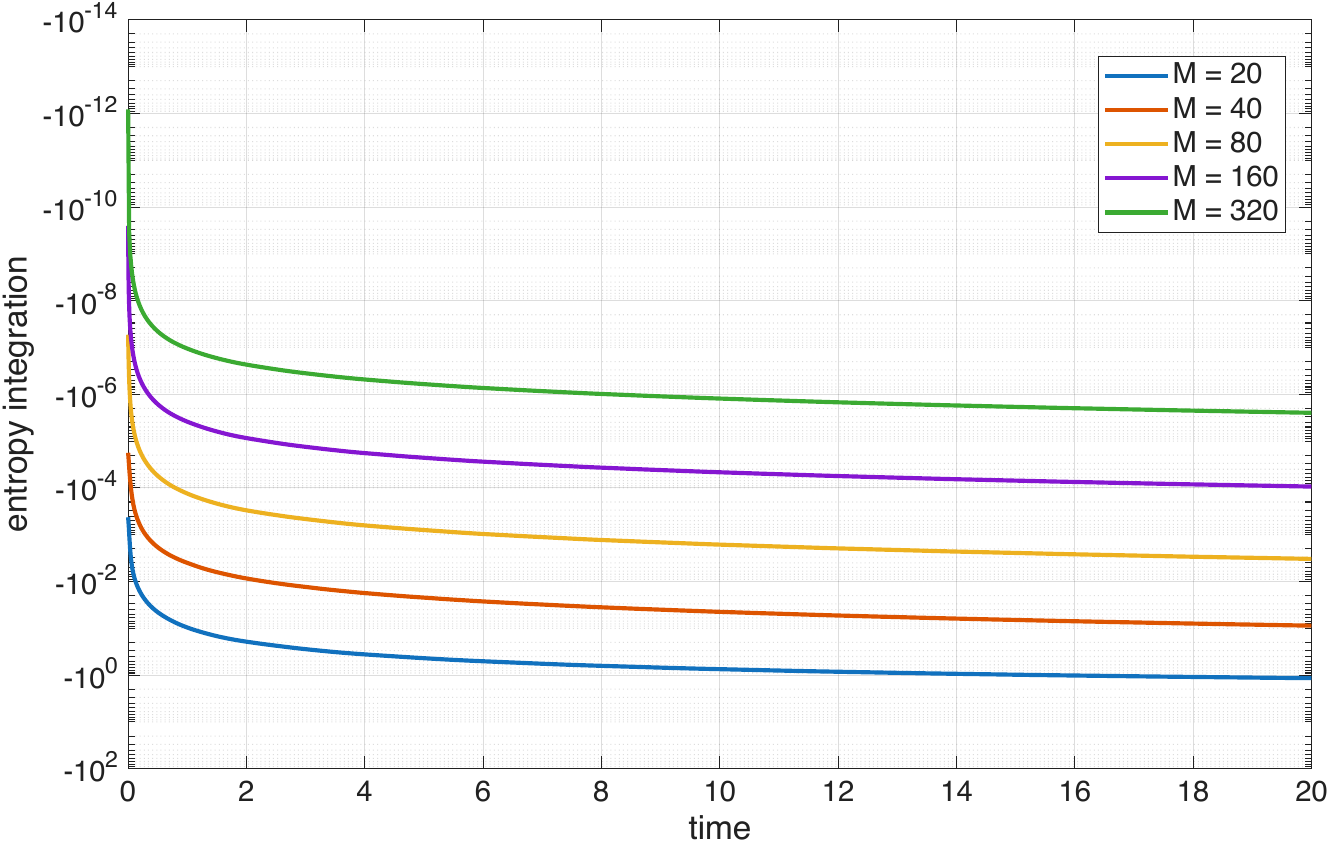}
    \caption{$N=4$}
    %\label{fig:vortex_mesh}
  \end{subfigure}
  \caption{\textit{Isentropic Euler vortex problem}: how entropy integration changes with time. Left figure has polynomial order of $N=3$, and the polynomial order of right figure is $N=4$ }
  \label{fig:vortex_entropy}
\end{figure}

%%%%%%%%%%%%%%%%%%%%%%%%%%%%%%%%%%%%%%%%%%%%%%%%%%%%%
%%%%%%%%%%%%%%%%%%%%%%%%%%%%%%%%%%%%%%%%%%%%%%%%%%%%%
\subsection{Two-dimensional Riemann problems}

We next consider two-dimensional Riemann problems for the compressible
Euler equations. The computational domain $\Omega=[0,1]^2$ is divided
into four quadrants, each of which is assigned a constant initial
state. Depending on the choice of these initial states, a wide variety
of wave interactions and flow structures can be generated
\cite{lax1998solution,schulz1993numerical}. In this work, we focus on
Riemann problems~12 and~13 reported in \cite{lax1998solution}.

In several existing studies, outflow boundary conditions are employed
for these problems \cite{peng2025oedg}. However, in our numerical
experiments, such boundary conditions may induce spurious
oscillations near the domain boundaries. To avoid these effects and to better isolate
the interior wave interactions, we instead impose periodic boundary
conditions as discribed in \cite{guermond2011entropy_nonlinear}.

To accommodate periodic boundaries, the computational domain is
extended to $\Omega=[0,2]^2$, and the initial data are defined through
a periodic extension of the standard Riemann configuration on
$[0,1]^2$. Specifically, the initial condition is given by
\begin{equation}
\label{eq:riemann_ic}
\begin{aligned}
\bm U &= \bm U_1,
& 0<x<0.5,\; 0<y<0.5,\\
\bm U &= \bm U_2,
& 0<x<0.5,\; 0.5<y<1,\\
\bm U &= \bm U_3,
& 0.5<x<1,\; 0<y<0.5,\\
\bm U &= \bm U_4,
& 0.5<x<1,\; 0.5<y<1,\\
\bm U(x,y) &= \bm U(x,2-y),
& 0<x<1,\; 1<y<2,\\
\bm U(x,y) &= \bm U(2-x,y),
& 1<x<2,\; 0<y<1,\\
\bm U(x,y) &= \bm U(2-x,2-y),
& 1<x<2,\; 1<y<2.
\end{aligned}
\end{equation}
Here $\bm U=(\rho,\rho u,\rho v,E)^\top$ denotes the vector of
conservative variables, and $\bm U_1,\bm U_2,\bm U_3,\bm U_4$ are the
prescribed constant states defining the corresponding two-dimensional
Riemann problem on the unit square $[0,1]^2$.

For the following two Riemann problems, computations are performed on
uniform Cartesian grids with $N_x=N_y=320$ elements. In both of the two following two problems, we use the $\mathbb Q_3$ DG approximation on all
element. %The parameters $s,C$ are set as \eqref{eq:sC}.
%{\color{blue} In both of the two following two problems, we use the $\mathbb Q_3$ DG approximation on all elements, the OE scale factor $s=0.2$ and the shock indicator $C=0.02$.}

%{\color{red}Unless otherwise specified, the OE scaling factor in \eqref{eq:oe scale factor} is set to $s=0.1$, and the threshold constant in the shock indicator \eqref{eq:oe C} is chosen as $C=0.05$. WHAT polynomial degree did you use? }
%%%%%%%%%%%%%%%%%%%%%%%%%%%%%%%%%%%%%%%%%%%%%%%%%%%%%
\subsubsection{Two-dimensional Riemann problem: Case 12}

We first consider Riemann problem~12 from \cite{lax1998solution}. The
initial conditions on the unit square $[0,1]^2$ are prescribed as
\begin{equation}
\label{eq:riemann12_ic}
\begin{aligned}
\rho &= 0.8, \quad p=1.0, \quad \bm u=(0,0),
& 0<x<0.5,\; 0<y<0.5,\\
\rho &= 1.0, \quad p=1.0, \quad \bm u=(0.7276,0),
& 0<x<0.5,\; 0.5<y<1,\\
\rho &= 1.0, \quad p=1.0, \quad \bm u=(0,0.7276),
& 0.5<x<1,\; 0<y<0.5,\\
\rho &= 0.5313, \quad p=0.4, \quad \bm u=(0,0),
& 0.5<x<1,\; 0.5<y<1.
\end{aligned}
\end{equation}
These states define the vectors
$\bm U_1,\bm U_2,\bm U_3,$ and $\bm U_4$ in
\eqref{eq:riemann_ic}. The initial condition on the extended domain
$\Omega=[0,2]^2$ is then obtained via the periodic extension described
in \eqref{eq:riemann_ic}.

The simulation is advanced to $t=0.2$. The numerical results for
Riemann problem~12 are shown in Figure~\ref{fig:riemann12}. The results are clipped to the domain $[0,1]^2$. In
Figure~\ref{fig:riemann12}(a), the density profiles demonstrate that the
contact discontinuity in the lower-left quadrant is well resolved,
and the fine-scale flow structures behind the shocks are well captured.
These features are in good agreement with previously reported results
\cite{lax1998solution,schulz1993numerical,kurganov2002solution}.

Figure~\ref{fig:riemann12}(b) displays the distribution of the shock
indicator defined in \eqref{eq:oe_indicator} and
\eqref{eq:oe_indicator_explicit}. The indicator is activated primarily
in regions containing strong shocks, while remaining inactive in
smooth regions. As a result, the OE procedure is applied only where
necessary, leading to a significant reduction in its computational
cost. 
%{\color{red} HOW MUCH COST REDUCED? The computation of damping strength is most costly... my current code will compute all damping coeff.. not just selected troubled cells}
\begin{figure}[htbp]
  \centering
  \begin{subfigure}{0.42\textwidth}
    \centering
    \includegraphics[width=\linewidth]{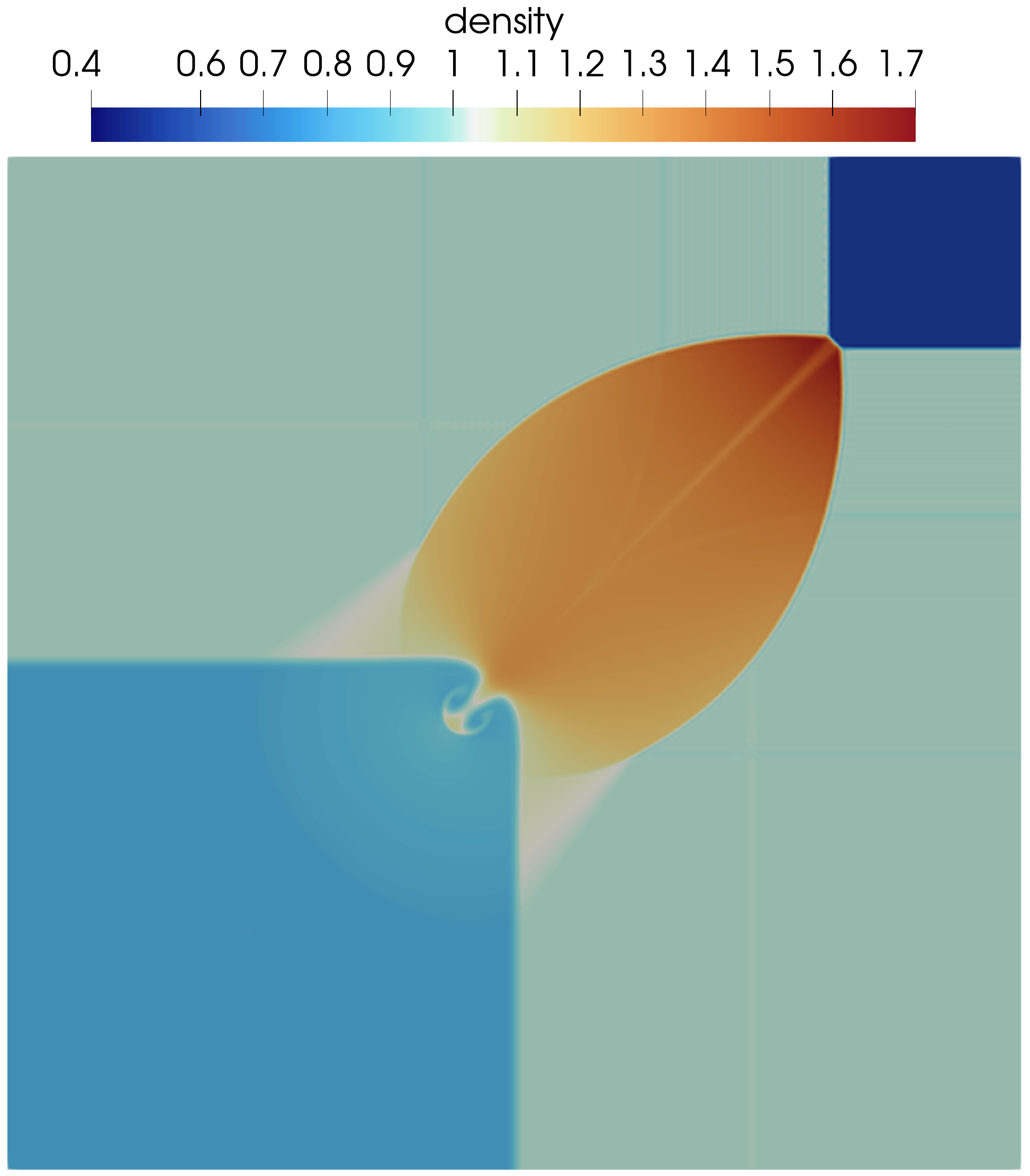}
    \caption{density profile }
    %\label{fig:vortex_ic}
  \end{subfigure}\hfill
  \begin{subfigure}{0.42\textwidth}
    \centering
    \includegraphics[width=\linewidth]{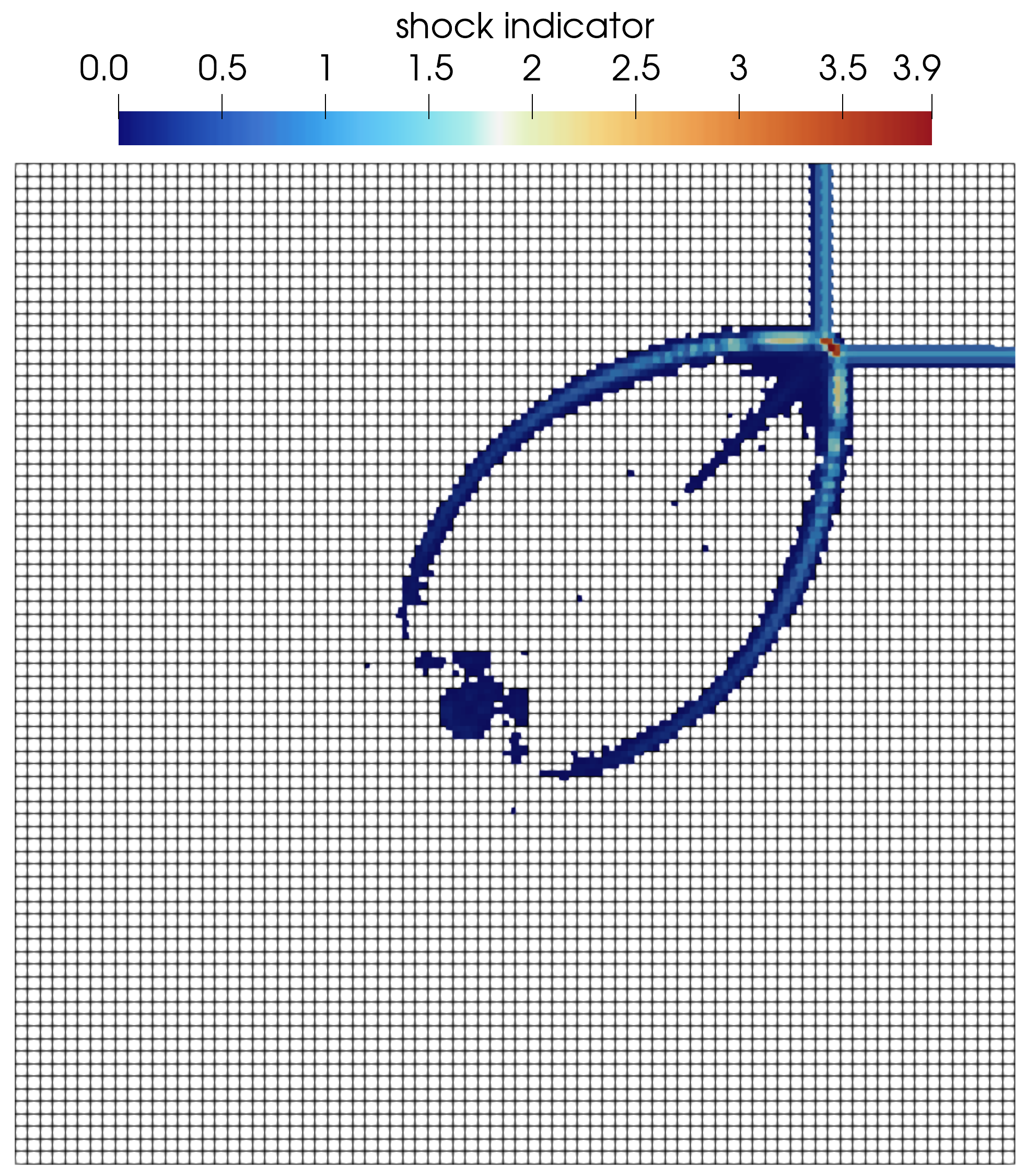}
    \caption{shock indicator}
    %\label{fig:vortex_mesh}
  \end{subfigure}
  \caption{\textit{Two-dimensional Riemann problem 12}: (a) the density profile at $t=0.2$; (b) the shock indicator profile at $t=0.2$.}
  \label{fig:riemann12}
\end{figure}
%%%%%%%%%%%%%%%%%%%%%%%%%%%%%%%%%%%%%%%%%%%%%%%%%%%%%
\subsubsection{Two-dimensional Riemann problem: Case 13}
Then Riemann problem 13 from \cite{lax1998solution} is considered. The initial condition on the unit square $[0,1]^2$ is set as:
\begin{equation}
 \begin{aligned}
     &\rho = 0.8, \quad p= 0.4, \quad \bm u = (0.1,-0.3), \quad  \text{in }\quad 0<x<0.5, \quad 0<y<0.5,\\
     &\rho = 0.5197 \quad p= 0.4, \quad \bm u = (-0.6259,-0.3), \quad  \text{in }\quad 0<x<0.5, \quad 0.5<y<1,\\
     &\rho = 0.5313, \quad p= 0.4, \quad \bm u = (0.1,0.4276), \quad  \text{in }\quad 0.5<x<1, \quad 0<y<0.5,\\
     &\rho = 1, \quad p= 1, \quad \bm u = (0.1,-0.3), \quad  \text{in }\quad 0.5<x<1, \quad 0.5<x<1,\\
 \end{aligned}   
\end{equation}
For this test, we run the simulation to $t=0.3$. Figure~\ref{fig:riemann 13} shows the solution of Riemann problem 13 clipped to the domain $[0,1]^2$. Figure~\ref{fig:riemann 13}(a) shows the density profile, where the shocks and contact discontinuities are captured well. Figure~\ref{fig:riemann 13}(b) shows the profile of shock indicator. 
%{\color{red} TODO: shock indicator only show data >= 0.05, along with the mesh... So you see exactly where limiter is activated.}
\begin{figure}[H]
  \centering
  \begin{subfigure}{0.42\textwidth}
    \centering
    \includegraphics[width=\linewidth]{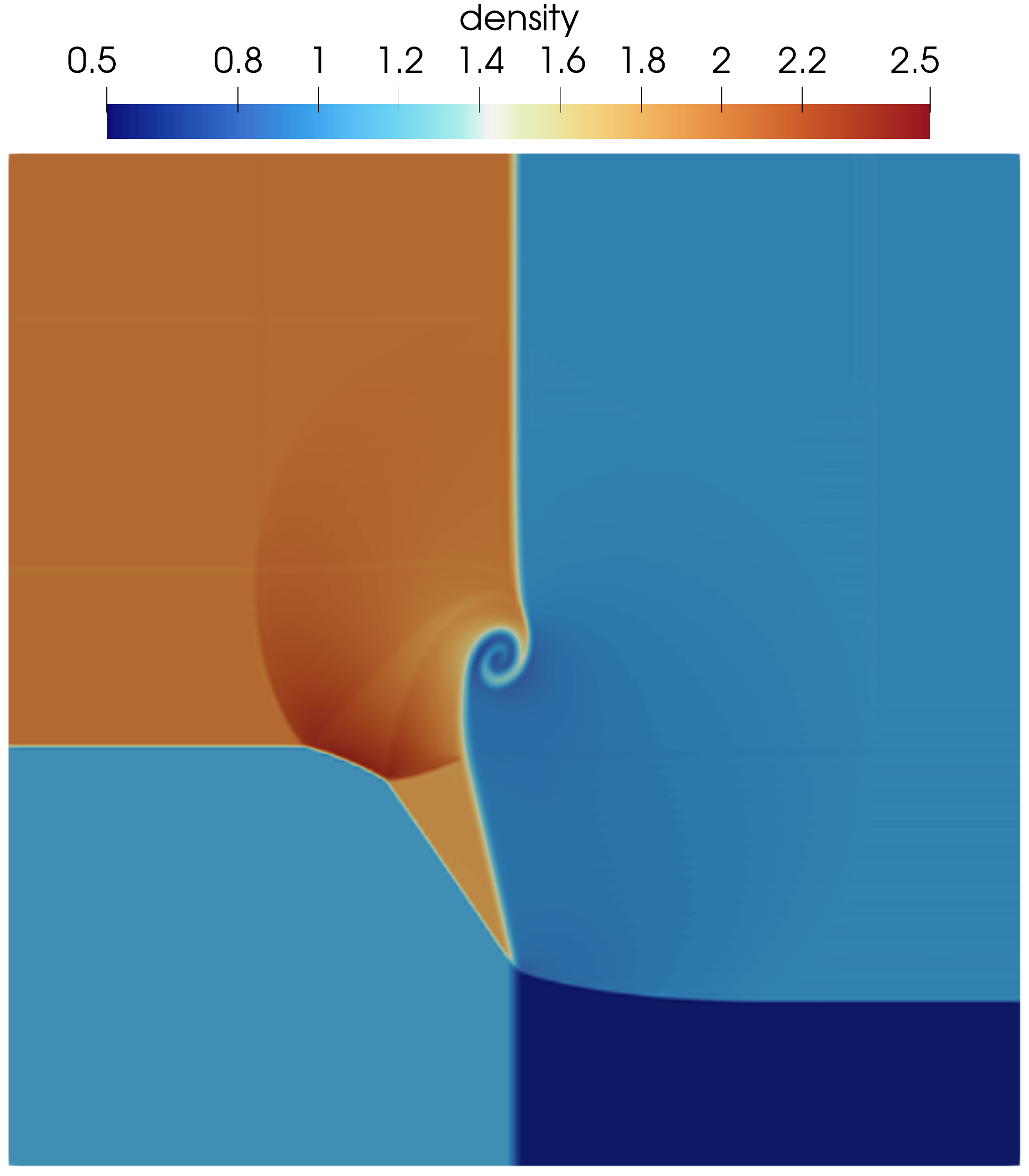}
    \caption{density profile }
    %\label{fig:vortex_ic}
  \end{subfigure}\hfill
  \begin{subfigure}{0.42\textwidth}
    \centering
    \includegraphics[width=\linewidth]{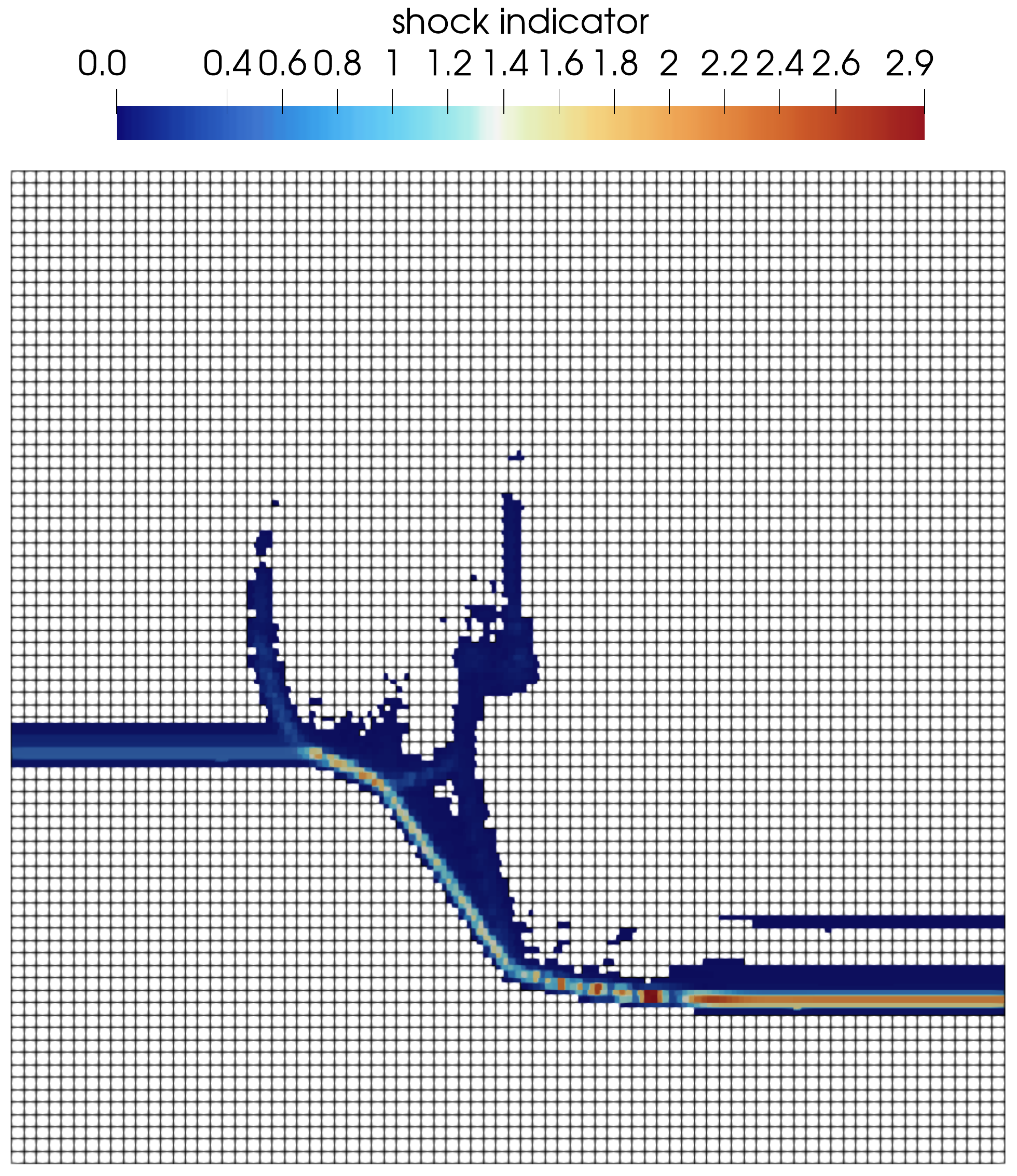}
    \caption{shock indicator}
    %\label{fig:vortex_mesh}
  \end{subfigure}
  \caption{\textit{Two-dimensional Riemann problem 13}: (a) the density profile at $t=0.3$; (b) the shock indicator profile at $t=0.3$.}
  \label{fig:riemann 13}
\end{figure}
%%%%%%%%%%%%%%%%%%%%%%%%%%%%%%%%%%%%%%%%%%%%%%%%%%%%%
%%%%%%%%%%%%%%%%%%%%%%%%%%%%%%%%%%%%%%%%%%%%%%%%%%%%%
\subsection{Double Mach Reflection}

The double Mach reflection problem is a classical benchmark for
evaluating the performance of numerical schemes in the presence of
strong shocks and complex wave interactions. Owing to its rich flow
features, including shock–shock interactions and shear-layer
instabilities, this problem has been extensively studied in the
literature
\cite{woodward1984numerical,shu2003high,zhang2017positivity}.

The computational domain is the rectangular region
$\Omega=[0,4]\times[0,1]$. The initial condition consists of an oblique
shock with Mach number $10$ propagating to the right. The shock
initially intersects the bottom boundary at $(x,y)=(1/6,0)$ and forms
an angle of $60^\circ$ with respect to the horizontal direction. The
corresponding initial states are given by
\begin{equation}
\label{eq:dmr_ic}
(\rho,u,v,p)=
\begin{cases}
(8,\;8.25\cos(\pi/6),\;-8.25\sin(\pi/6),\;116.5),
& x<\dfrac{1}{6}+\dfrac{y}{\sqrt{3}},\\[1ex]
(1.4,\;0,\;0,\;1),
& x>\dfrac{1}{6}+\dfrac{y}{\sqrt{3}}.
\end{cases}
\end{equation}

The boundary conditions are prescribed as follows. An inflow condition
is imposed on the left boundary, while an outflow condition is used on
the right boundary. On the top boundary, a moving shock condition is
applied: the post-shock state is prescribed on the segment
$0<x<\frac{1}{6}+\frac{1}{\sqrt{3}}(1+20t)$, and the pre-shock state is
used elsewhere. On the bottom boundary, the post-shock state is imposed
for $0<x<\frac{1}{6}$, whereas a reflective wall boundary condition is
applied on the remaining portion.

The computation is carried out on a uniform Cartesian grid with
$N_x=480$ and $N_y=120$ elements, corresponding to a mesh size
$h_x=h_y=1/120$. A $\mathbb Q_2$ DG approximation is employed on all
elements. The OE parameters $s$ and $C$ are set as \eqref{eq:sC}.
%{\color{blue}In this test, the OE scale factor $s$ fixed as $s=0.2$ and shock indicator threshold is set as $C=0.02$.}

%{\color{red}The OE scaling factor in \eqref{eq:oe scale factor} is set to $s=0.4$, and the shock indicator threshold in \eqref{eq:oe C} is chosen as $C=0.02$. WE SHALL MAKE THESE CONSTANTS UNIFORM THROUGH ALL TESTS...}

Figure~\ref{fig:dmr} presents the numerical results at time $t=0.2$.
Figure~\ref{fig:dmr}(a) shows the density contours in the region
$[0,3]\times[0,1]$, where nonphysical oscillations near strong
discontinuities are effectively suppressed by the OE procedure.
Figure~\ref{fig:dmr}(b) displays a zoomed-in view of the density field
over $[2.2,2.8]\times[0,0.5]$, revealing fine-scale flow structures,
including small roll-ups associated with Kelvin--Helmholtz
instabilities. In contrast, such details are significantly damped in
\cite{peng2025oedg} due to stronger numerical dissipation.
\begin{figure}[H]
  \centering
  \begin{subfigure}{0.64\textwidth}
    \centering
    \includegraphics[width=\linewidth]{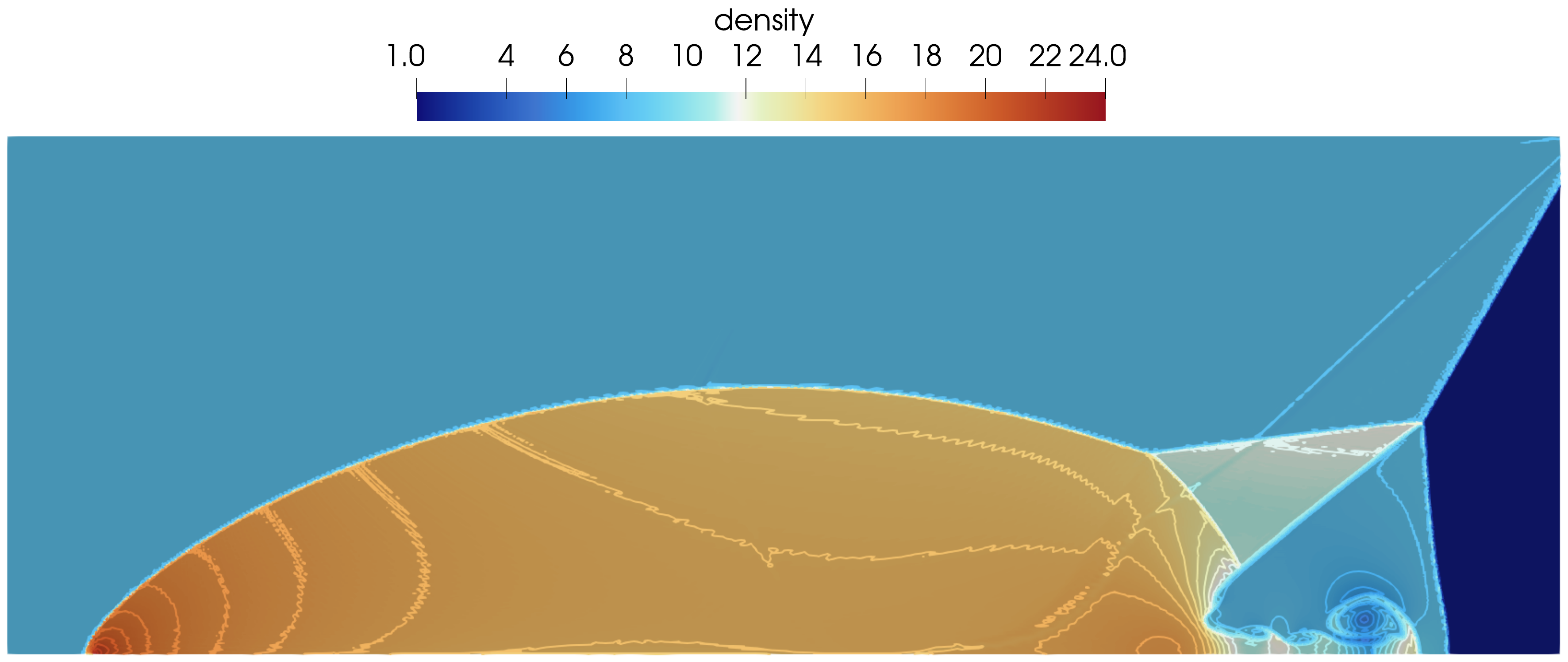}
    \caption{density profile and contour}
    %\label{fig:vortex_ic}
  \end{subfigure}
  \hspace{0.5cm} 
  \begin{subfigure}{0.28\textwidth}
    \centering
    \includegraphics[width=\linewidth]{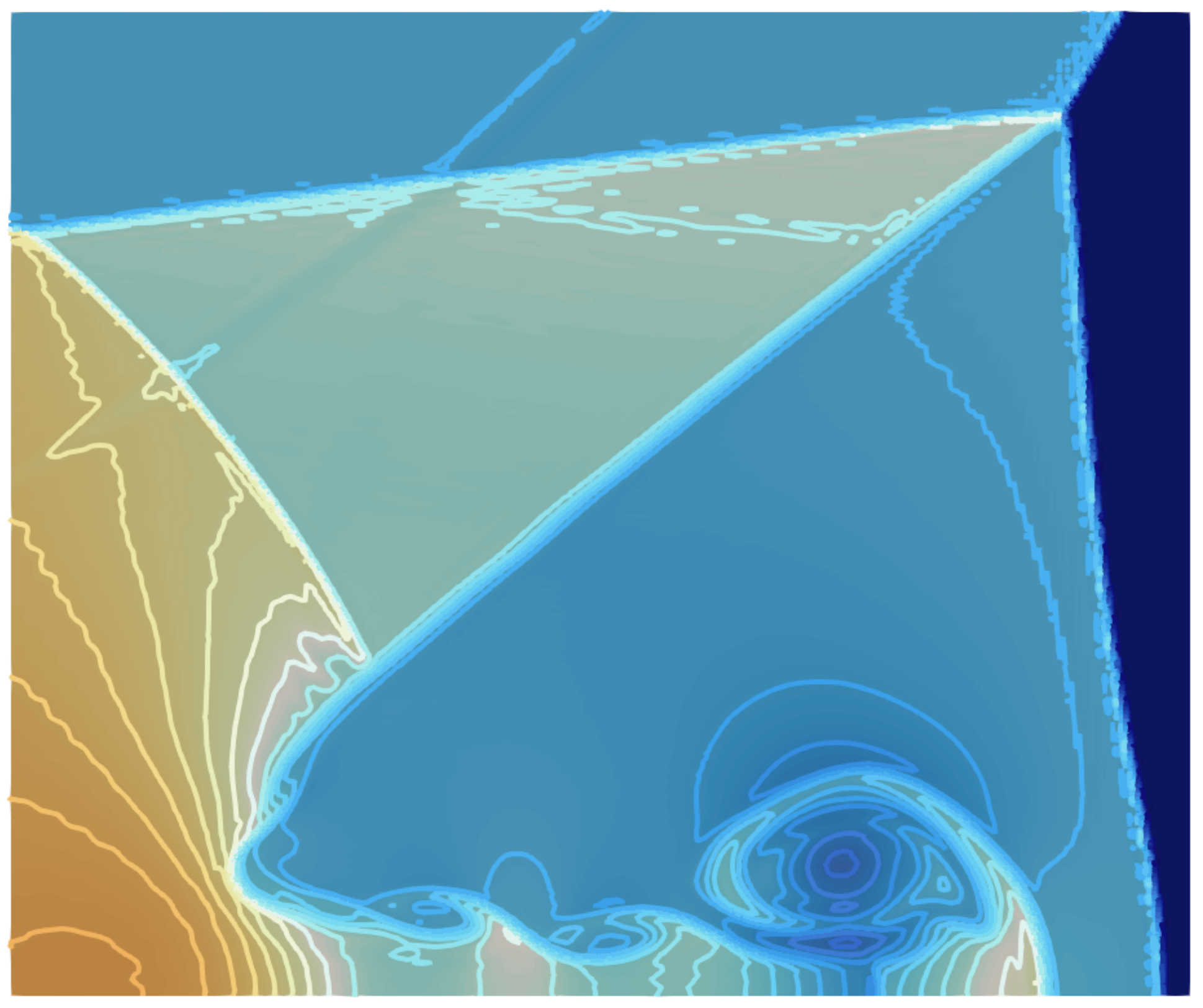}
    \caption{close-up of density profile}
    %\label{fig:vortex_mesh}
  \end{subfigure}

  \vspace{0.5cm}  % 行间距
  
  % 第二行
  \begin{subfigure}{0.64\textwidth}
    \centering
    \includegraphics[width=\linewidth]{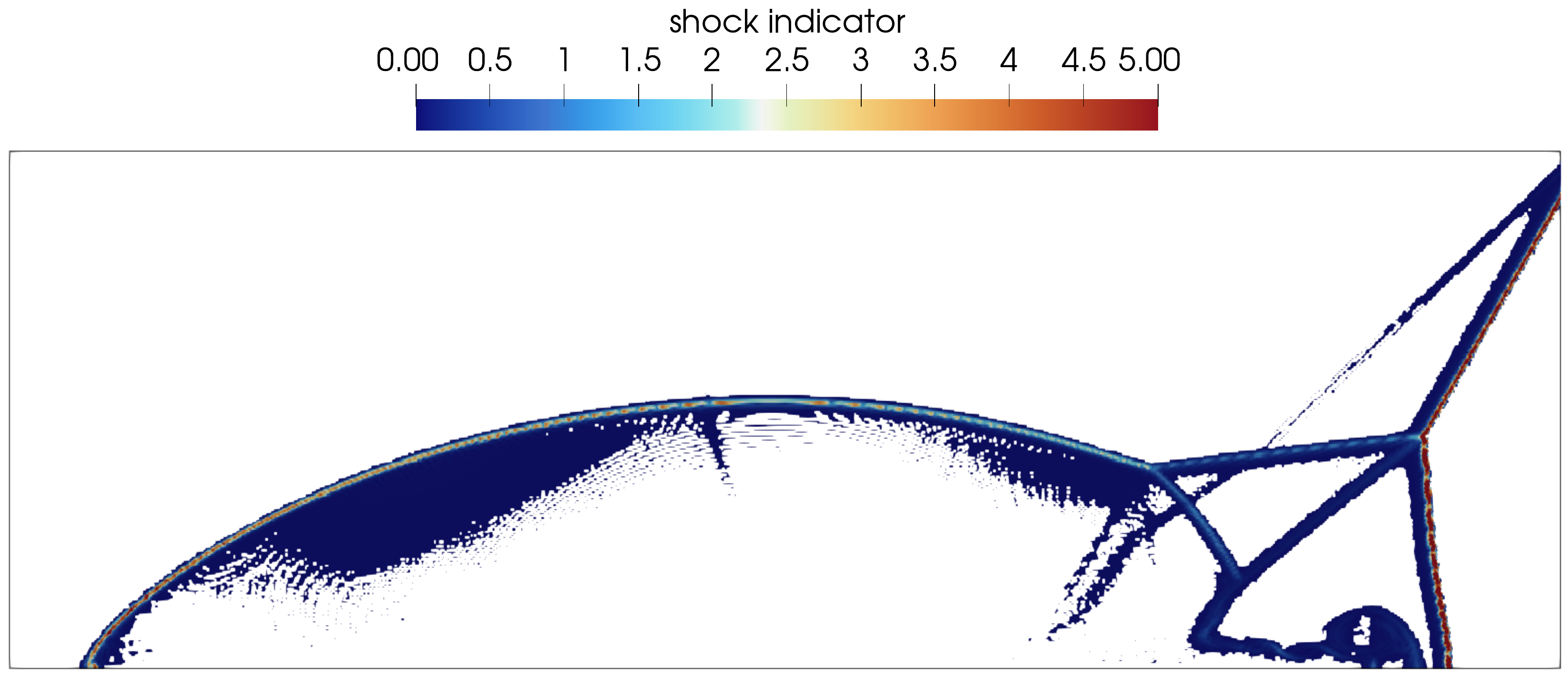}
    \caption{shock indicator profile}
    %\label{fig:sub3}
  \end{subfigure}
  \caption{\textit{Double Mach Reflection}: (a) the density profile and contour in $[0,3]\times[0,1]$ at $t=0.2$. The contour shown in the image is generated from 35 evenly distributed contours; (b) the close-up of density profile in $[2.2,2.8]\times[0,0.5]$ at $t=0.2$; (c) the shock indicator profile.}
  \label{fig:dmr}
\end{figure}
%We also test how the shock indicator threshold  $C$ will impact the computation cost. We use the same parameters as in figure\eqref{fig:dmr}, but change the shock indicator threshold $C=0, 0.02, 0.1$. The following table\eqref{tab:dmr} summaries the computation time for each case. The average CPU time per step is simply $\dfrac{\text{total CPU time}}{\text{total step}}$. Table\eqref{tab:dmr} clearly demonstrates that with increasing value of shock indicator threshold $C$, the computation cost decreases, which shows that the shock indicator can increase the efficiency of OE procedure. The test in the last row of Table\eqref{tab:dmr} was carried out without OE procedure, in which the average CPU time per step is much less than the other tests. However, the total step and total CPU time are larger than other tests, because the numerical-oscillation leads to very small time step which results to more step.
We further investigate the influence of the shock-indicator threshold $C$ on the computational cost.
All parameters are identical to those used in Figure~\eqref{fig:dmr}, except that the threshold is varied as $C=0$, $0.02$, and $0.1$.
Table~\eqref{tab:dmr} reports the corresponding computational statistics.
The average CPU time per time step is defined as $\dfrac{\text{total CPU time}}{\text{total step}}$.

As shown in Table~\eqref{tab:dmr}, increasing the threshold $C$ leads to a reduction in computational cost.
This behavior is expected, since a larger threshold identifies fewer troubled cells and therefore reduces the number of elements where the OE procedure is activated.
These results confirm that the shock-indicator strategy effectively improves the efficiency of the OE procedure.

For comparison, the last row of Table~\eqref{tab:dmr} corresponds to a computation performed without the OE procedure.
In this case, the average CPU time per step is significantly smaller, due to the absence of additional damping operations.
However, the total number of time steps — and consequently the total CPU time — is larger than in the stabilized cases.
This is because uncontrolled numerical oscillations enforce smaller admissible time steps, thereby increasing the overall computational effort.
\begin{table}[ht!]
    \centering
    \begin{tabular}{|c|c|c|c|}
    \hline
         & total step & total CPU time & average CPU time per step\\
         \hline
        $C=0$ & 10261 & 3123.8 $s$ & 0.304 $s$\\
        \hline
        $C=0.02$ & 10261 & 2570.3 $s$ & 0.250 $s$\\
        \hline
        $C=0.1$ & 10302 & 2128.3 $s$ & 0.206 $s$\\
        \hline
        no OE & 40622 & 4029.0 $s$ & 0.099 $s$\\
        \hline
    \end{tabular}
    \caption{\textit{Double Mach Reflection}: comparison of computation time with different shock indicator threshold $C$. In the last row, the OE procedure is not used.}
    \label{tab:dmr}
\end{table}
%The figure\eqref{fig:dmr2} shows the profiles of shock indicator and density with  $C=0, 0.02, 0.1$. As we can see, the density profiles of the cases with  $C=0, 0.02, 0.1$ show very minor differences. However, the density profile of no OE shows severe numerical oscillation.
Figure~\eqref{fig:dmr2} presents the density fields and corresponding shock-indicator distributions for $C=0$, $0.02$, and $0.1$.
The density solutions obtained with different threshold values exhibit only minor differences, indicating that moderate variations of $C$ do not significantly affect the resolved flow structures.
In contrast, the solution computed without the OE procedure exhibits pronounced nonphysical oscillations, highlighting the necessity of oscillation control for robust simulations.
\begin{figure}[h]
  \centering
  \begin{subfigure}{0.49\textwidth}
    \centering
    \includegraphics[width=\linewidth]{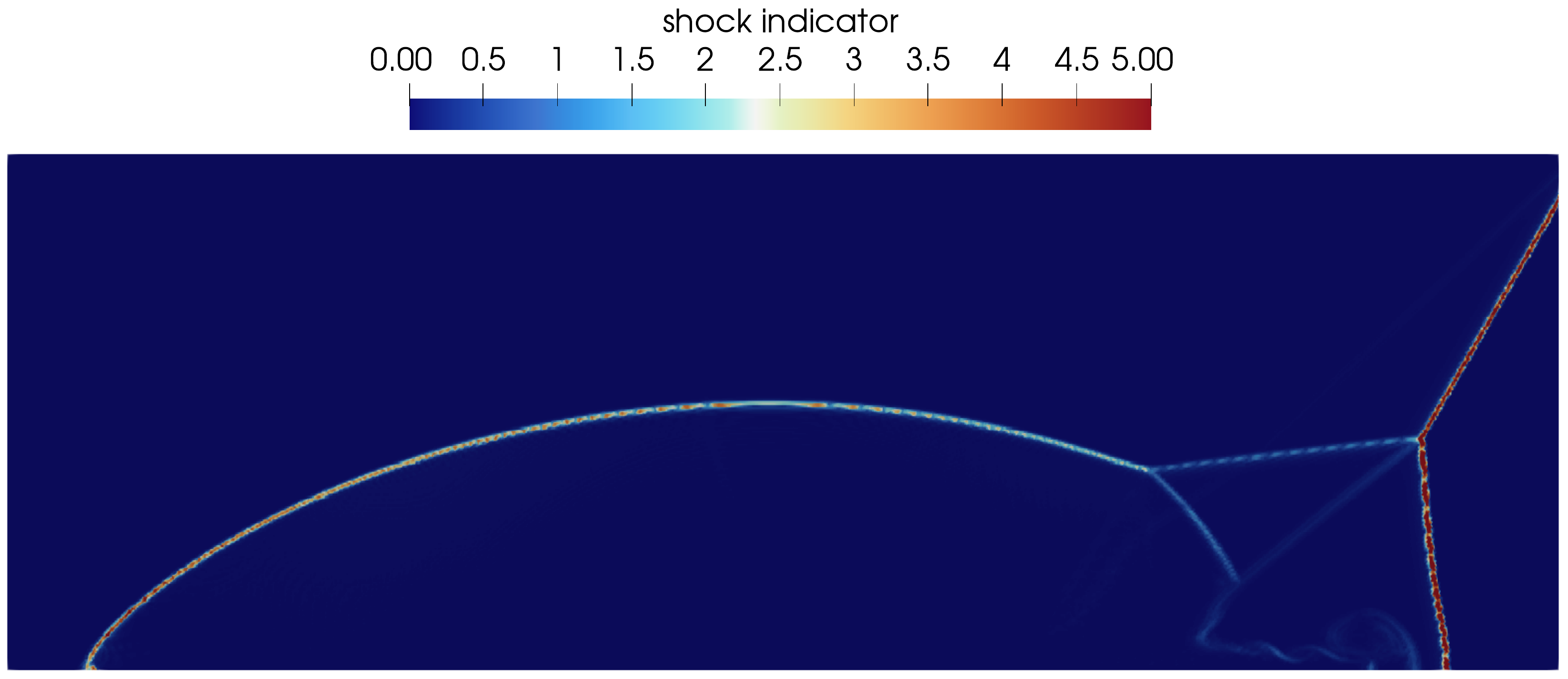}
    \caption{shock indicator $C=0$}
    %\label{fig:vortex_ic}
  \end{subfigure}
  \hspace{0.0cm}  % 行间距
  \begin{subfigure}{0.49\textwidth}
    \centering
    \includegraphics[width=\linewidth]{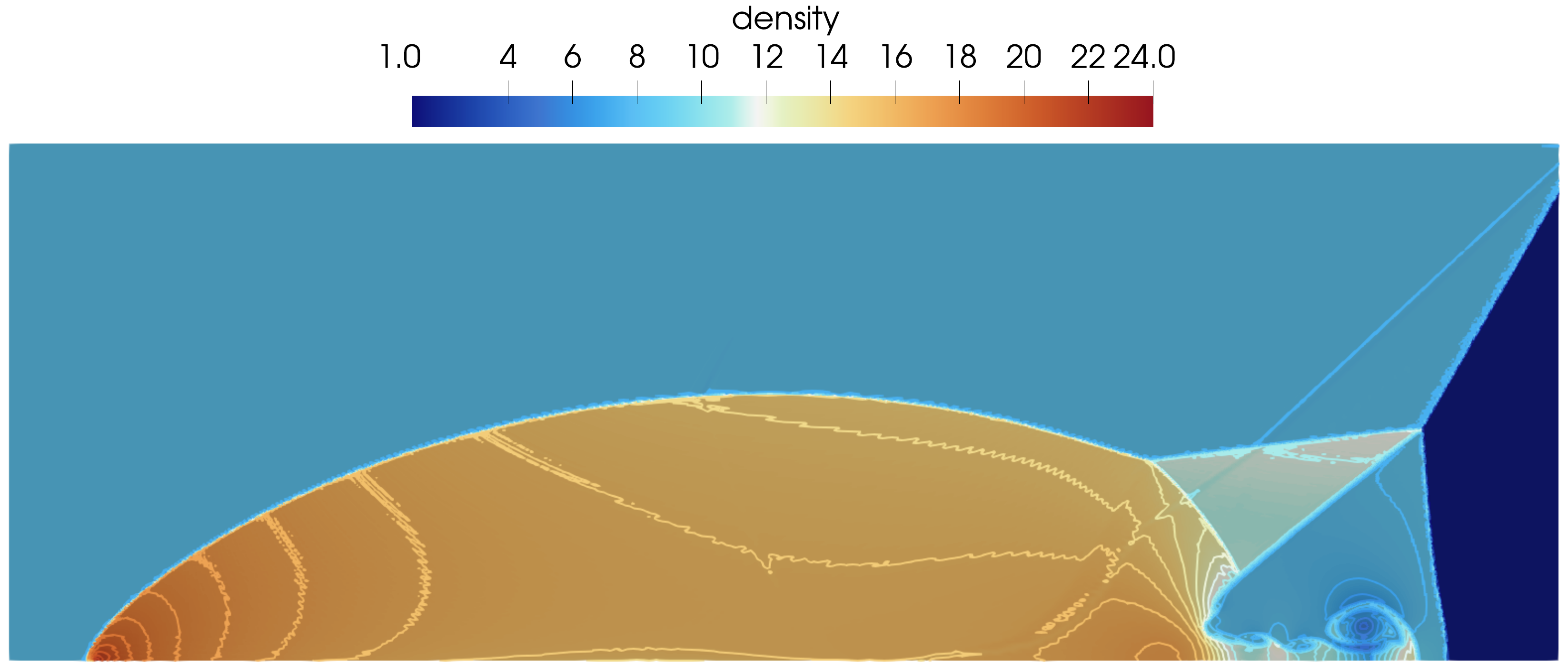}
    \caption{density $C=0$}
    %\label{fig:vortex_mesh}
  \end{subfigure}

  \vspace{0.cm}  % 行间距

 % 第二行
  \begin{subfigure}{0.49\textwidth}
    \centering
    \includegraphics[width=\linewidth]{dmr/indicator_C=0.02.pdf}
    \caption{shock indicator $C=0.02$}
    %\label{fig:vortex_ic}
  \end{subfigure}
  \hspace{0.0cm}  % 行间距
  \begin{subfigure}{0.49\textwidth}
    \centering
    \includegraphics[width=\linewidth]{dmr/density_contour_C=0.02.pdf}
    \caption{density $C=0.02$}
    %\label{fig:vortex_mesh}
  \end{subfigure}

  \vspace{0.cm}  % 行间距

 % 第二行
  \begin{subfigure}{0.49\textwidth}
    \centering
    \includegraphics[width=\linewidth]{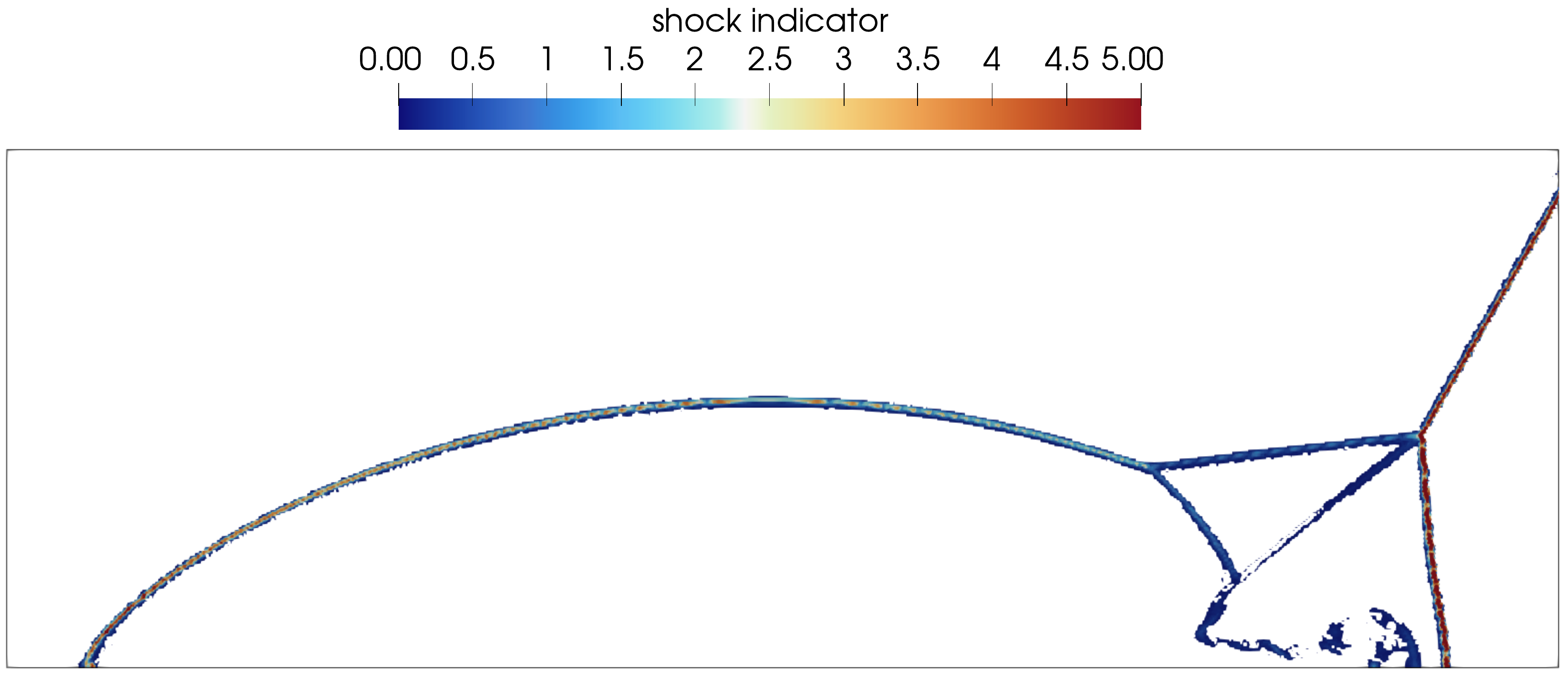}
    \caption{shock indicator $C=0.1$}
    %\label{fig:vortex_ic}
  \end{subfigure}
  \hspace{0.0cm}  % 行间距
  \begin{subfigure}{0.49\textwidth}
    \centering
    \includegraphics[width=\linewidth]{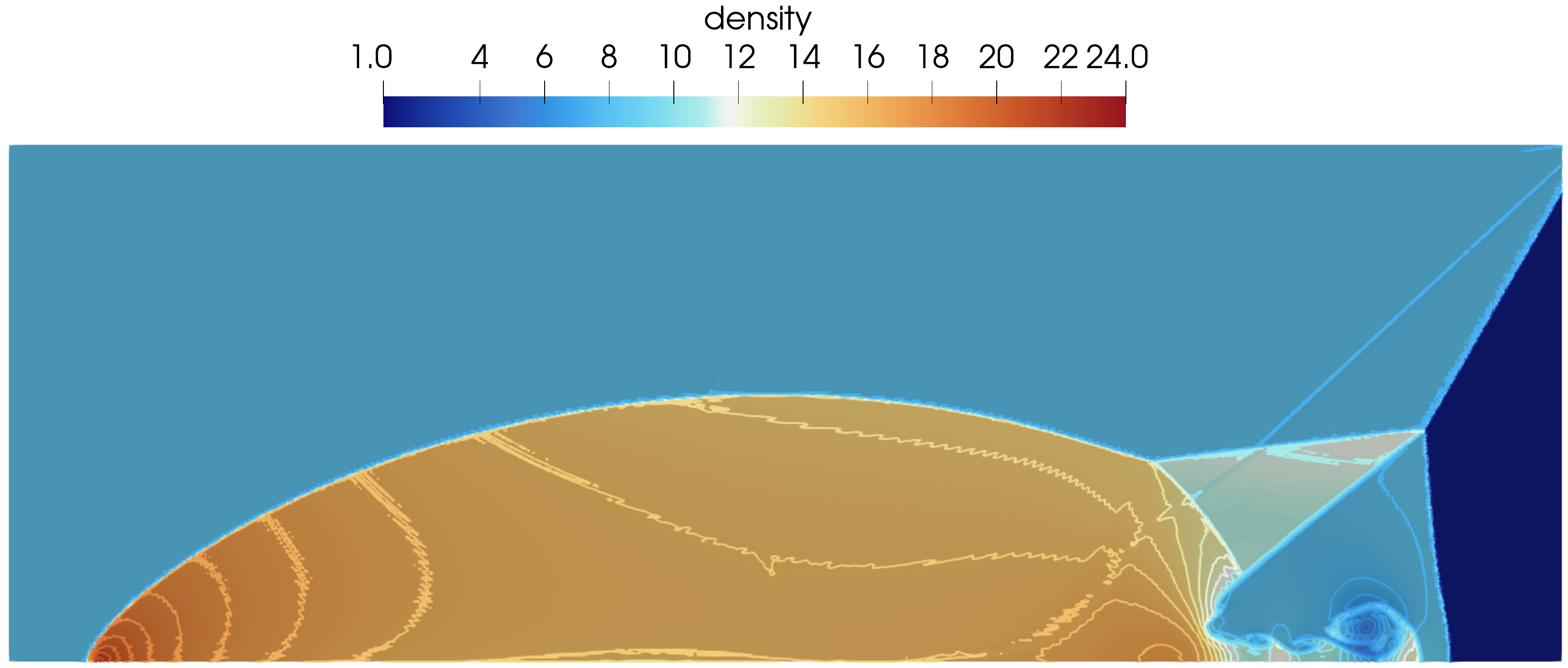}
    \caption{density $C=0.1$}
    %\label{fig:vortex_mesh}
  \end{subfigure}

  \vspace{0.cm}  % 行间距
  
  % 第二行
  \begin{subfigure}{0.49\textwidth}
    \centering
    \includegraphics[width=\linewidth]{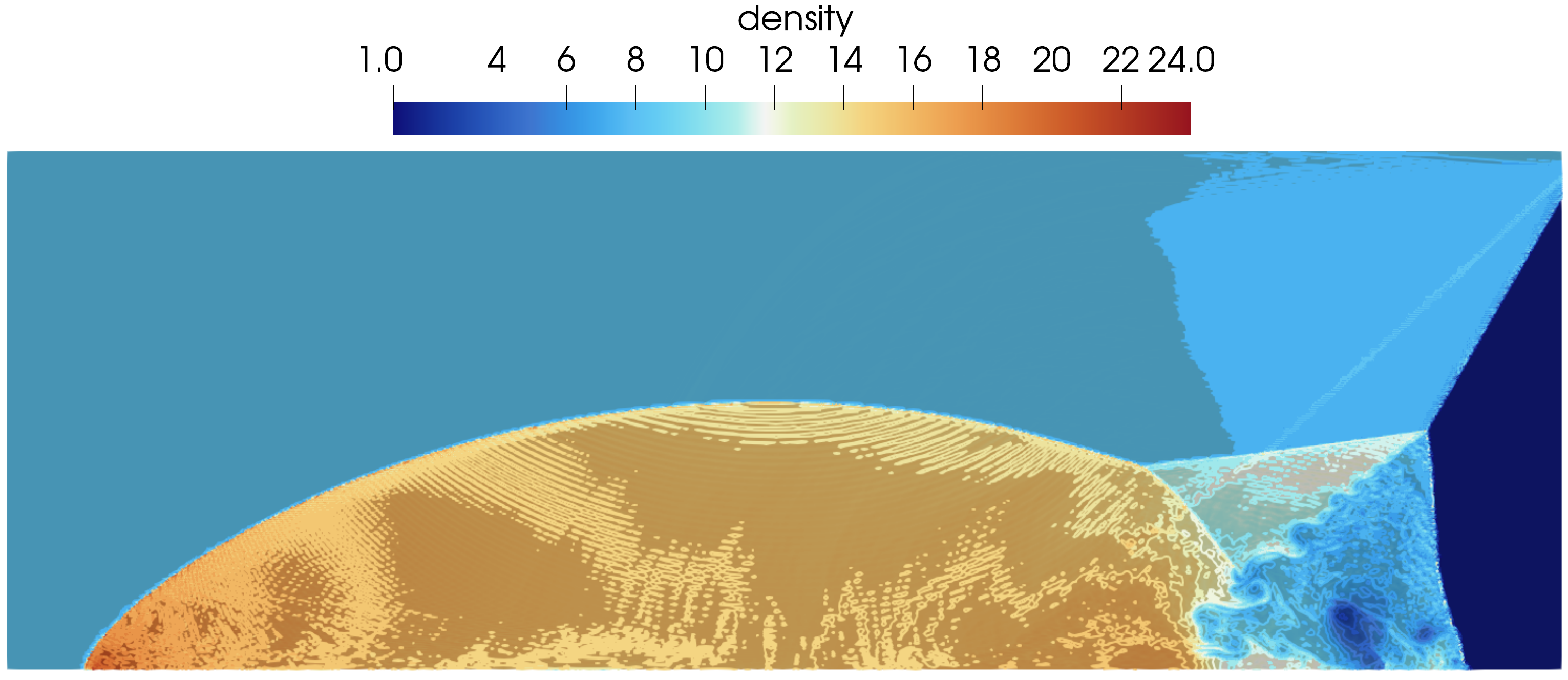}
    \caption{no OE procedure}
    %\label{fig:sub3}
  \end{subfigure}
  \caption{\textit{Double Mach Reflection}: Shock indicator profile and density profile with different threshold value $C$.}
  \label{fig:dmr2}
\end{figure}

%%%%%%%%%%%%%%%%%%%%%%%%%%%%%%%%%%%%%%%%%%%%%%%%%%%%%
%%%%%%%%%%%%%%%%%%%%%%%%%%%%%%%%%%%%%%%%%%%%%%%%%%%%%
\subsection{Supersonic Flow Around a Two-Dimensional Cylinder}

We next consider the problem of supersonic flow past a circular
cylinder, which has been extensively studied in the literature
\cite{guermond2018second,deng2023new,nazarov2017numerical,chen2025high}.
This is a highly challenging benchmark, as it involves multiple
interacting flow features, including a strong bow shock, oblique
shocks, a downstream ``fishtail'' shock, Kelvin--Helmholtz
instabilities, and their nonlinear interactions. Accurately resolving
these phenomena requires a numerical method that is both robust and
minimally dissipative.

The computational domain is the rectangular region
$\Omega=[-0.6,3.4]\times[-1,1]$, containing a circular cylinder of
diameter $D=0.5$ centered at $(0,0)$. The mesh is generated using the
open-source software \texttt{Gmsh}
\cite{geuzaine2009gmsh,remacle2012blossom}. We employ high-order
curvilinear quadrilateral elements whose geometric order matches the
polynomial degree of the DG approximation space
$\mathbb Q_N$. The mesh is generated with a characteristic mesh size of $h=0.0085$, and consists of $126{,}082$ elements. Using
third-order elements ($N=3$), the total number of degrees of freedom is
$2{,}017{,}312$.

The initial condition corresponds to a uniform Mach~3 supersonic inflow
with $\rho=1$, $p=1$, and velocity $\bm u=(3.55,0)$. Inflow and outflow
boundary conditions are imposed on the left and right boundaries,
respectively. Slip wall (reflective) boundary conditions are applied on
the upper and lower boundaries, as well as on the surface of the
cylinder. We use $\mathbb Q_3$ for the DG approximation polynomial.
Here, we vary the OE scale factor $s$ to examine its impact on the performance of the OE procedure described in Algorithm~\ref{alg:oe_curve}. The shock indicator threshold remains fixed at $C=0.02$.

Figure~\ref{fig:cylinder1} presents results obtained with OE scaling
factor $s=0.1$ at times $t=0.5$, $2.5$, $3.0$, and $4.5$, together with
the corresponding shock indicator distributions. To highlight the shock structures and shear layers, the flow field is visualized using a Schlieren-like representation define by $\log(1+|\nabla\rho|)$:

Initially, the Mach~3 inflow impinges on the cylinder, generating a
strong bow shock that propagates toward the upper and lower walls. Two
oblique shocks form near the downstream sides of the cylinder and
travel toward the outflow boundary. In addition, a distinct downstream
shock, commonly referred to as the ``fishtail'' shock
\cite{chen2025high,nazarov2017numerical}, develops in the wake region.
Flow separation occurs near the onset points of the oblique shocks,
leading to a recirculating region behind the cylinder. Farther
downstream, the wake becomes unsteady and a long trailing vortex
structure emerges.

At later times ($t\gtrsim2$), Kelvin--Helmholtz instabilities develop
along the contact lines emanating from the shock-wave triple points
near the upper and lower walls. The successful resolution of these
small-scale roll-up structures indicates that the numerical diffusion
introduced by the OE procedure remains sufficiently low. Overall, the
computed flow structures are in good agreement with the results
reported by Guermond et~al.~\cite{guermond2018second} and Deng
\cite{deng2023new}. The shock indicator distributions, shown in the
right column of Figure~\ref{fig:cylinder1}, are well aligned with the
locations of strong shocks, confirming that the OE procedure is
selectively activated in nonsmooth regions.
\begin{figure}[htbp]
  \centering
  % 第一行
  \begin{subfigure}{0.44\textwidth}
    \centering
    \includegraphics[width=\linewidth]{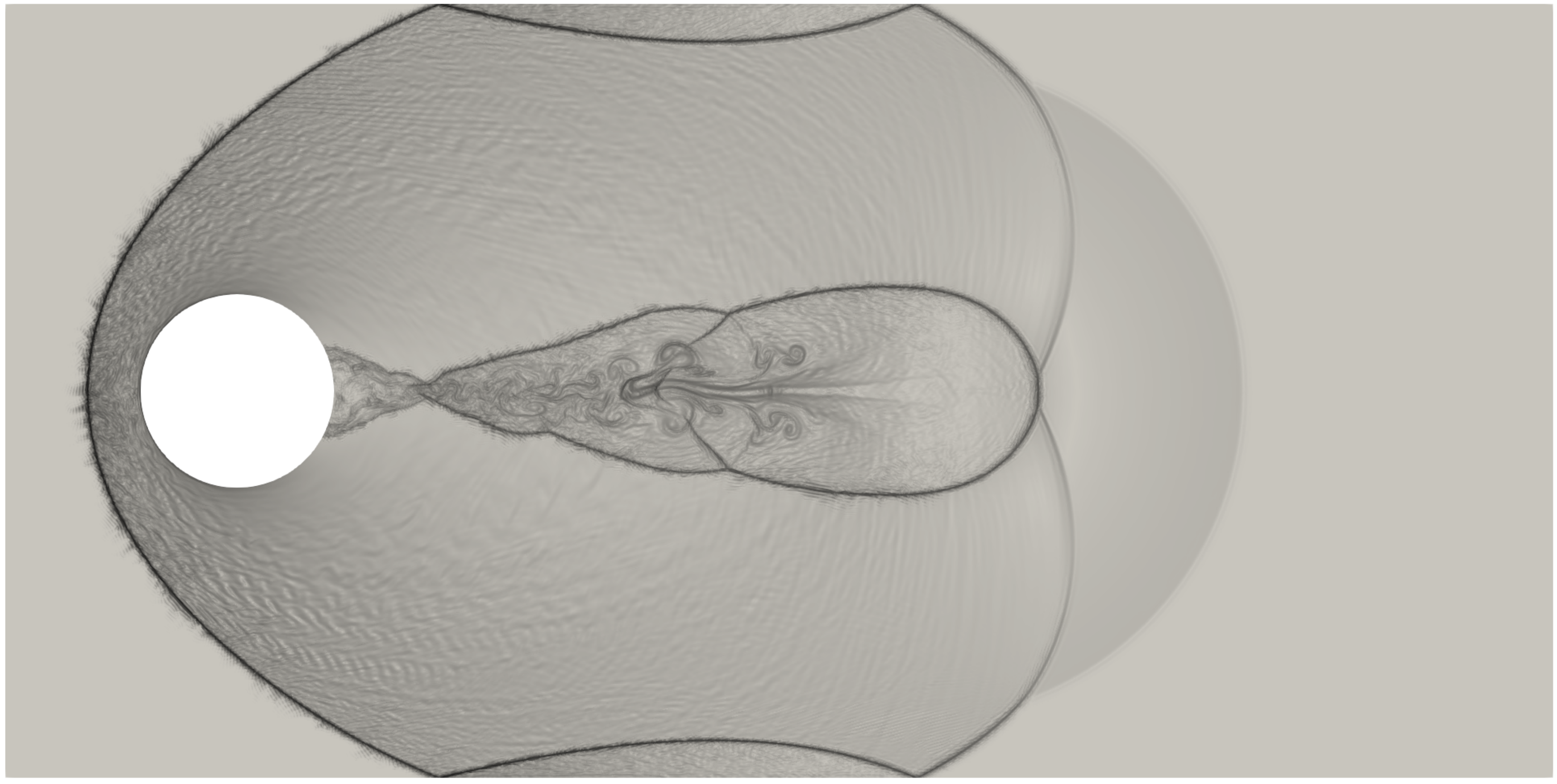}
    \caption{density at $t=0.5$}
    %\label{fig:sub1}
  \end{subfigure}
  \hspace{0.02\textwidth}
  \begin{subfigure}{0.49\textwidth}
    \centering
    \includegraphics[width=\linewidth]{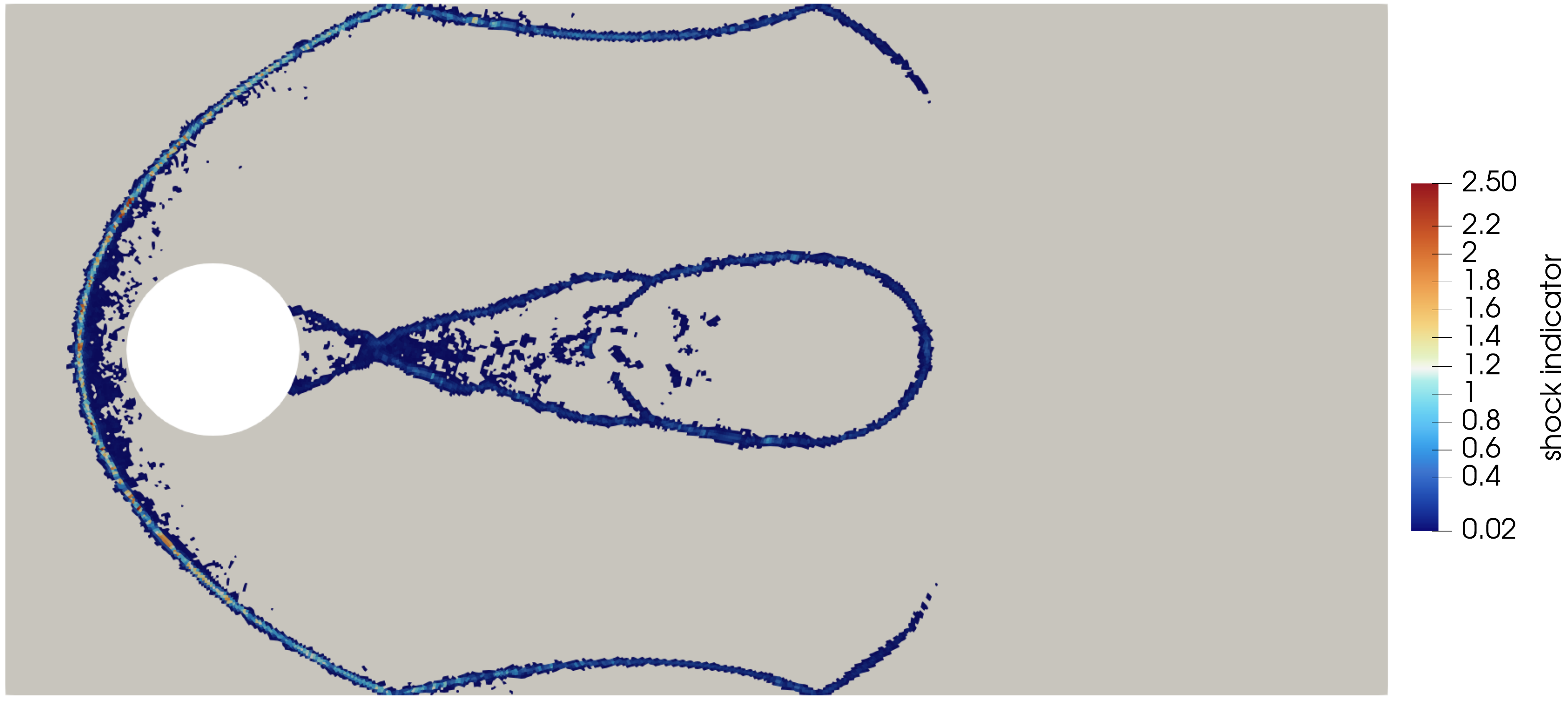}
    \caption{shock indicator at $t=0.5$}
    %\label{fig:sub2}
  \end{subfigure}
  
  \vspace{0.5cm}  % 行间距
  
  % 第二行
  \begin{subfigure}{0.44\textwidth}
    \centering
    \includegraphics[width=\linewidth]{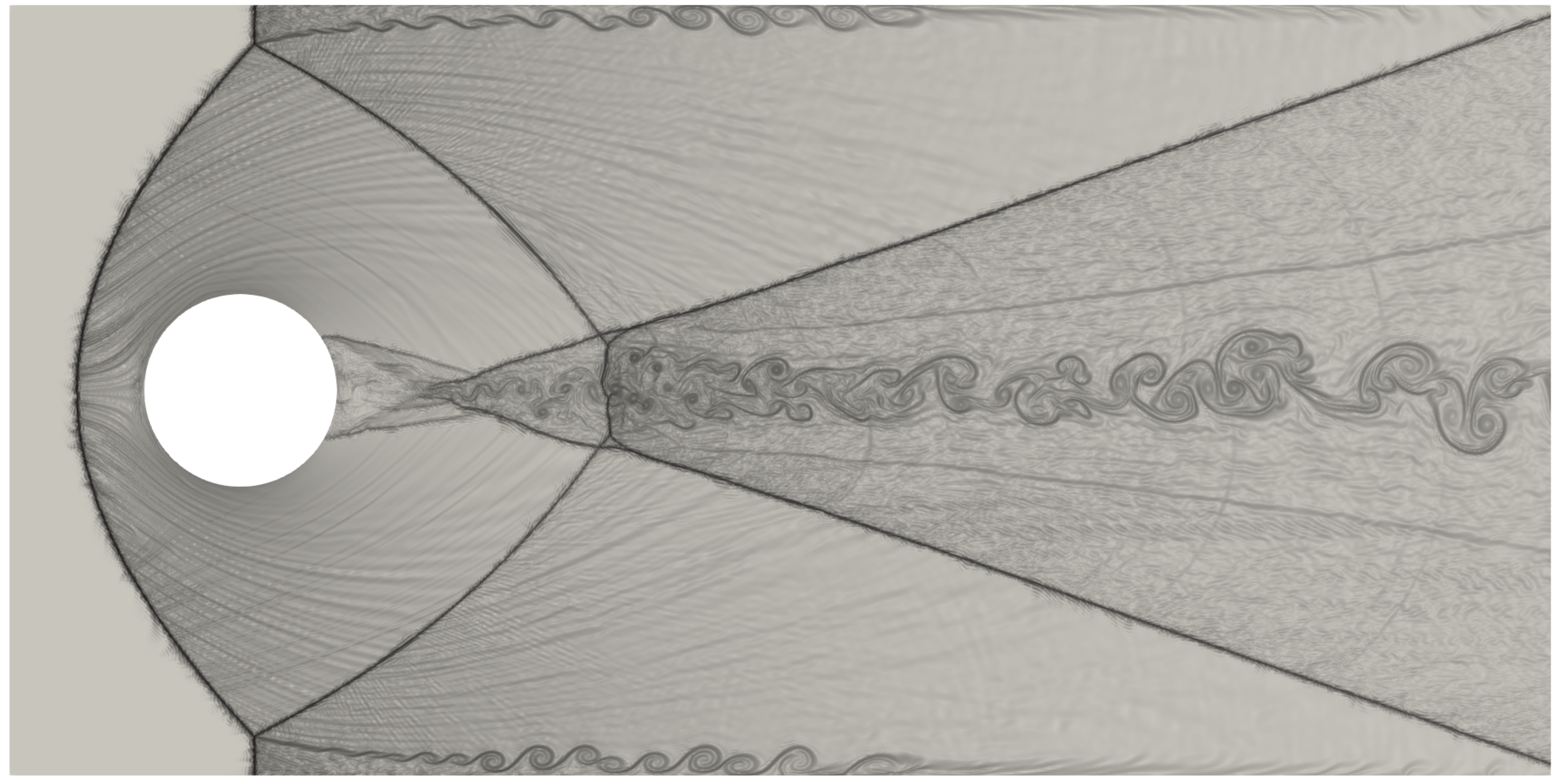}
    \caption{density at $t=2$}
    %\label{fig:sub3}
  \end{subfigure}
  \hspace{0.02\textwidth}
  \begin{subfigure}{0.49\textwidth}
    \centering
    \includegraphics[width=\linewidth]{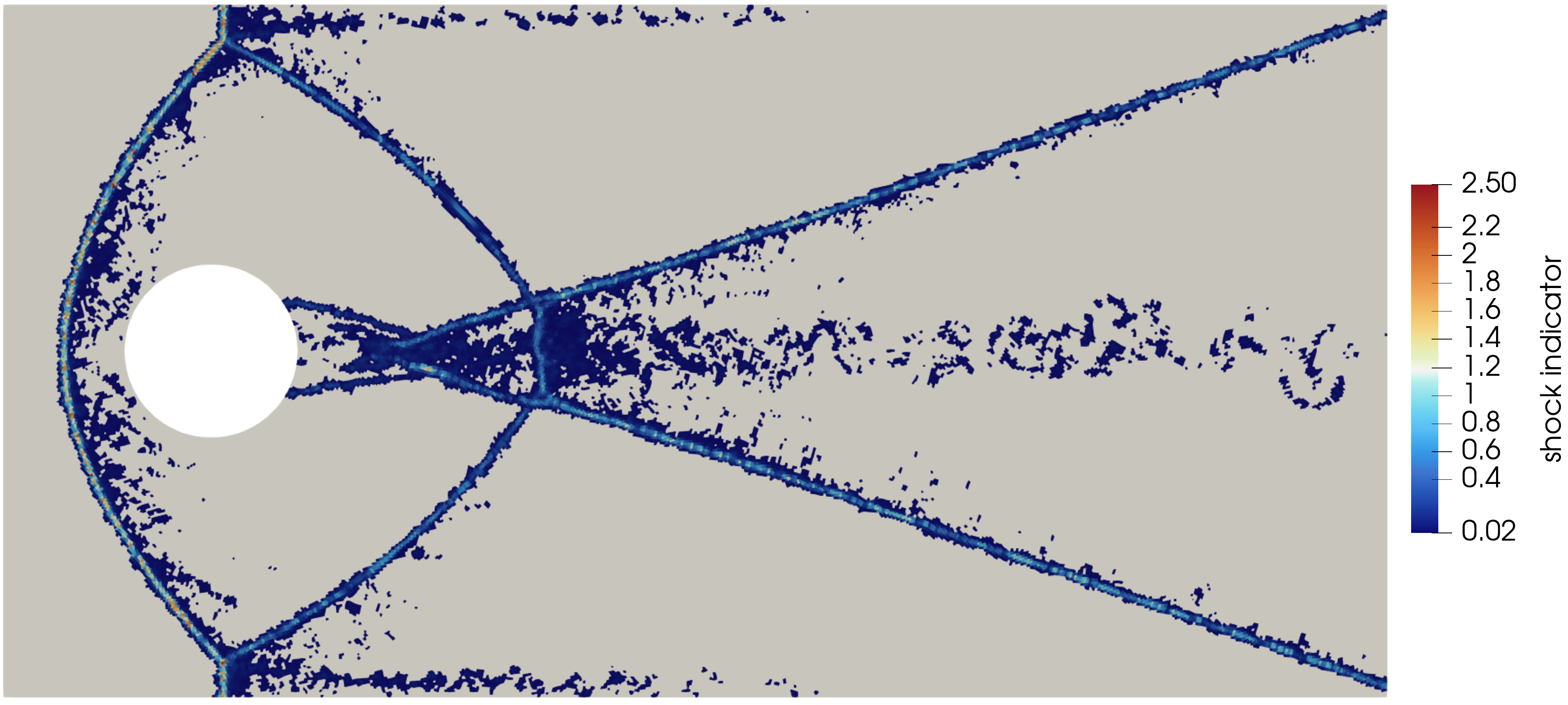}
    \caption{shock indicator at $t=2$}
    %\label{fig:sub4}
  \end{subfigure}
  
  \vspace{0.5cm}
  
  % 第三行
  \begin{subfigure}{0.44\textwidth}
    \centering
    \includegraphics[width=\linewidth]{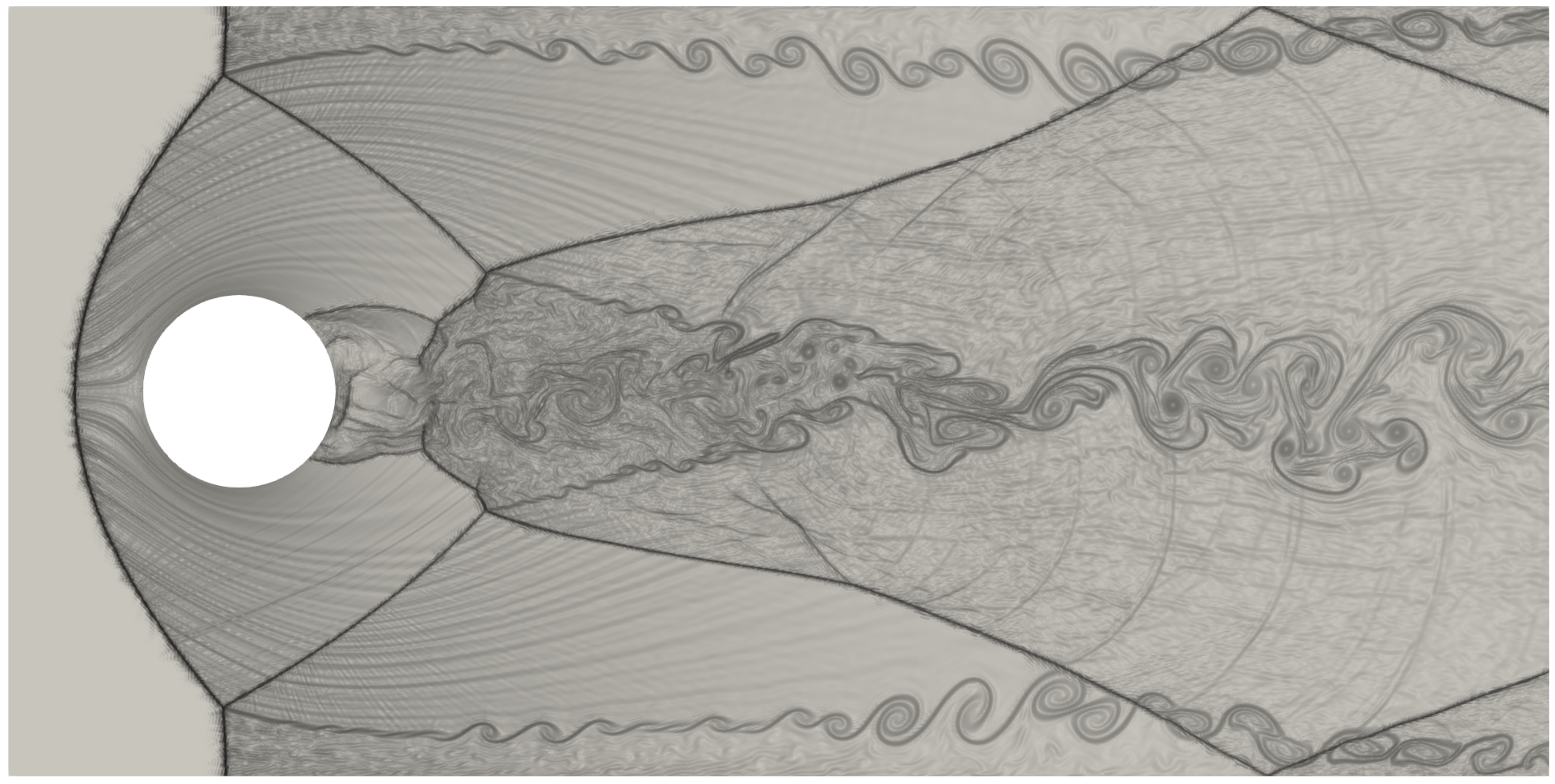}
    \caption{density at $t=3$}
    %\label{fig:sub5}
  \end{subfigure}
  \hspace{0.02\textwidth}
  \begin{subfigure}{0.49\textwidth}
    \centering
    \includegraphics[width=\linewidth]{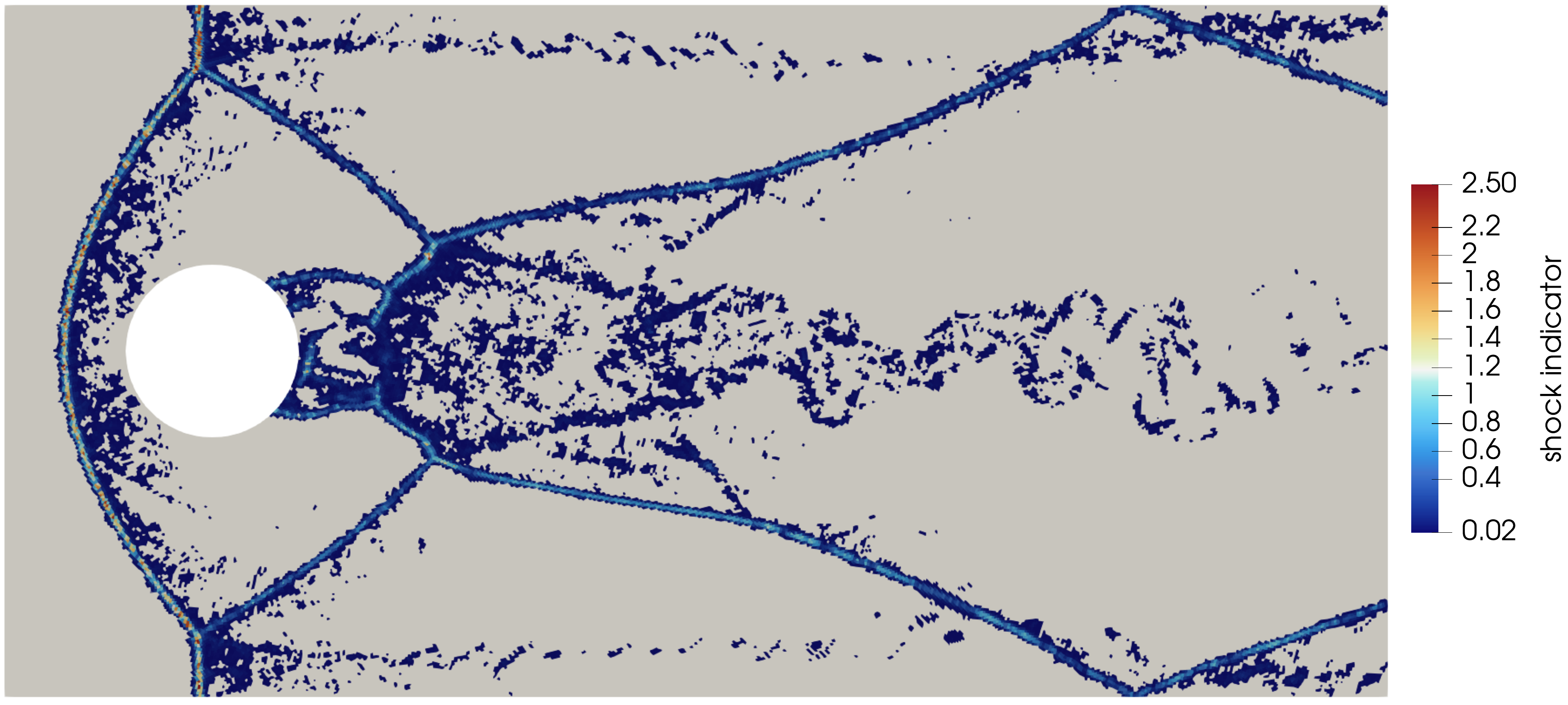}
    \caption{shock indicator at $t=3$}
    %\label{fig:sub6}
  \end{subfigure}
  
  \vspace{0.5cm}
  
  % 第四行
  \begin{subfigure}{0.44\textwidth}
    \centering
    \includegraphics[width=\linewidth]{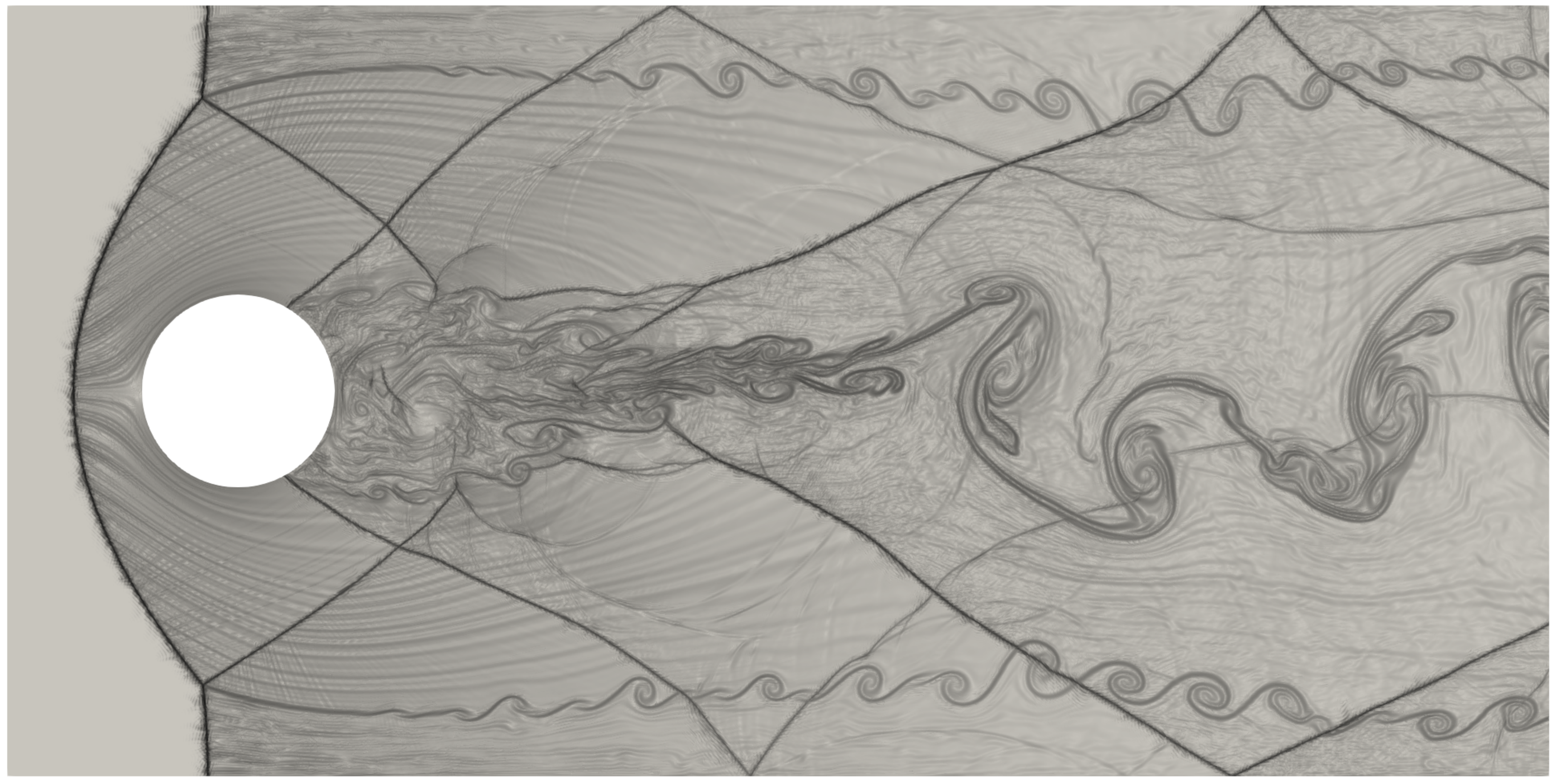}
    \caption{density at $t=4.5$}
    %\label{fig:sub7}
  \end{subfigure}
  \hspace{0.02\textwidth}
  \begin{subfigure}{0.49\textwidth}
    \centering
    \includegraphics[width=\linewidth]{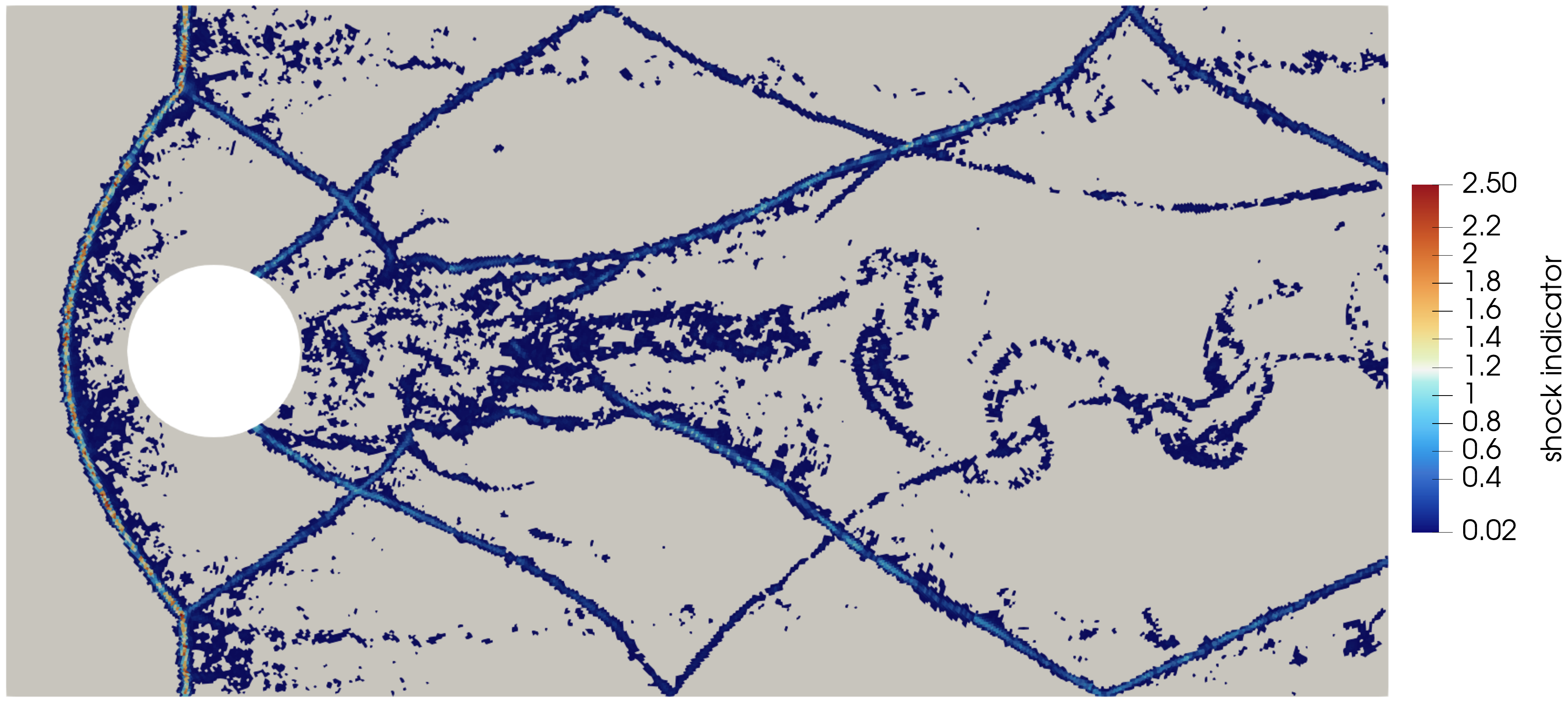}
    \caption{shock indicator at $t=4.5$}
    %\label{fig:sub8}
  \end{subfigure}
  
  \caption{\textit{Supersonic Flow Around a Two-Dimensional Cylinder:} the figures on the left row are Schlieren-like plot of density field; the figures on the right row are the distributions of shock indicator\eqref{eq:oe_indicator}.}
  \label{fig:cylinder1}
\end{figure}

We next compare solutions obtained with two different values of $s$.
Figure~\ref{fig:cylinder2} presents representative results for
$s = 0.1$ and $s = 0.8$.

As shown in Figure~\ref{fig:cylinder2}, increasing the OE scaling factor
leads to a noticeable increase in numerical dissipation. In
particular, for $s=0.8$, the Kelvin--Helmholtz instabilities developing
along the contact lines near the upper and lower walls are significantly
suppressed, indicating excessive numerical diffusion. In contrast, for
the smaller value $s=0.1$, these instabilities are well resolved,
demonstrating that the OE procedure remains sufficiently non-dissipative
to capture fine-scale flow features.

On the other hand, in the vicinity of the bow shock ahead of the
cylinder, the case $s=0.1$ exhibits more pronounced numerical
oscillations, whereas these oscillations are strongly damped when
$s=0.8$. As a result of the different levels of numerical diffusion,
the overall temporal evolution of the flow differs between the two
cases, although the main shock structures remain qualitatively similar.
Similar sensitivity to numerical dissipation has been reported in
\cite{chen2025high}, where variations in grid resolution and numerical
schemes lead to noticeable differences in small-scale flow details,
while the overall flow topology is preserved.
\begin{figure}[H]
  \centering
  % 第一行
  \begin{subfigure}{0.45\textwidth}
    \centering
    \includegraphics[width=\linewidth]{cylinder/density_t=0.5.pdf}
    \caption{OE scale factor $s=0.1$ at $t=0.5$}
    %\label{fig:sub1}
  \end{subfigure}
  \hspace{0.02\textwidth}
  \begin{subfigure}{0.45\textwidth}
    \centering
    \includegraphics[width=\linewidth]{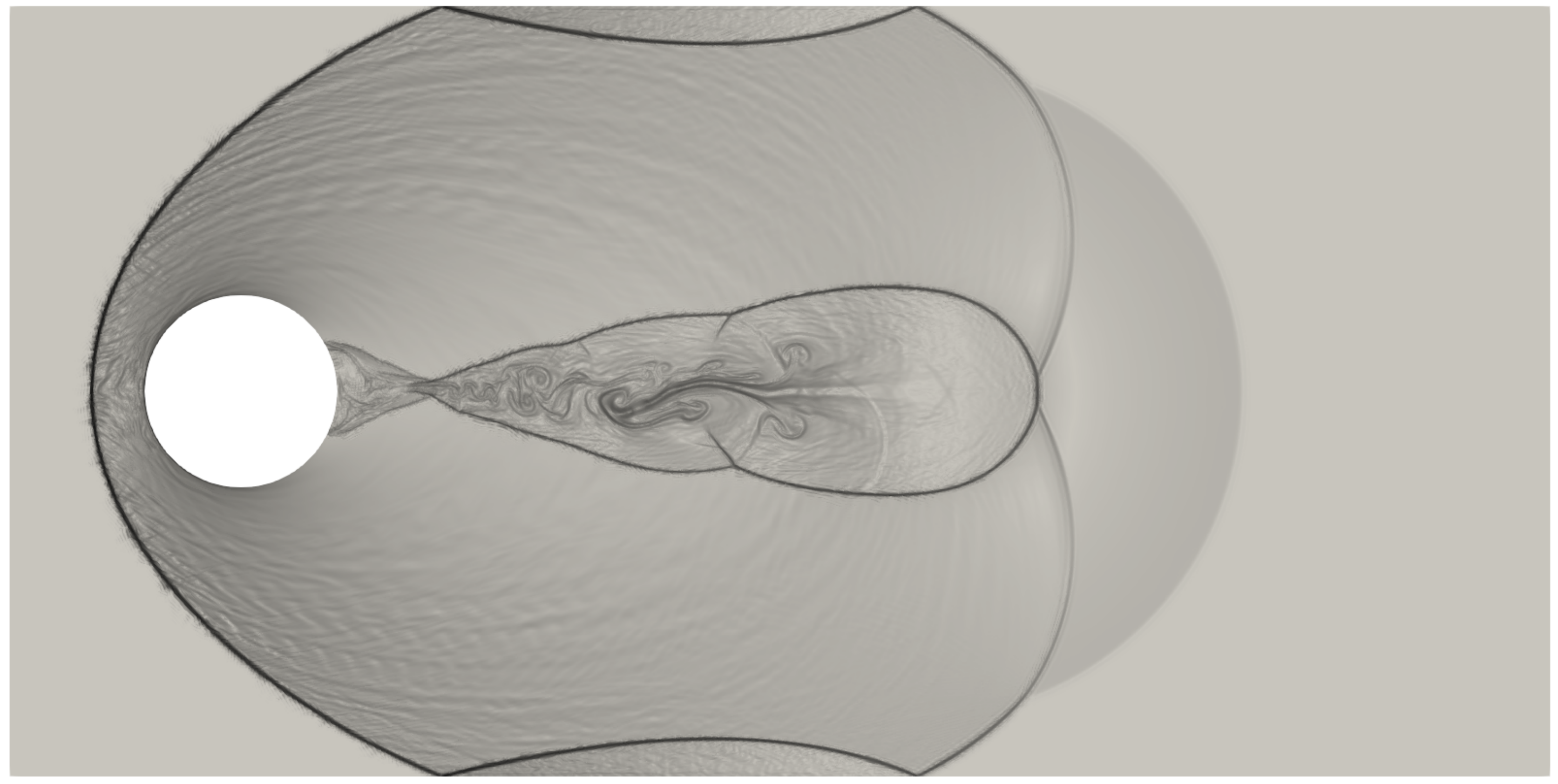}
    \caption{OE scale factor $s=0.8$ at $t=0.5$}
    %\label{fig:sub2}
  \end{subfigure}
  
  \vspace{0.2cm}  % 行间距
  
  % 第二行
  \begin{subfigure}{0.45\textwidth}
    \centering
    \includegraphics[width=\linewidth]{cylinder/density_t=3.pdf}
    \caption{OE scale factor $s=0.1$ at $t=3$}
    %\label{fig:sub3}
  \end{subfigure}
  \hspace{0.02\textwidth}
  \begin{subfigure}{0.45\textwidth}
    \centering
    \includegraphics[width=\linewidth]{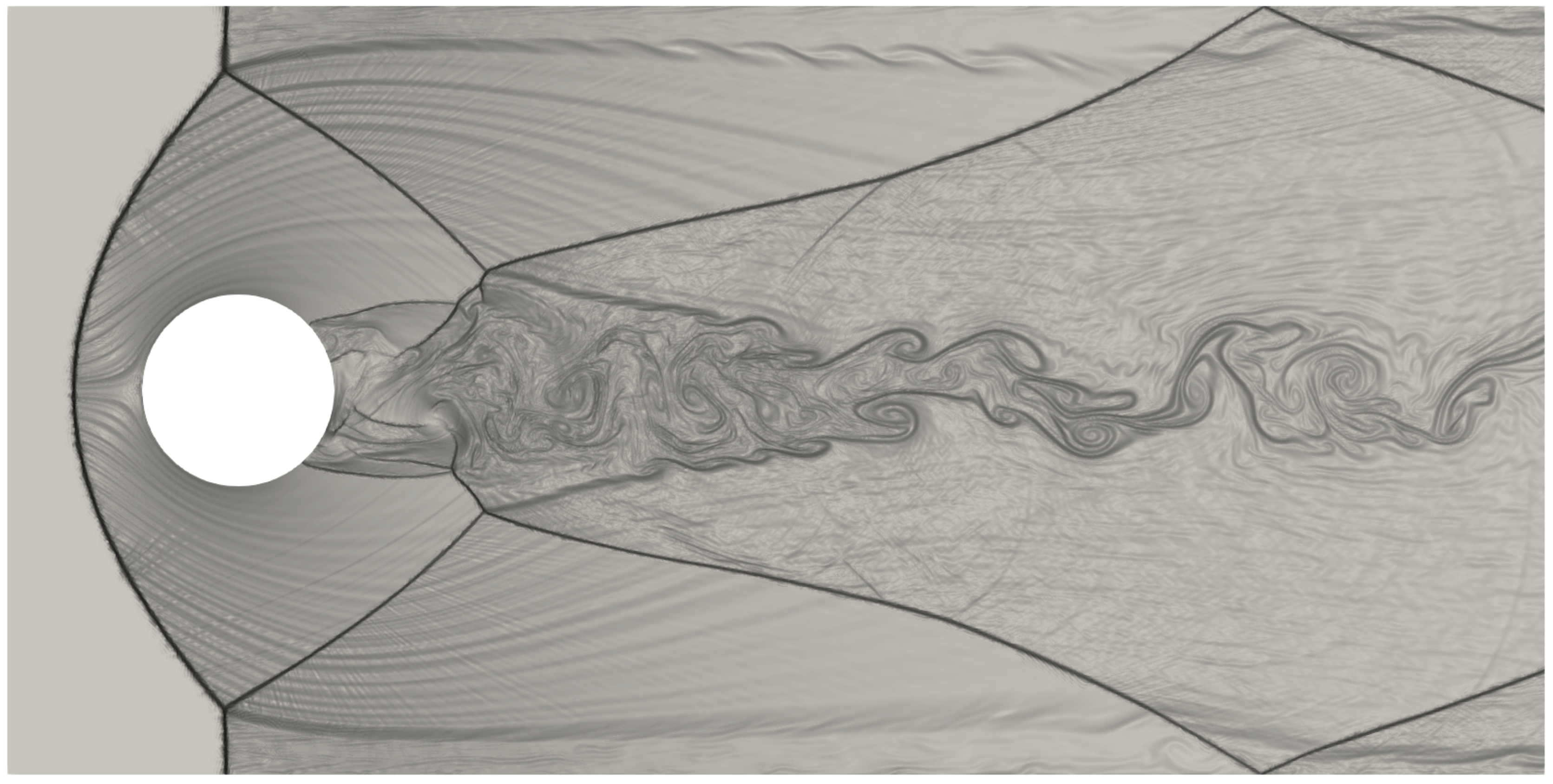}
    \caption{OE scale factor $s=0.8$ at $t=3$}
    %\label{fig:sub4}
  \end{subfigure}

  \vspace{0.2cm}  % 行间距
  % 第二行
  \begin{subfigure}{0.45\textwidth}
    \centering
    \includegraphics[width=\linewidth]{cylinder/density_t=4.5.pdf}
    \caption{OE scale factor $s=0.1$ at $t=4.5$}
    %\label{fig:sub3}
  \end{subfigure}
  \hspace{0.02\textwidth}
  \begin{subfigure}{0.45\textwidth}
    \centering
    \includegraphics[width=\linewidth]{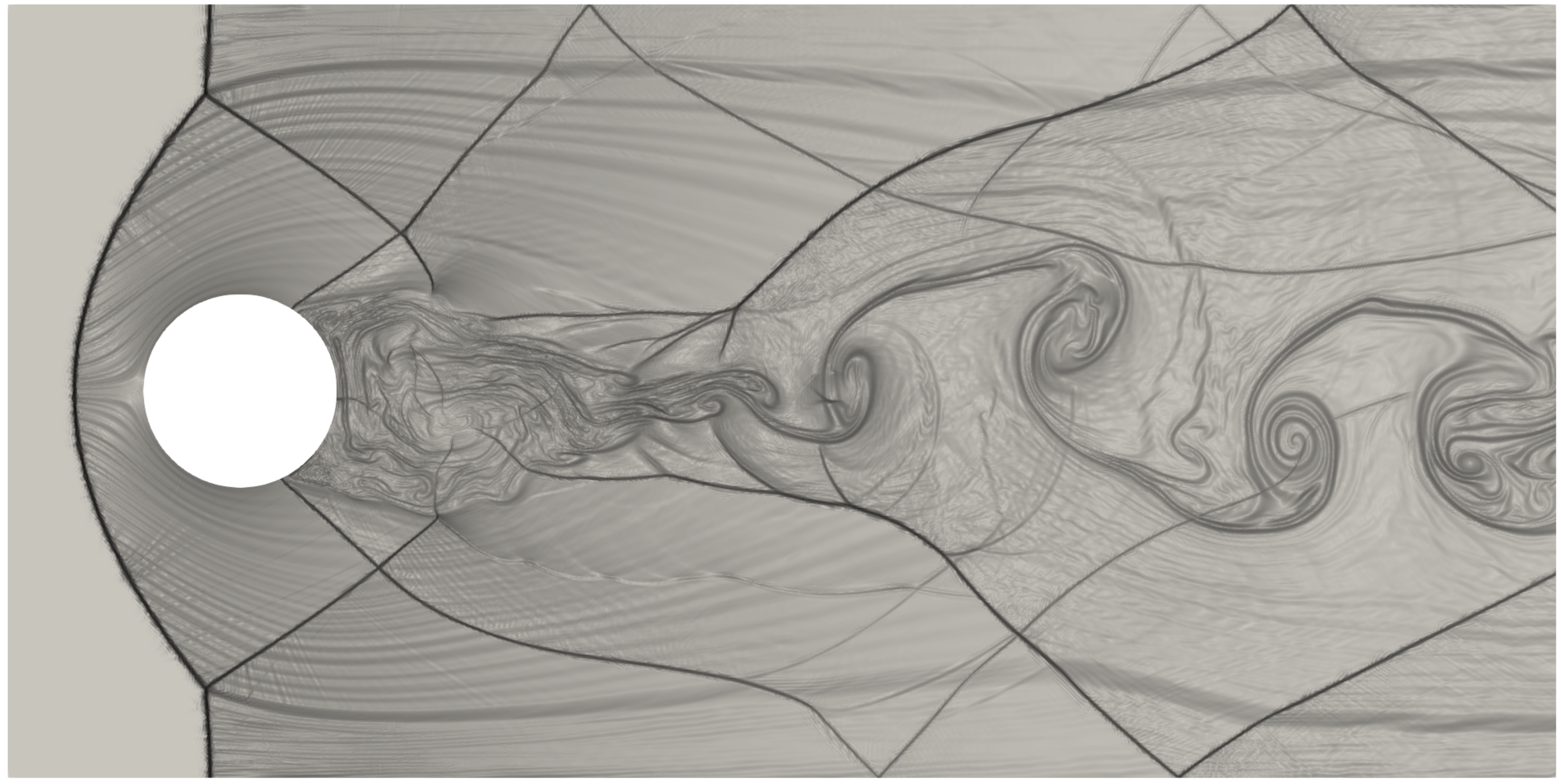}
    \caption{OE scale factor $s=0.8$ at $t=4.5$}
    %\label{fig:sub4}
  \end{subfigure}
  
  \caption{\textit{Supersonic Flow Around a Two-Dimensional Cylinder:} the figures on the left row are computed with OE scale factor $s=0.1$; the figures on the right row are computed with OE scale factor $s=0.8$.}
  \label{fig:cylinder2}
\end{figure}
\subsection{Explosion in a Domain with Multiple  Cylinders}
We next consider an explosion problem in a domain containing multiple
cylindrical obstacles, which serves as a demanding benchmark for
assessing the robustness of numerical schemes on complex geometries.
Similar configurations have been studied in
\cite{nazarov2017numerical,liska2003comparison,toro2013riemann}.

The computational domain is a two-dimensional circular region of
radius $2$ centered at the origin. Eight circular cylinders of radius
$0.3$ are placed inside the domain, with their centers located at a
distance $1.4$ from the origin and uniformly distributed at angular
positions
\[
\theta_k = \frac{k\pi}{4}, \qquad k=0,\dots,7.
\]
The mesh consists of second-order curvilinear quadrilateral elements
generated using \texttt{Gmsh}, with a characteristic mesh size
$h=0.005$. The total number of elements is $215,805$ with order 2.

The initial condition corresponds to a circular explosion. Inside a
disk of radius $\sqrt{0.4}$ centered at the origin, the gas is set to
$\rho=1$, $p=1$, and $\bm u=(0,0)$. Outside this disk, the gas is
initialized with $\rho=0.125$, $p=0.1$, and $\bm u=(0,0)$. Reflective
wall boundary conditions are imposed on all boundaries, including the
outer circular boundary and the surfaces of the eight interior cylinders. We use $\mathbb Q_2$ for the DG polynomial space, and set parameter $s=0.8$ and $C=0.02$ in this test.

Figure~\ref{fig:explosion} presents Schlieren-type visualizations of
the density field at several representative times. Initially, a strong
circular shock is generated at the interface between the two gases and
propagates radially outward. A circular contact discontinuity follows
the shock at a slower speed, while a rarefaction wave travels inward
toward the origin.

As the outward-propagating shock interacts with the cylindrical
obstacles, part of the wave is reflected by the cylinder surfaces,
while the remaining portion passes through the gaps between cylinders
and subsequently impinges on the outer circular boundary. After
reflection from the outer boundary, the shock wave propagates back
toward the center of the domain. Meanwhile, the circular contact
discontinuity weakens as it expands, eventually reverses direction,
and moves inward. This contact interface becomes unstable and breaks
down into small-scale vortical structures.

Overall, the computed flow evolution and wave interactions are in good
agreement with previously reported results
\cite{nazarov2017numerical}.
\begin{figure}[htbp]
  \centering
  % 第一行
  \begin{subfigure}{0.32\textwidth}
    \centering
    \includegraphics[width=\linewidth]{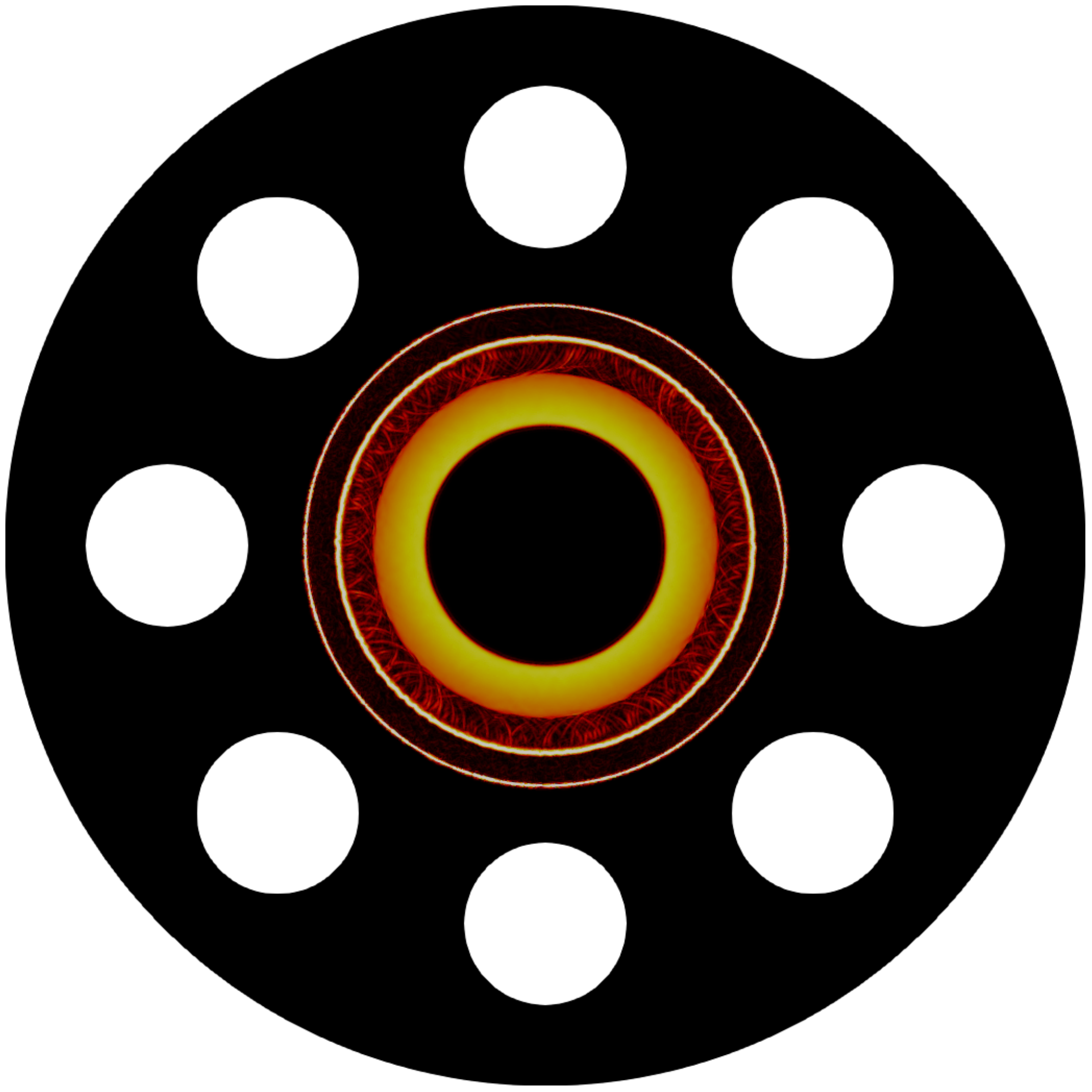}
    \caption{$t=0.15$}
  \end{subfigure}
  %\hspace{0.02\textwidth}
  \begin{subfigure}{0.32\textwidth}
    \centering
    \includegraphics[width=\linewidth]{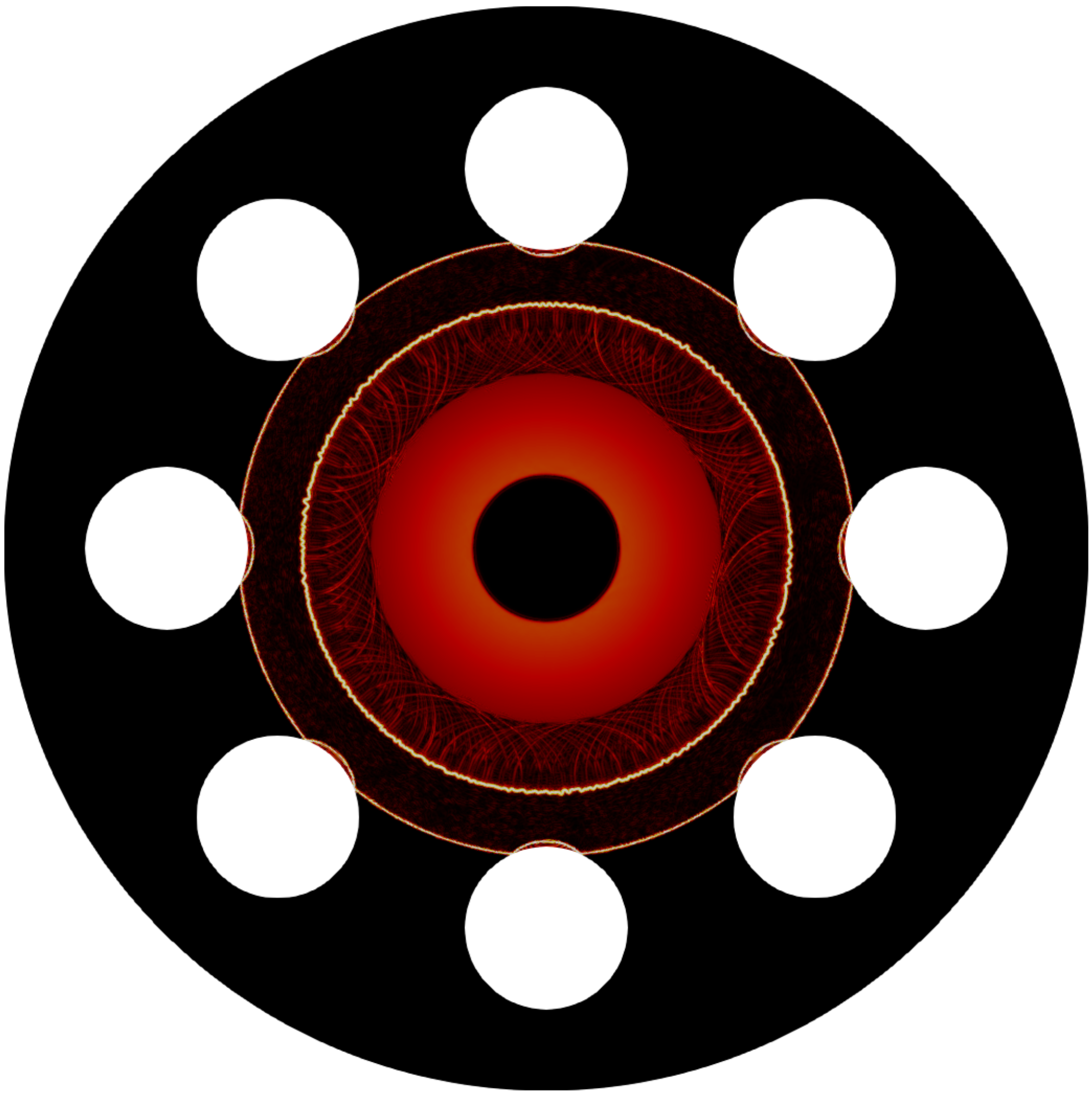}
    \caption{$t=0.3$}
  \end{subfigure}
  %\hspace{0.02\textwidth}
  \begin{subfigure}{0.32\textwidth}
    \centering
    \includegraphics[width=\linewidth]{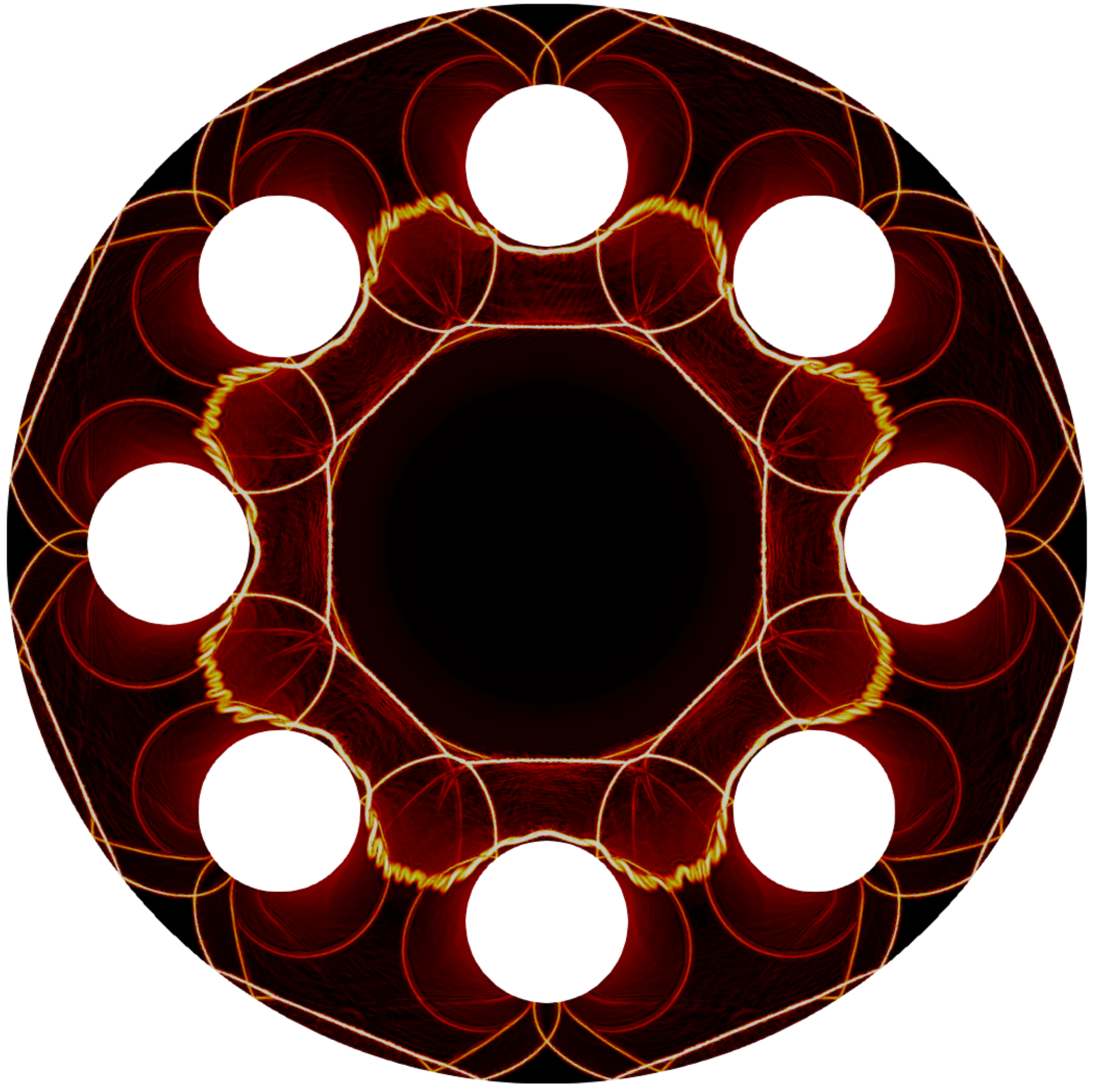}
    \caption{$t=0.9$}
  \end{subfigure}
  
  \vspace{0.5cm}  % 行间距
  
  % 第二行
  \begin{subfigure}{0.32\textwidth}
    \centering
    \includegraphics[width=\linewidth]{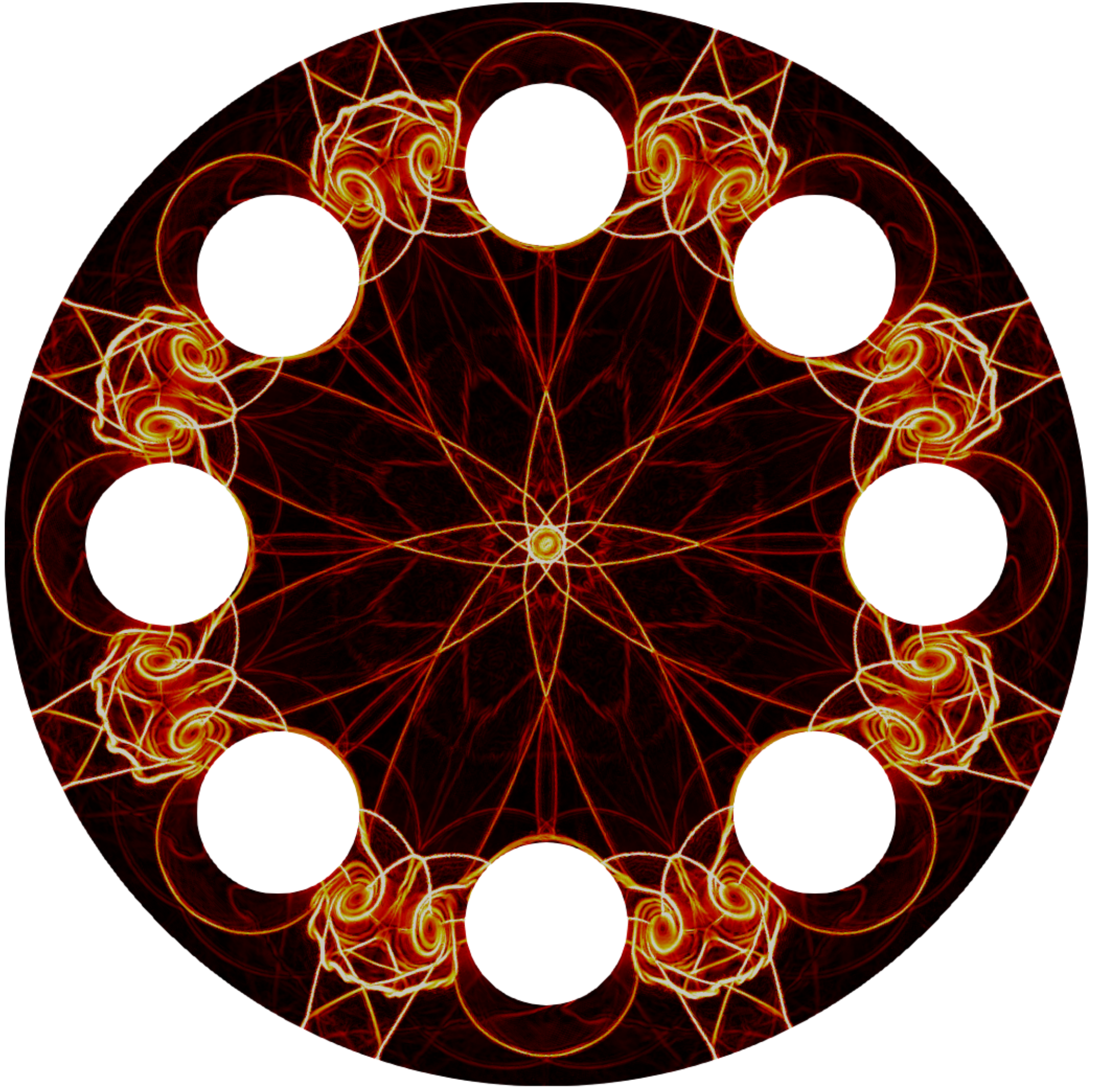}
    \caption{$t=1.8$}
  \end{subfigure}
  %\hspace{0.02\textwidth}
  \begin{subfigure}{0.32\textwidth}
    \centering
    \includegraphics[width=\linewidth]{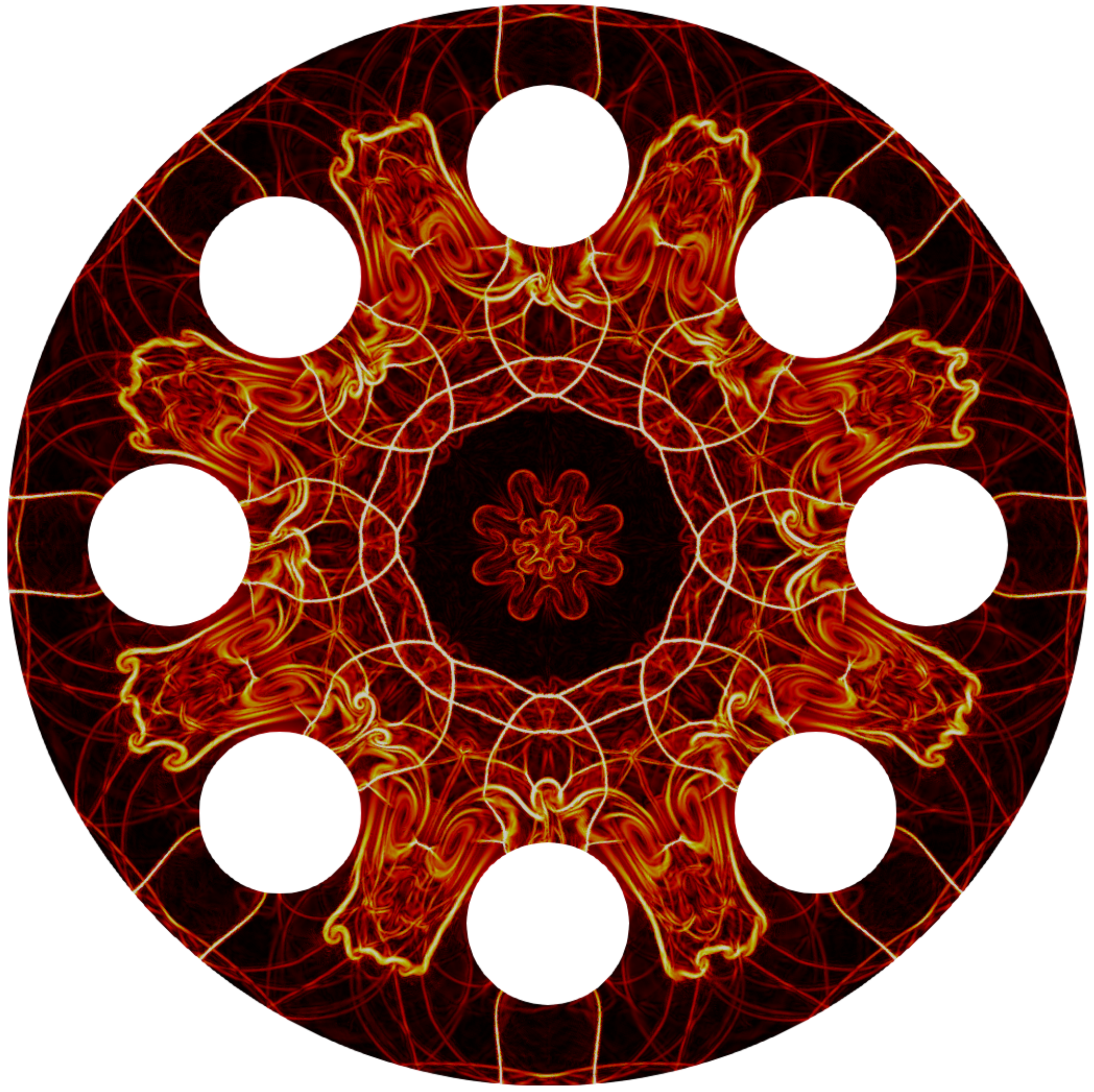}
    \caption{$t=2.4$}
  \end{subfigure}
  %\hspace{0.02\textwidth}
  \begin{subfigure}{0.32\textwidth}
    \centering
    \includegraphics[width=\linewidth]{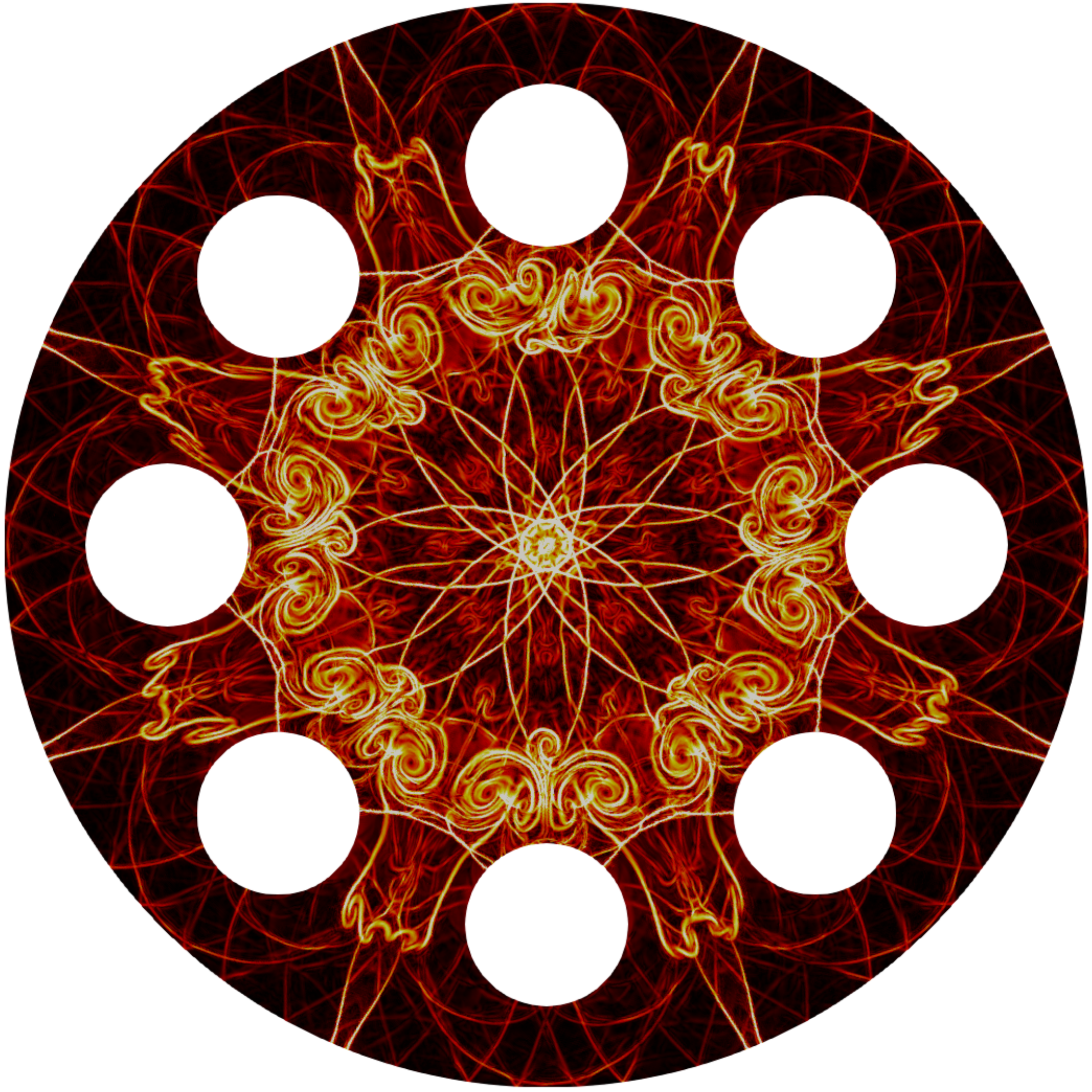}
    \caption{$t=2.85$}
  \end{subfigure}

  \vspace{0.5cm}  % 行间距
  
  % 第3行
  \begin{subfigure}{0.32\textwidth}
    \centering
    \includegraphics[width=\linewidth]{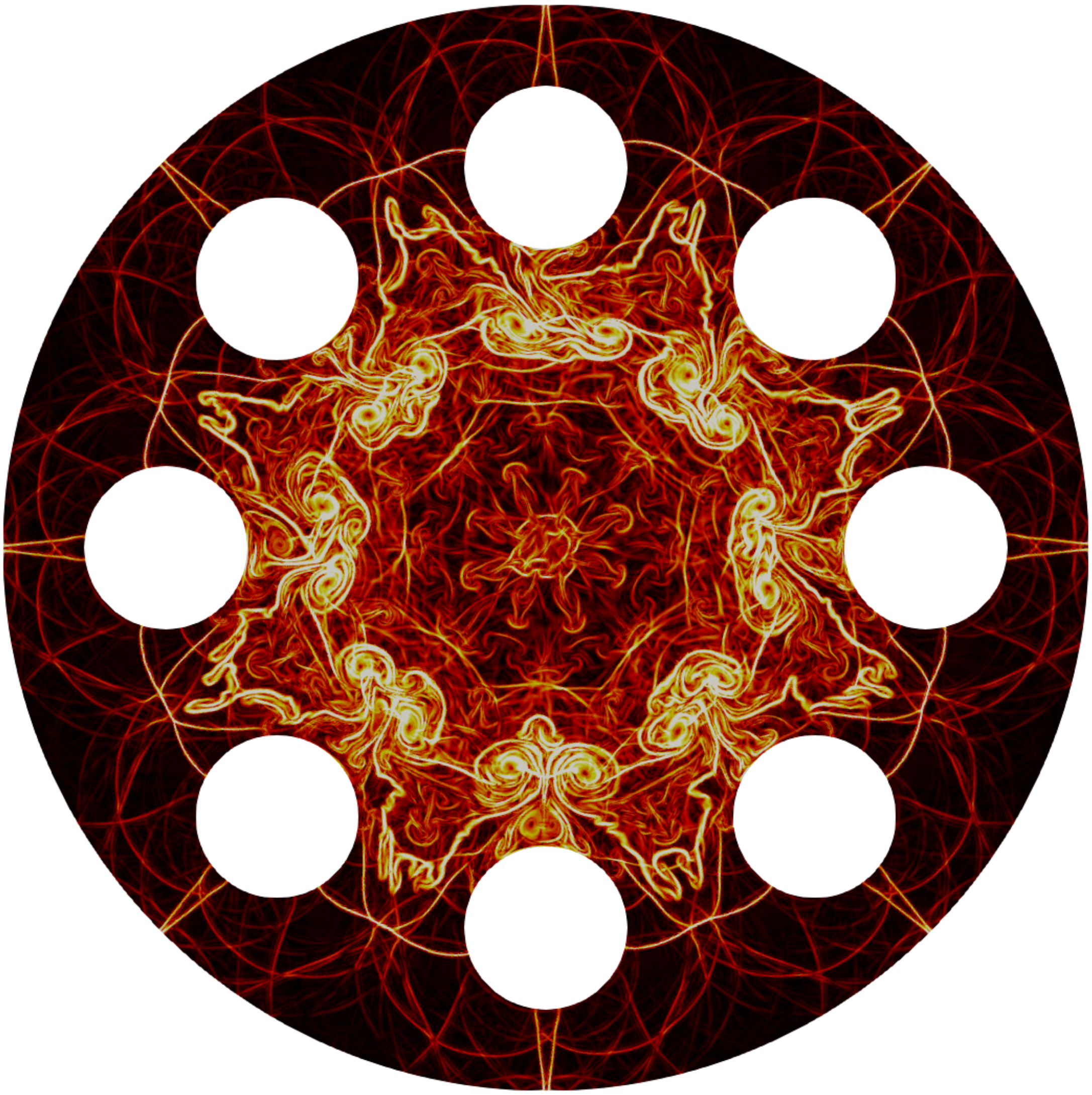}
    \caption{$t=3.45$}
  \end{subfigure}
  %\hspace{0.02\textwidth}
  \begin{subfigure}{0.32\textwidth}
    \centering
    \includegraphics[width=\linewidth]{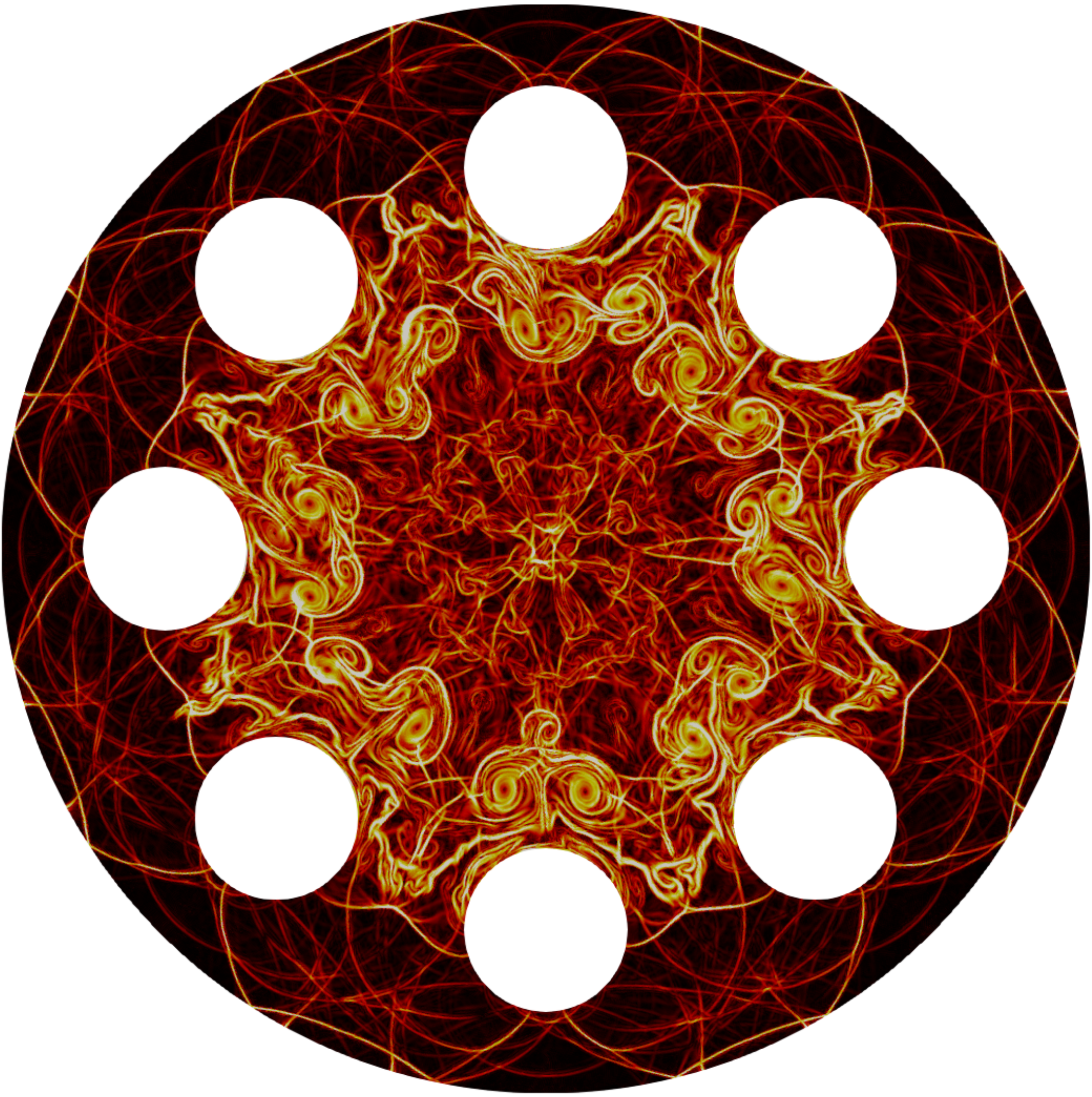}
    \caption{$t=3.9$}
  \end{subfigure}
  %\hspace{0.02\textwidth}
  \begin{subfigure}{0.32\textwidth}
    \centering
    \includegraphics[width=\linewidth]{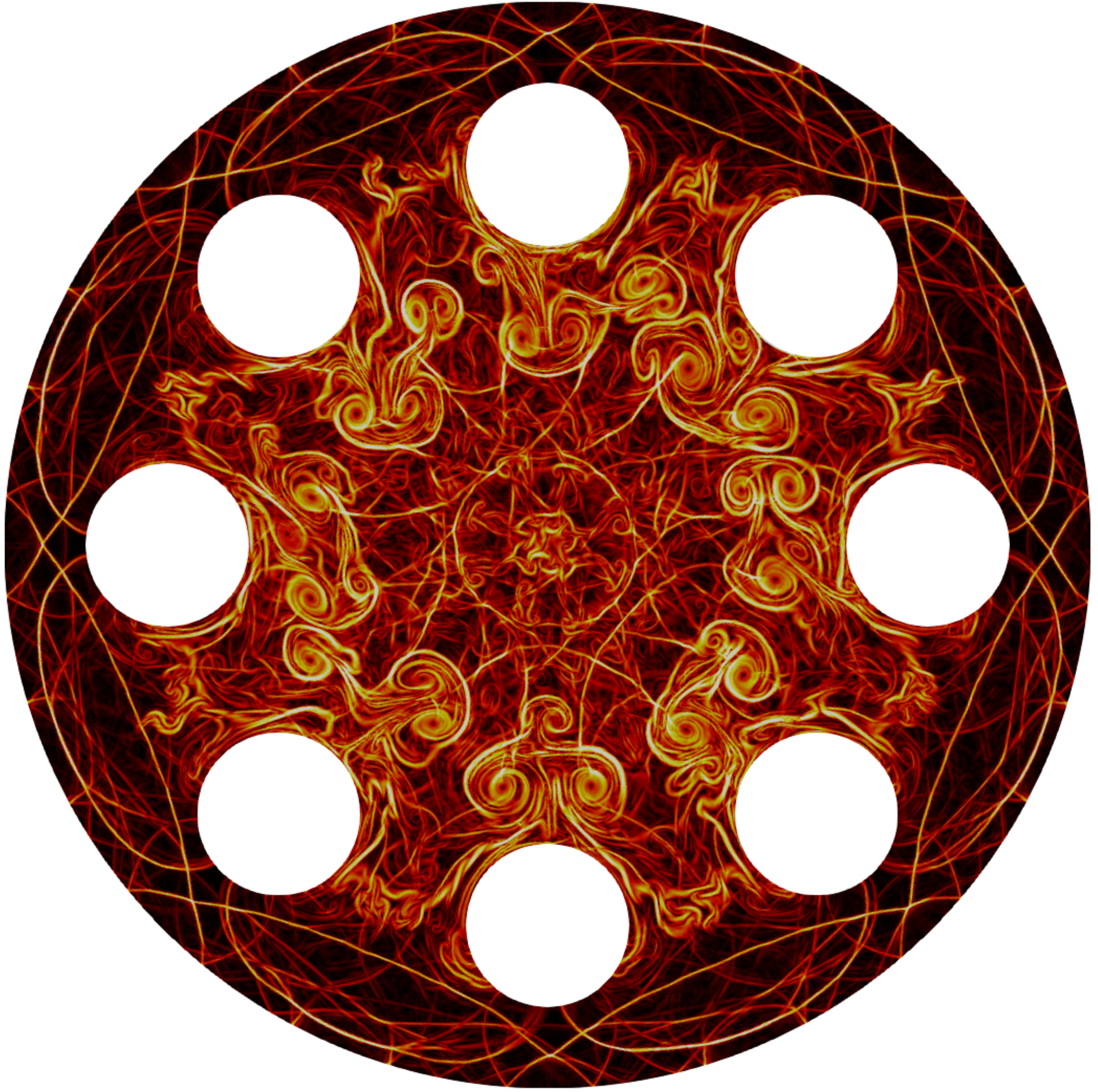}
    \caption{$t=4.2$}
  \end{subfigure}
  
  \caption{\textit{Explosion in a Domain with Multiple  Cylinders:} Schlieren diagram of the density at different time.}
  \label{fig:explosion}
\end{figure}
%%%%%%%%%%%%%%%%%%%%%%%%%%%%%%%%%%%%%%%%%%%%%%%%%%%%
%%%%%%%%%%%%%%%%%%%%%%%%%%%%%%%%%%%%%%%%%%%%%%%%%%%%
%%%%%%%%%%%%%%%%%%%%%%%%%%%%%%%%%%%%%%%%%%%%%%%%%%%%
%%%%%%%%%%%%%%%%%%%%%%%%%%%%%%%%%%%%%%%%%%%%%%%%%%%%
\section{Concluding Remarks}
\label{sec:conclude}
In this work, we have developed an entropy-stable high-order nodal discontinuous Galerkin method for the two-dimensional compressible Euler equations on general curvilinear meshes, equipped with an improved
oscillation-eliminating (OE) procedure.
The formulation is built upon an entropy-stable DG spectral element method
(DGSEM) that satisfies the summation-by-parts (SBP) property and discrete metric identities,  entropy stability at the semidiscrete level on curvilinear grids. 

To effectively control nonphysical oscillations near strong
discontinuities, we incorporated a modified oscillation-eliminating discontinuous Galerkin (OEDG) procedure into the entropy-stable DG framework.
Several improvements to the original OEDG method were introduced.
First, an OE scaling factor was incorporated to regulate the strength of
numerical damping, providing flexibility in balancing oscillation control
and numerical dissipation.
Second, we observed that the OE damping coefficients naturally serve as
effective shock indicators.
By localizing the OE procedure to troubled cells, the overall computational
cost was significantly reduced.
Third, the OE procedure was reformulated using projection operators,
allowing for a systematic extension to curvilinear meshes, where local
orthogonal modal bases are not readily available.

A comprehensive set of numerical experiments was presented to assess the accuracy, robustness, and effectiveness of the proposed method.
The isentropic Euler vortex problem, which admits an exact analytical solution, was computed on curvilinear meshes, demonstrating that the method retains high-order accuracy and preserves entropy stability in the presence
of geometric deformation.
Two-dimensional Riemann problems were used to verify the ability of the method to accurately capture shocks, contact discontinuities, and complex wave interactions, with results consistent with those reported in the
literature.
The double Mach reflection problem further confirmed the robustness of the scheme in resolving strong shock interactions and intricate flow structures
in the downstream region. We also compare the computation cost with different shock indicator threshold in double Mach reflection problem and found that the shock indicator can indeed improve the efficiency.

Particular emphasis was placed on challenging problems involving complex geometries.
In the simulation of supersonic flow around a two-dimensional circular cylinder, the proposed method successfully resolved a wide range of
flow features, including bow shocks, oblique shocks, contact discontinuities, fishtail shocks, and Kelvin--Helmholtz instabilities.
A detailed comparison of different OE scaling factors revealed the expected trade-off between numerical dissipation and resolution of fine-scale flow
features: larger scaling factors suppress small-scale instabilities, while smaller values preserve detailed structures at the expense of increased oscillations near strong shocks.
Finally, the explosion problem in a domain with multiple cylinders further demonstrated the capability of the method to robustly and accurately capture complex shock dynamics in highly nontrivial curvilinear geometries.

Overall, the numerical results indicate that the proposed entropy-stable DGSEM combined with the modified OEDG procedure provides a robust and accurate framework for simulating compressible flows with strong discontinuities on general curvilinear meshes.
The flexibility of the OE mechanism, together with entropy stability and high-order accuracy, makes the method a promising tool for challenging compressible flow applications involving complex geometries and multiscale flow phenomena. Future work will focus on extending the present entropy-stable OEDG framework to more complex systems, including the compressible Navier–Stokes equations and magnetohydrodynamics. Such extensions would further demonstrate the versatility of the proposed approach for challenging flow problems on general curvilinear meshes.

\section*{Appendix: Proof of Lemma \ref{lma:1}}
% \label{sec:appendix 1}
\begin{proof}
    For notational simplicity, the element superscript $(\cdot)^e$ is omitted
throughout this appendix.

The conservation property follows directly by summing the semidiscrete scheme
\eqref{eq_esdgsem} over all nodal indices $p_1$ and $p_2$.
Using the symmetry of the numerical fluxes and cancellation of interior
contributions across element interfaces, one obtains exact conservation of all
conserved variables \eqref{eq_cc}.

The proof of entropy stability \eqref{eq_ss} is more involved, particularly in the presence of
curvilinear coordinates.
We multiply the semidiscrete scheme \eqref{eq_esdgsem} by the entropy variables
$(V_k)_{p_1,p_2}$ and sum over all nodal indices and equation components.
After regrouping terms, this yields
\begin{equation}
\label{ecec}
    \begin{aligned}
0=&\sum_{p_1=0}^N\sum_{p_2=0}^N\sum_{k=1}^4 w_{p_1}w_{p_2}\mathcal{J}_{p_1,p_2}\,(\partial_tU_k)_{p_1,p_2}(V_k)_{p_1,p_2}\\
&+
2\sum_{p_1=0}^N\sum_{p_2=0}^N\sum_{i=0}^N\sum_{k=1}^4w_{p_1}w_{p_2} D_{p_1,i}\,(\tilde{f}_k^{\#})_{(i,p_1),p_2}(V_k)_{p_1,p_2}\\
&+
2\sum_{p_1=0}^N\sum_{p_2=0}^N\sum_{j=0}^N\sum_{k=1}^4w_{p_1}w_{p_2} D_{p_2,j}\,(\tilde{g}_k^{\#})_{p_1,(j,p_2)}(V_k)_{p_1,p_2}\\
&-\sum_{p_1=0}^N\sum_{k=1}^4 w_{p_1}\Big(
-(\Tilde{g}_k-\Tilde{g}_k^*)_{p_1,0}(V_k)_{p_1,0}
+(\Tilde{g}_k-\Tilde{g}_k^*)_{p_1,N}(V_k)_{p_1,N}\Big)\\
&
-\sum_{p_2=0}^N\sum_{k=1}^4
w_{p_2}\Big(
-(\Tilde{f}_k-\Tilde{f}_k^*)_{0,p_2}(V_k)_{0,p_2}
+(\Tilde{f}_k-\Tilde{f}_k^*)_{N,p_2}(V_k)_{N,p_2}\Big).
\end{aligned}
\end{equation}

For convenience, we decompose the above expression into four terms,
\begin{align*}
I = &\; \sum_{p_1=0}^N\sum_{p_2=0}^N\sum_{k=1}^4 w_{p_1}w_{p_2}\mathcal{J}_{p_1,p_2}\,(\partial_tU_k)_{p_1,p_2}(V_k)_{p_1,p_2},\\
II = &\; 
2\sum_{p_1=0}^N\sum_{p_2=0}^N\sum_{i=0}^N\sum_{k=1}^4w_{p_1}w_{p_2} D_{p_1,i}\,(\tilde{f}_k^{\#})_{(i,p_1),p_2}(V_k)_{p_1,p_2}\\
&\;-\sum_{p_2=0}^N\sum_{k=1}^4
w_{p_2}\Big(
-(\Tilde{f}_k)_{0,p_2}(V_k)_{0,p_2}
+(\Tilde{f}_k)_{N,p_2}(V_k)_{N,p_2}\Big),\\
III = &\; 
2\sum_{p_1=0}^N\sum_{p_2=0}^N\sum_{j=0}^N\sum_{k=1}^4w_{p_1}w_{p_2} D_{p_2,j}\,(\tilde{g}_k^{\#})_{p_1,(j,p_2)}(V_k)_{p_1,p_2}\\
&\;
-\sum_{p_1=0}^N\sum_{k=1}^4 w_{p_1}\Big(
-(\Tilde{g}_k)_{p_1,0}(V_k)_{p_1,0}
+(\Tilde{g}_k)_{p_1,N}(V_k)_{p_1,N}\Big),\\
IV = &\;
\sum_{p_1=0}^N\sum_{k=1}^4 w_{p_1}\Big(
-(\Tilde{g}_k^*)_{p_1,0}(V_k)_{p_1,0}
+(\Tilde{g}_k^*)_{p_1,N}(V_k)_{p_1,N}\Big)\\
&\;+\sum_{p_2=0}^N\sum_{k=1}^4
w_{p_2}\Big(
-(\Tilde{f}_k^*)_{0,p_2}(V_k)_{0,p_2}
+(\Tilde{f}_k^*)_{N,p_2}(V_k)_{N,p_2}\Big).
\end{align*}
With this notation, \eqref{ecec} can be compactly written as
\[
I + II + III + IV = 0.
\]

We now simplify each term in turn.
By definition of the entropy function $\eta(\bm U)$, we have
\[
\partial_t \eta
=
\sum_{k=1}^4 \frac{\partial \eta}{\partial U_k}\,\partial_t U_k
=
\sum_{k=1}^4 V_k\,\partial_t U_k.
\]
Consequently, the first term can be written as
\begin{align}
\label{cell-e}
I =
\sum_{p_1=0}^N\sum_{p_2=0}^N
w_{p_1}w_{p_2}\mathcal{J}_{p_1,p_2}\,
(\partial_t\eta)_{p_1,p_2},
\end{align}
which represents the discrete approximation of the time derivative of the
cell entropy $\int_{\Omega_e} \eta \, dx$ using the LGL quadrature rule.

Next, we analyze the second term $II$.
We fix the indices $k$ and $p_2$ and consider the summation over $p_1$ and $i$.
Define
\begin{align*}
II_{p_2,k}:= &\;
2\sum_{p_1=0}^N\sum_{i=0}^N w_{p_1}D_{p_1,i}\,
(\tilde{f}_k^{\#})_{(i,p_1),p_2}(V_k)_{p_1,p_2}
-\Big(
-(\Tilde{f}_k)_{0,p_2}(V_k)_{0,p_2}
+(\Tilde{f}_k)_{N,p_2}(V_k)_{N,p_2}
\Big).
\end{align*}

Using the symmetry of the entropy-conservative flux,
\[
(\tilde{f}_k^{\#})_{(i,p_1),p_2}
=
(\tilde{f}_k^{\#})_{(p_1,i),p_2},
\]
we obtain
\begin{align*}
II_{p_2,k}
=&\;
\sum_{p_1=0}^N\sum_{i=0}^N
w_{p_1}D_{p_1,i}\,
(\tilde{f}_k^{\#})_{(i,p_1),p_2}(V_k)_{p_1,p_2}
+
\sum_{p_1=0}^N\sum_{i=0}^N
w_iD_{i,p_1}\,
(\tilde{f}_k^{\#})_{(i,p_1),p_2}(V_k)_{i,p_2}\\
&\;
-\Big(
-(\Tilde{f}_k)_{0,p_2}(V_k)_{0,p_2}
+(\Tilde{f}_k)_{N,p_2}(V_k)_{N,p_2}
\Big)\\
=&\;
\sum_{p_1=0}^N\sum_{i=0}^N
w_{p_1}D_{p_1,i}\,
(\tilde{f}_k^{\#})_{(i,p_1),p_2}
\Big(
(V_k)_{p_1,p_2}-(V_k)_{i,p_2}
\Big),
\end{align*}
where in the last step we used the SBP property \eqref{eq_sbp_1} together with the
consistency of the numerical flux,
\[
(\tilde{f}_k^{\#})_{(j,j),p_2}
=
(\tilde{f}_k)_{j,p_2},
\qquad j=0,N,
\]
to cancel the boundary contributions.

Summing over all components $k=1,\dots,4$ and invoking the defining property of
the entropy-conservative fluxes \eqref{ecx}--\eqref{ec}, we obtain
\begin{align*}
\sum_{k=1}^4 II_{p_2,k}
=&\;
\sum_{p_1=0}^N\sum_{i=0}^N
w_{p_1}D_{p_1,i}
\Big(
\{\!\{y_{\eta}\}\!\}_{(i,p_1),p_2}
(\psi^f_{p_1,p_2}-\psi^f_{i,p_2})
-
\{\!\{x_{\eta}\}\!\}_{(i,p_1),p_2}
(\psi^g_{p_1,p_2}-\psi^g_{i,p_2})
\Big).
\end{align*}

Using again the symmetry of the averaged metric terms,
\[
\{\!\{y_{\eta}\}\!\}_{(i,p_1),p_2}
=
\{\!\{y_{\eta}\}\!\}_{(p_1,i),p_2},
\qquad
\{\!\{x_{\eta}\}\!\}_{(i,p_1),p_2}
=
\{\!\{x_{\eta}\}\!\}_{(p_1,i),p_2},
\]
we have 
\begin{align*}
\sum_{k=1}^4 II_{p_2,k}
   = &\; \sum_{p_1=0}^N\sum_{i=0}^N(w_{p_1}D_{p_1,i}-w_{i}D_{i,p_1})\Big(\{\!\{y_{\eta}\}\!\}_{(i,p_1),p_2}
\psi^f_{p_1,p_2}
-
\{\!\{x_{\eta}\}\!\}_{(i,p_1),p_2}\psi^g_{p_1,p_2}\Big)\\
   = &\; 2\sum_{p_1=0}^N\sum_{i=0}^Nw_{p_1}D_{p_1,i}\Big(\{\!\{y_{\eta}\}\!\}_{(i,p_1),p_2}
\psi^f_{p_1,p_2}
-
\{\!\{x_{\eta}\}\!\}_{(i,p_1),p_2}\psi^g_{p_1,p_2}\Big)\\
&\; - 
\underbrace{\Big((y_{\eta})_{N,p_2}
\psi^f_{N,p_2}
-(x_{\eta})_{N,p_2}\psi^g_{N,p_2}
\Big)}_{=\Tilde{\psi}^f_{N, p_2}}
+
\underbrace{\Big((y_{\eta})_{0,p_2}
\psi^f_{0,p_2}
-(x_{\eta})_{0,p_2}\psi^g_{0,p_2}
\Big)}_{=\Tilde{\psi}^f_{0, p_2}}\\
   = &\; \sum_{p_1=0}^N\sum_{i=0}^Nw_{p_1}D_{p_1,i}\Big((y_{\eta})_{i,p_2}
\psi^f_{p_1,p_2}
-
(x_{\eta})_{i,p_2}\psi^g_{p_1,p_2}\Big) - \Tilde{\psi}^f_{N, p_2} + \Tilde{\psi}^f_{0, p_2}
\end{align*}
where we used the SBP property \eqref{eq_sbp_1} in the second equality, and 
the summation identity \eqref{sum-1} in the last equality.
This implies that 
\begin{align}
    II = 
\sum_{p_2=0}^Nw_{p_2}  \sum_{k=1}^4 II_{p_2,k}
=&\; 
\sum_{p_2, p_1, i=0}^Nw_{p_2} w_{p_1}D_{p_1,i}\Big((y_{\eta})_{i,p_2}
\psi^f_{p_1,p_2}-
(x_{\eta})_{i,p_2}\psi^g_{p_1,p_2}\Big) \nonumber\\
&\;
+
\sum_{p_2=0}^Nw_{p_2}\Big(-\Tilde{\psi}^f_{N, p_2} + \Tilde{\psi}^f_{0, p_2}\Big).
\end{align}
Similar, we can simplify the third term to be 
\begin{align}
    III = &\;
\sum_{p_1=0}^N\sum_{p_2=0}^N\sum_{j=0}^Nw_{p_1} w_{p_2}D_{p_2,j}\Big(-(y_{\xi})_{p_1,j}
\psi^f_{p_1,p_2}+
(x_{\xi})_{p_1,j}\psi^g_{p_1,p_2}\Big) 
\nonumber\\
&\;+
\sum_{p_1=0}^Nw_{p_1}\Big(-\Tilde{\psi}^g_{p_1, N} + \Tilde{\psi}^g_{p_1, 0}\Big).
\end{align}
% where 
% \[
% \Tilde{\psi}^g_{p_1, j}
% := -(y_{\xi})_{p_1,j}
% \psi^f_{p_1,j}
% +
% (x_{\xi})_{p_1,j}\psi^g_{p_1,j}, \text{ for } j=0, N.
% \]
Next, using the metric identity \eqref{eq_metric_id_dis}, there holds 
\[
\sum_{p_1=0}^N\sum_{p_2=0}^N\sum_{j=0}^Nw_{p_1} w_{p_2}D_{p_2,j}(x_{\xi})_{p_1,j}
\psi^g_{p_1,p_2}
= 
\sum_{p_2=0}^N\sum_{p_1=0}^N\sum_{i=0}^Nw_{p_2} w_{p_1}D_{p_1,i}(x_{\eta})_{i,p_2}
\psi^g_{p_1,p_2}
\]
and 
\[
\sum_{p_1=0}^N\sum_{p_2=0}^N\sum_{j=0}^Nw_{p_1} w_{p_2}D_{p_2,j}(y_{\xi})_{p_1,j}
\psi^f_{p_1,p_2}
= 
\sum_{p_2=0}^N\sum_{p_1=0}^N\sum_{i=0}^Nw_{p_2} w_{p_1}D_{p_1,i}(y_{\eta})_{i,p_2}
\psi^f_{p_1,p_2}.
\]
Hence the volumn integration terms in $II+III$ cancels out, and we are left with boundary contributions:
\begin{align}
    \label{ii}
    II+III = 
    \sum_{p_2=0}^Nw_{p_2}\Big(-\Tilde{\psi}^f_{N, p_2} + \Tilde{\psi}^f_{0, p_2}\Big)+
\sum_{p_1=0}^Nw_{p_1}\Big(-\Tilde{\psi}^g_{p_1, N} + \Tilde{\psi}^g_{p_1, 0}\Big)
\end{align}
Finally, combining the above expression with  $IV$, we get 
\begin{align*}
    II+III+IV = &\;
\sum_{p_1=0}^N\sum_{k=1}^4 w_{p_1}\Big(
-(\Tilde{g}_k^*)_{p_1,0}(V_k)_{p_1,0}
+ \Tilde{\psi}^g_{p_1, 0}\Big) 
+ 
\sum_{p_1=0}^N\sum_{k=1}^4 w_{p_1}\Big(
(\Tilde{g}_k^*)_{p_1,N}(V_k)_{p_1,N}
- \Tilde{\psi}^g_{p_1, N}
\Big)\\
+\sum_{p_2=0}^N&\sum_{k=1}^4
w_{p_2}\Big(
-(\Tilde{f}_k^*)_{0,p_2}(V_k)_{0,p_2}
+\Tilde{\psi}^f_{0, p_2}\Big)\;+\sum_{p_2=0}^N\sum_{k=1}^4
w_{p_2}\Big(
(\Tilde{f}_k^*)_{N,p_2}(V_k)_{N,p_2}
-\Tilde{\psi}^f_{N, p_2}\Big)
% \\
% = &\; 
% \sum_{k=1}^4\left\langle (\Tilde{f}_k^{e,*},\Tilde{g}_k^{e,*})\cdot \hat{\bm n}, V_k \right\rangle_{\domgr,w}
% -
% \left\langle (\Tilde{\psi}^f, \Tilde{\psi}^g)\cdot\hat{\bm n}, 1 \right\rangle_{\domgr,w},
\end{align*}
Combining the above relation with \eqref{cell-e}, we conclude the proof of \eqref{eq_ss}.
\end{proof}

%%%%%%%%%%%%%%%%%%%%%%%%%%%%%%%%%%%%%%%%%%%%%%%%%%%%
%%%%%%%%%%%%%%%%%%%%%%%%%%%%%%%%%%%%%%%%%%%%%%%%%%%%
\section*{Acknowledgments}
Our numerical implementation is based on a DGSEM framework developed by Will Pazner within the MFEM library. We thank Will Pazner for kindly sharing his code with us and for valuable discussions related to the code design. 

\bibliographystyle{plain}
\bibliography{refrence}
\end{document}